\newcommand{\C}{\mathbb{C}}
\newcommand{\N}{\mathbb{N}}
\newcommand{\Z}{\mathbb{Z}}
\newcommand{\K}{\mathbb{K}}
\newcommand{\fsl}{\mathfrak{sl}}
\newcommand{\ad}{\mathrm{ad}}
\newcommand{\ext}{\mathrm{ext}}
\newcommand{\End}{\mathrm{End}}
\newcommand{\Ker}{\mathrm{Ker}\,}
\newcommand\GCD{\text{GCD}}
\newcommand{\id}{\mathrm{id}}
\newcommand{\pr}{\mathrm{pr}}
\newcommand{\DD}{\mathcal{D}}
\newcommand{\Span}{\text{Span}}
\newcommand{\Rad}{\text{Rad}}
\newcommand{\Der}{\text{Der}}
\newcommand{\lcP}{\overline{\cP}}
\newtheorem{thm}{Theorem}[section]
\newtheorem{prop}[thm]{Proposition}%[section]
\newtheorem{lemma}[thm]{Lemma}%[section]
\newtheorem{cor}[thm]{Corollary}%[section]
\newtheorem{defn}{Definition}[section]
\newtheorem{rem}[thm]{Remark}%[section]
\newcommand{\fg}{\mathfrak{g}}
\newcommand{\Hom}{{\mathrm {Hom}}}
\newcommand{\vep}{\varepsilon}
\newcommand{\bW}{\mathbf{W}}
\newcommand{\cA}{\mathcal{A}}
\newcommand{\cC}{\mathcal{C}}
\newcommand{\cD}{\mathcal{D}}
\newcommand{\cL}{\mathcal{L}}
\newcommand{\cM}{\mathcal{M}}
\newcommand{\cP}{\mathcal{P}}
\newcommand{\cS}{\mathcal{S}}
\newcommand{\cV}{\mathcal{V}}
\begin{document}
% \title[Simple cuspidal modules over $W_\pi$]{Classification of simple cuspidal modules \\ over a lattice Lie algebra $W_\pi$ of Witt type}
\title[Simple cuspidal modules over a lattice Lie algebra]{Classification of simple cuspidal modules \\ over a lattice Lie algebra of Witt type}
\author{Y. Billig and K. Iohara}
\address{School of Mathematics and Statistics, Carleton University, Ottawa, Canada}
\email{billig@math.carleton.ca}
\address{Universit\'{e} Lyon, Universit\'{e} Claude Bernard Lyon 1, CNRS UMR 5208, Institut Camille Jordan, 
43 Boulevard du 11 Novembre 1918, F-69622 Villeurbanne cedex, France}
\email{iohara@math.univ-lyon1.fr}

\begin{abstract} Let $W_\pi$ be the lattice Lie algebra of Witt type associated with an additive inclusion $\pi: \Z^N \hookrightarrow \C^2$ with $N>1$. In this article, the classification of simple $\Z^N$-graded $W_\pi$-modules, whose multiplicities are uniformly bounded, is given. 
\end{abstract}

\maketitle

 {}\let\thefootnote\relax\footnote{{\sl Keywords:}\, 
Lattice Lie algebras, $\cA \cV$-modules, cuspidal modules.  \\
   \indent   {\it 2010 MSC:}\,  Primary 17B10; Secondary 17B63, 17B65.}

\tableofcontents

\section*{Introduction}
In 1939, H.~Zassenhaus published a long influential article \cite{Z} where he launched basic
theory of Lie algebras over a field of positive characteristic. In particular, he showed that the Lie algebra $\Der(\K[\Z/p\Z])=(\K[T]/(T^p))\dfrac{d}{dT}$, where $\K$ is a field of characteristic $p>2$, is simple and its Killing form is trivial. In 1966, a central extension of this Lie algebra was found by R.~Block \cite{Bl}. Its characteristic $0$ version (without central extension), called the Witt algebra $\bW=\C[T^{\pm 1}]\dfrac{d}{dT}$, had been considered a long time ago by \'E.~Cartan, see, e.g., \cite{C}. 
Since its appearance in quantum field theory in the late 1960's, this Lie algebra had been studied extensively both in mathematics and in physics. 

One of the important results obtained for this Lie algebra is a classification of its 
Harish-Chandra modules. In 1992,
O.~Mathieu \cite{M} showed that a simple $\Z$-graded module over the Virasoro algebra, the central extension of the Witt algebra $\bW$,  whose weight multiplicities are finite, is either a highest/lowest weight module or a module of intermediate series, that is, a $\Z$-graded module whose weight multiplicities are bounded by $1$. The intermediate series for the Witt algebra $\bW$ was previously classified by I.~Kaplansky and 
L.~J.~Santharoubane \cite{KS}. 

The Witt algebra $\bW$ has a basis formed by $L_m=T^{m+1}\dfrac{d}{dT}$ ($m \in \Z$) and these elements satisfy the commutation relations:
\[   [L_m, L_n]=(n-m)L_{m+n}. \]
This Lie algebra has several generalizations; for example,
 the Lie algebra of algebraic vector fields on a torus $(\C^\ast)^N$ for some $N > 1$ (cf. \cite{BF2}). For any lattice $\Lambda$ of finite rank and an additive injective map $l: \Lambda \rightarrow \C$, H. Zassenhaus \cite{Z} (see pages 47-48)
considered the Lie algebra $W_l$ with basis $(L_\lambda)_{\lambda \in \Lambda}$ satisfying
\[ [L_\lambda, L_\mu]=l(\mu-\lambda)L_{\lambda+\mu}. \]
Such a Lie algebra is called the generalized Witt algebra or solenoidal Lie algebra (cf. \cite{BF2}).  

Y. Billig and V. Futorny classified in \cite{BF1} simple $\Z^N$-graded modules for the Lie algebra of vector fields on an $N$-dimensional torus.
Just as in case of the Witt algebra $\bW$, there are two classes of modules: generalized highest weight modules and modules with bounded multiplicities.
Let us briefly explain a new method developed in \cite{BF1} for the classification of the modules with bounded multiplicities, also known as the cuspidal 
modules. Let $\cV$ be either a Lie algebra of vector fields on a torus or a solenoidal Lie algebra.
%The classifications of $\Z^N$-graded representations over these Lie algebras whose multiplicities are uniformly bounded have been achieved by Y. Billig and V. Futorny in \cite{BF1} that can be briefly explained as follows. 
A key feature of these Lie algebras is that they are free modules over the ring $\cA=\C[t_1^{\pm 1}, \cdots, t_N^{\pm 1}]$. Based on this fact, one first classifies irreducible $\cA\cV$-modules, that is $\cV$-modules on which one can define a compatible $\cA$-action. 
For a $\cV$-module $M$, one defines its cover $\widehat{M}$ in the category of $\cA\cV$-modules as a certain submodule of the coinduced module
$\Hom_\C (\cA, M)$. The key step is to prove that a cover of a cuspidal module is again cuspidal.
Finally it is shown that any simple $\Z^N$-graded $\cV$-module with uniformly bounded multiplicities can be realized as a quotient of the restriction to $\cV$ of an irreducible cuspidal $\cA\cV$-module. 

In 2013, K. Iohara and O. Mathieu \cite{IM1} classified lattice Lie algebras, that is $\Z^N$-graded simple Lie algebras whose multiplicities are $1$. Among such Lie algebras, there is a class defined below, which is another generalization of the Witt algebra.
% further generalization of the above Lie algebra defined as follows.

Let $\langle \cdot, \cdot \rangle$ be the standard symplectic form on $\C^2$. The Poisson algebra $\cS$ of the principal symbols of pseudo-differential operators in one variable is the vector space $\cS=\bigoplus_{\lambda \in \C^2} \C L_\lambda$ equipped with the commutative algebra structure 
$$L_\lambda \cdot L_\mu=L_{\lambda+\mu+\rho},$$ 
where $\rho=(1,1)$ is a fixed vector, with the Poisson bracket $\{ \cdot, \cdot \}$ given by
\[ \{ L_\lambda, L_\mu\}=\langle \lambda+\rho, \mu+\rho \rangle L_{\lambda+\mu}. \]
For any additive injective map $\pi: \Lambda \hookrightarrow \C^2$, the Lie algebra $W_\pi$ is the Lie subalgebra of $\cS$ generated by $(L_\lambda)_{\lambda \in \pi(\Lambda)}$.The intermediate series for $W_\pi$ has been classified by the same authors \cite{IM2}.

Throughout this paper we assume the following technical condition:
$$(\cC) \hbox{\hskip 1cm}
\langle \alpha+2\rho, \beta \rangle \neq 0 \text{ \ for any \ } \alpha \in \pi(\Lambda) \text{ \ and \ } \beta \in \pi(\Lambda) \setminus \{0\}. 
\hbox{\hskip 1cm}
$$

Since $W_\pi$ has $\cA_\pi$-module structure, where $\cA_\pi=\bigoplus_{\lambda \in \pi(\Lambda)} \C L_{\lambda-\rho}$,  and the commutative ring $\cA_\pi$ has $W_\pi$-module structure,  we can apply the machinery of $\cA\cV$-modules with $\cA = \cA_\pi$ and $\cV = W_\pi$.

In this article, we classify simple $\Lambda$-graded $W_\pi$-modules $M=\bigoplus_{\alpha \in \Lambda} M_\alpha$ with a uniform bound on $\dim M_\alpha$, following the approach developed in \cite{BF1}. %Namely, these are graded $W_\pi$-modules $M=\bigoplus_{\alpha \in \Lambda} M_\alpha$ with a uniform bound on $\dim M_\alpha$.
% that satisfy i) $L_\lambda. M_\alpha \subset M_{\alpha+\lambda}$ ($\alpha, \lambda \in \Lambda$) and ii) $\dim M_\alpha \leq C$ ($\forall\, \alpha$) for some constant $C$. 
Our main result states that such a simple $W_\pi$-module is either 
a quotient of
$\cS_{\Gamma}:=\bigoplus_{\mu \in \Gamma} \C L_\mu$, where 
$\Gamma = \beta + \pi(\Lambda)$ for some $\beta \in \C^2$ or of the form $\cS_{\Gamma} \otimes \C^n$ with $n \geq 3$ on which a non-trivial action of $W_\pi$ is defined in section \ref{sect_mult+1}.

%can be expressed either as an irreducible subquotient of the modules of the form $\cS_{A}:=\bigoplus_{\mu \in \pi(\Lambda)+\alpha} \C L_\mu$ for some $A \in \C^2/\pi(\Lambda)$ or $\cS_{A} \otimes \C^n$ with $n \geq 3$ where a non-trivial action of $W_\pi$ is defined (cf. \S \ref{sect_mult+1}). 

This article is organized as follows. In Section 1, we define our basic objects, the Lattice Lie algebra $W_\pi$ of Witt type, some examples of graded $W_\pi$-modules, and state the main result of this article. In Section 2, we discuss a connection between  $W_\pi$-modules and $\cA\cV_\pi$-modules.
We classify simple cuspidal $\cA\cV_\pi$-modules in Section 3. In Section 4,  we prove our main theorem classifying simple cuspidal $W_\pi$-modules.

%\begin{ack}
\noindent \textbf{Acknowledgement.}\,
\textit{
The first author gratefully acknowledges support from the Natural Sciences and Engineering Research Council of Canada. The second author is partially supported by the French ANR (ANR project ANR-15-CE40-0012). He would like to also thank the Embassy of France in Canada for the mobility grant that allowed him to visit
Carleton University. This work was partially supported by the LABEX MILYON (ANR-10-LABX-0070) of Universit\'{e} de Lyon. 
%, within the program "Investissements d'Avenir" (ANR-11-IDEX- 0007) 
%operated by the French National Research Agency (ANR).
We would also like to thank the University of Lyon and Carleton University for the hospitality 
during our respective visits.}
%\end{ack}

\section{Lie algebras of Witt type}
In this section, we introduce the Poisson algebra $\cS$ of the principal symbols of pseudo-differential operators on $\C$. As a Lie subalgebra, we introduce a lattice Lie algebra of Witt type $W_\pi$ depending on the additive embedding $\pi: \Lambda \hookrightarrow \C^2$, where $\Lambda=\Z^N$ with $N>1$.   

\subsection{Poisson algebra $\cS$}\label{sect_Poisson-P}

The Poisson algebra $\cS$ of the symbols of twisted pseudo-differential operators  on a circle is the  algebra  with a basis $(L_\lambda)_{\lambda \in \C^2}$ with products given by
\begin{align*}
L_\lambda \cdot L_\mu=
&L_{\lambda+\mu+\rho}, \\
\{L_\lambda, L_\mu\}=
&\langle \lambda+\rho, \mu+\rho \rangle L_{\lambda+\mu}, 
\end{align*}
where $\langle \cdot, \cdot\rangle$ is the standard symplectic structure of $\C^2$ and  $\rho=(1,1)$. Indeed for $\lambda=(a,b)$,
$L_\lambda$ is the symbol of the twisted pseudo-differential operator $z^{a+1} (d/dz)^{b+1}$, see \cite{IM1} for details.

The Poisson algebra $\cS$ viewed as a commutative algebra is also denoted by $\cA$. 

For an element $\xi \in \C^2 \setminus \C \rho$, the subspace $\bigoplus_{m \in \Z} \C L_{m\xi}$ is stable under the Poisson bracket and is isomorphic to the Witt algebra, the $\Z$-graded Lie algebra $\bW$ with basis $(L_m)_{m\in\Z}$ and bracket given by $[L_m,L_n]=(n-m) L_{n+m}$. This generalizes to a $\Lambda$-graded algebra as follows.

Given an injective additive map $\pi:\Lambda \hookrightarrow \C^2$, let $W_\pi \subset \cS$ be the Lie subalgebra with basis $(L_\lambda)_{\lambda \in \pi(\Lambda)}$. In what follows, we will always assume:
\begin{enumerate}
\item[i)] $\mathrm{Im}~\pi \not\subset \C \rho$, 
\item[ii)] $2\rho \not\in \mathrm{Im}~\pi$,
\item[iii)] $\pi(\Lambda)$ does not lie in a complex line.
\end{enumerate}
The condition i) is necessary since otherwise $W_\pi$ is commutative. 
It is easy to show that $W_\pi$ is simple if and only if the condition ii) holds (cf. Lemma 49 of \cite{IM1}).
The condition iii) implies that we exclude the case when $W_\pi$ becomes a so-called 
%generalized Witt algebra or 
solenoidal Lie algebra (see, e.g., \cite{BF2}). 
Besides these conditions, we assume
$$(\cC) \hbox{\hskip 1cm}
\langle \alpha+2\rho, \beta \rangle \neq 0 \text{ \ for any \ } \alpha \in \pi(\Lambda) \text{ \ and \ } \beta \in \pi(\Lambda) \setminus \{0\}. 
\hbox{\hskip 1cm}
$$

We will indicate clearly, whenever we need this condition. 
\begin{rem} 
It seems that all results of this paper may be established under a weaker assumption
of the existence of
a $\Z$-basis $\{\vep_k\}_{1\leq k\leq N}$ of $\Lambda$ such that $\langle \alpha+2\rho, \vep_k \rangle\neq 0$ for all $1\leq k\leq N$ and $\alpha \in \pi(\Lambda)$. 
\end{rem}

Let $\cA_\pi:=\bigoplus_{\lambda \in \pi(\Lambda)} \C L_{\lambda-\rho}$ be a commutative subalgebra of $\cA$. It is clear that, since $\pi$ is injective by assumption, $\cA_\pi$ is isomorphic to $\C[t_1^{\pm 1}, \cdots, t_N^{\pm 1}]$.
The following Lemma follows immediately from the definitions:
\begin{lemma}\label{lemma_elementary-P}
\begin{enumerate}
\item The Lie algebra $W_\pi$ has an $\cA_\pi$-module structure.
\item The commutative algebra $\cA_\pi$ has a $W_\pi$-module structure.
\end{enumerate}
\end{lemma}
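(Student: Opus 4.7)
The plan is to realize both $W_\pi$ and $\cA_\pi$ as graded subspaces of the Poisson algebra $\cS$ (indexed by $\pi(\Lambda)\subset \C^2$) and check that the ambient commutative product and Poisson bracket on $\cS$ restrict to maps on the appropriate pairs of subspaces. Since $\cS$ is already a commutative associative algebra (respectively, a Lie algebra under $\{\cdot,\cdot\}$), the required axioms of a module structure are then inherited from $\cS$ for free; the only thing to verify is that the output of the structure map lands in the prescribed subspace.

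For (1), I would define the action of $\cA_\pi$ on $W_\pi$ by restricting the commutative product of $\cS$. For $L_{\lambda-\rho}\in \cA_\pi$ with $\lambda\in\pi(\Lambda)$ and $L_\mu\in W_\pi$ with $\mu\in\pi(\Lambda)$, the product rule of $\cS$ gives $L_{\lambda-\rho}\cdot L_\mu = L_{(\lambda-\rho)+\mu+\rho}=L_{\lambda+\mu}$, which belongs to $W_\pi$ because $\pi(\Lambda)$ is closed under addition. Associativity follows from associativity in $\cS$, and the unit $L_{-\rho}$ (which lies in $\cA_\pi$ when $0\in \pi(\Lambda)$, as it does) acts as the identity.

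For (2), I would define the action of $W_\pi$ on $\cA_\pi$ by restricting the Poisson bracket. For $L_\lambda \in W_\pi$ and $L_{\mu-\rho}\in\cA_\pi$ with $\lambda,\mu\in\pi(\Lambda)$, the bracket formula of $\cS$ yields $\{L_\lambda,L_{\mu-\rho}\} = \langle \lambda+\rho,\mu\rangle L_{\lambda+\mu-\rho}$, again an element of $\cA_\pi$ by closure of $\pi(\Lambda)$ under addition. The Jacobi identity for this action comes directly from the Jacobi identity of the Poisson bracket on $\cS$.

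There is no real obstacle here; this is purely a bookkeeping check built into the definitions of $\cS$, $W_\pi$, and $\cA_\pi$. The only mildly delicate point worth flagging is that the two structures are compatible in the Poisson-algebra sense (the $W_\pi$-action on $\cA_\pi$ acts by derivations of the commutative product), a fact that will be used later when the machinery of $\cA\cV$-modules with $\cA=\cA_\pi$ and $\cV=W_\pi$ is invoked.
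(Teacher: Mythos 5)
Your proof is correct and matches the paper's approach; the paper simply states that the lemma "follows immediately from the definitions," and your write-up spells out exactly that routine verification via the formulas $L_{\lambda-\rho}\cdot L_\mu = L_{\lambda+\mu}$ and $\{L_\lambda, L_{\mu-\rho}\} = \langle\lambda+\rho,\mu\rangle L_{\lambda+\mu-\rho}$.
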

Notice that $L_{-\rho}$ is the identity element of $\cA_\pi$ and $L_{\lambda-\rho} \cdot L_{-\lambda-\rho}=L_{-\rho}$ for any $\lambda \in \C^2$.

%\begin{center}
%\begin{Huge}
%{\color{blue}Few words about $\Lambda$-graded module.}
%\end{Huge}
%\end{center}
\subsection{Definition of $\cA \cV_\pi$-modules}
% An  {\color{violet}$\cA \cV_\pi$}-module we consider is a $\pi(\Lambda)$-graded  $(\cA_\pi, W_\pi)$-module $T$ in the following sense:
% \begin{enumerate}
% \item $T=\bigoplus_{\alpha \in \alpha_0+\pi(\Lambda)} T_\alpha$ with $\dim T_\alpha <\infty$,
% \item 
% $\theta^\cV(L_\lambda)(T_\alpha) \subset T_{\alpha+\lambda}$\quad  and \quad 
% $\theta^{\cA}(L_{\lambda-\rho})(T_\alpha) \subset T_{\alpha+\lambda},$ \\
% for $\lambda \in \pi(\Lambda)$ and
% \item 
% $\theta^\cV(L_\lambda)\theta^{\cA}(L_{\mu-\rho})=\theta^{\cA}(\{L_\lambda, L_{\mu-\rho}\})+\theta^{\cA}(L_{\mu-\rho})\theta^\cV(L_\lambda),$ \\
% for $\lambda, \mu \in \pi(\Lambda)$, 
% \end{enumerate}
% where $\alpha_0 \in \C^2$ and $\theta^\cV: W_\pi \rightarrow \End(T)$ and $\theta^{\cA}: \cA_\pi \rightarrow \End(T)$ denote the module structures. To simplify the notations, we may also write $L_\lambda^\cV$ in place of $\theta^\cV(L_\lambda)$ and $L_{\lambda-\rho}^{\cA}$ for $\theta^{\cA}(L_{\lambda-\rho})$.
% Notice that the second condition with respect to $\theta^{\cA}$ implies that $\dim T_\alpha$ does not depend on $\alpha$, since $\theta^{\cA}(L_{\lambda-\rho})$ is invertible for any $\lambda$. Hence, we may write 
% $T=\bigoplus T_\alpha=\cA_\pi \otimes T_{\alpha_0}.$
An  $\cA \cV_\pi$-module $T$ is a vector space which is simultaneously a module for the unital commutative associative algebra $\cA_\pi$
and a module over the Lie algebra $W_\pi$ with the two actions being compatible:
\begin{equation*}
\theta^\cV(L_\lambda)\theta^{\cA}(L_{\mu-\rho})=\theta^{\cA}(\{L_\lambda, L_{\mu-\rho}\})+\theta^{\cA}(L_{\mu-\rho})\theta^\cV(L_\lambda),
\text{ for } \lambda, \mu \in \pi(\Lambda),
\end{equation*}
where $\theta^\cV: W_\pi \rightarrow \End(T)$ and $\theta^{\cA}: \cA_\pi \rightarrow \End(T)$ denote the module structures.
To simplify the notations, we may also write $L_\lambda^\cV$ in place of $\theta^\cV(L_\lambda)$ and $L_{\lambda-\rho}^{\cA}$ for $\theta^{\cA}(L_{\lambda-\rho})$.

Here is a basic example.  As the Poisson algebra $\cS$ has a $\C^2$-graded structure, for each coset $\Gamma \in \C^2/\pi(\Lambda)$, the subspace of $\cS$ defined by $\cS_\Gamma=\bigoplus_{\mu \in \Gamma} \C L_\mu$ is stable under the action of $W_\pi$. Notice that this $W_\pi$-module also has the structure of an $\cA_\pi$-module. 
The $W_\pi$-module structure of $\cS_\Gamma$ is described by the following:
\begin{lemma}[cf. \cite{IM2}]\label{lemma_irr-tensor}
Let $\Gamma \in \C^2/\pi(\Lambda)$. 
\begin{enumerate}
\item The $W_\pi$-module $\cS_\Gamma$ is irreducible iff $-\rho, -2\rho \not\in \Gamma$.
\item If $-\rho \in \Gamma$, then there is an inclusion of the trivial 1-dimensional $W_\pi$-module $\C L_{-\rho} \hookrightarrow \cS_\Gamma$. 
Set $\overline{M}=\cS_{-\rho + \pi(\Lambda)}/\C L_{-\rho}$.
\item If $-2\rho \in \Gamma$, then there is an inclusion of the $W_\pi$-module $\overline{M}^\vee \hookrightarrow \cS_\Gamma$ and its cokernel is a trivial representation. 
\end{enumerate}
Here, $\overline{M}^\vee$ signifies the restricted dual of $\overline{M}$.
\end{lemma}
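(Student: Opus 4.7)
The three parts can be taken in reverse order.

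Part (2) is immediate from the bracket formula: if $-\rho\in\Gamma$, then for every $\lambda\in\pi(\Lambda)$ one has $\{L_\lambda,L_{-\rho}\}=\langle\lambda+\rho,0\rangle L_{\lambda-\rho}=0$, so $\C L_{-\rho}\subset\cS_\Gamma$ carries the trivial action. For part (3), when $-2\rho\in\Gamma$ (so $\Gamma=-2\rho+\pi(\Lambda)$), the plan is to consider
\[
N:=\bigoplus_{\alpha\in\pi(\Lambda)\setminus\{0\}}\C L_{-2\rho+\alpha}.
\]
This is $W_\pi$-stable because the only way to land on the missing basis vector $L_{-2\rho}$ is via $\{L_{-\alpha},L_{-2\rho+\alpha}\}=\langle-\alpha+\rho,\alpha-\rho\rangle L_{-2\rho}$, which vanishes since its two arguments are collinear. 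The quotient $\cS_\Gamma/N\cong\C L_{-2\rho}$ inherits the trivial action since $\{L_\beta,L_{-2\rho}\}=\langle\beta+\rho,-\rho\rangle L_{\beta-2\rho}\in N$. To identify $N$ with $\overline M{}^\vee$, I would set $\varphi(f_\alpha):=L_{-2\rho-\alpha}$ for $\alpha\in\pi(\Lambda)\setminus\{0\}$, where $f_\alpha$ is the element of $\overline M{}^\vee$ dual to the image of $L_{-\rho+\alpha}$; checking $W_\pi$-equivariance reduces to the symplectic identity $\langle\lambda+\rho,\alpha-\lambda\rangle=\langle\lambda+\rho,\alpha+\rho\rangle$, both sides equaling $\langle\lambda+\rho,\alpha\rangle-\langle\rho,\lambda\rangle$.

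Part (1) is the substantive one, and my plan has two stages. First, I would observe that any $W_\pi$-submodule of $\cS_\Gamma$ is automatically $\pi(\Lambda)$-graded. Since $0\in\pi(\Lambda)$, the element $L_0\in W_\pi$ acts diagonally with $L_0\cdot L_\mu=\langle\rho,\mu\rangle L_\mu$. Applying condition $(\cC)$ with $\alpha=0$ gives $\pi(\Lambda)\cap\C\rho=\{0\}$, so distinct elements of $\Gamma$ have distinct $L_0$-eigenvalues, and every invariant subspace is a direct sum of the one-dimensional weight spaces $\C L_\mu$.

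Second, I would prove graded simplicity: fix $\mu\in S$, where $\bigoplus_{\mu\in S}\C L_\mu$ is a nonzero graded submodule, and let $\mu'\in\Gamma$ be arbitrary. Setting $\nu:=\mu'-\mu\in\pi(\Lambda)$, a single-step action $L_\nu\cdot L_\mu$ reaches $\mu'$ whenever $\langle\nu+\rho,\mu+\rho\rangle\neq 0$. Otherwise I would use a detour $\mu\to\mu+\lambda\to\mu'$ for some $\lambda\in\pi(\Lambda)$, exploiting the identity
\[
\langle(\nu-\lambda)+\rho,(\mu+\lambda)+\rho\rangle=\langle\nu+\rho,\mu+\rho\rangle+\langle\mu'+2\rho,\lambda\rangle,
\]
which reduces the two-step requirement to $\lambda+\rho\notin\C(\mu+\rho)$ and $\lambda\notin\C(\mu'+2\rho)$; both obstructions are well-defined since the assumption $-\rho,-2\rho\notin\Gamma$ ensures $\mu+\rho\neq 0$ and $\mu'+2\rho\neq 0$. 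The main obstacle is then showing that $\lambda$ can be chosen in $\pi(\Lambda)$ avoiding the union of one affine line and one linear line in $\C^2$. This is handled by a short argument using that $\pi(\Lambda)$ is a subgroup: if it were contained in such a union, closure under addition would force everything into a single line, contradicting hypothesis (iii). With $\lambda$ in hand, both steps succeed and $\mu'\in S$, completing the proof of irreducibility.
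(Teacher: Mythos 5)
The paper does not actually give a proof of this lemma: it is stated with the citation ``cf.\ \cite{IM2}'' to a reference listed as ``in preparation,'' so there is no in-text argument to compare your attempt against. Judged on its own merits, your proof is correct. A few remarks on the details. In part (3), the subspace $N$ spanned by the $L_{-2\rho+\alpha}$ with $\alpha\in\pi(\Lambda)\setminus\{0\}$ is stable exactly for the reason you give ($\langle -\alpha+\rho,\alpha-\rho\rangle=0$ since the arguments are collinear), the quotient by $N$ is trivial, and the assignment $f_\alpha\mapsto L_{-2\rho-\alpha}$ is precisely the identification of $N$ with the annihilator of $\C L_{-\rho}$ inside $\cS_{-\rho+\pi(\Lambda)}^\vee\cong\cS_{-2\rho+\pi(\Lambda)}$; your symplectic identity $\langle\lambda+\rho,\alpha-\lambda\rangle=\langle\lambda+\rho,\alpha+\rho\rangle$ is the correct intertwining computation, and the edge case $\alpha=\lambda$ is handled because both sides are then zero. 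In part (1), condition $(\cC)$ with $\alpha=0$ does give injectivity of $\langle\rho,\cdot\rangle$ on $\pi(\Lambda)$ (this is also recorded in the paper), so any submodule is graded; the identity $\langle(\nu-\lambda)+\rho,(\mu+\lambda)+\rho\rangle=\langle\nu+\rho,\mu+\rho\rangle+\langle\mu'+2\rho,\lambda\rangle$ checks out; and the subgroup argument ruling out $\pi(\Lambda)\subseteq\C(\mu'+2\rho)\cup\bigl(-\rho+\C(\mu+\rho)\bigr)$ does go through (one can argue case by case on whether the affine line passes through the origin), reducing ultimately to a contradiction with assumption (iii). It may be worth noting explicitly that the degenerate choices $\lambda=0$ and $\lambda=\nu$ are automatically excluded by the two avoidance conditions, since $0\in\C(\mu'+2\rho)$ always, and $\nu+\rho\in\C(\mu+\rho)$ exactly when the single step fails. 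Finally, you should state the ``only if'' direction of (1) explicitly: parts (2) and (3) produce proper nonzero submodules because $\pi(\Lambda)$ is infinite, so $\dim\cS_\Gamma>1$.
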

Notice that, in general, the restricted dual of the $W_\pi$-module $\cS_{\beta+\pi(\Lambda)}$ is isomorphic to $\cS_{-\beta-3\rho+\pi(\Lambda)}$. By the condition iii) on the additive map $\pi: \Lambda \hookrightarrow \C^2$, the $W_\pi$-modules $\overline{M}$ and $\overline{M}^\vee$ are not isomorphic.
\subsection{$\Lambda$-graded $W_\pi$-modules}
A $W_\pi$-module $M$ is said to be \textbf{$\Lambda$-graded} if there exists a coset $\Gamma \in \C^2/\pi(\Lambda)$ such that 
$$M=\bigoplus_{\mu \in \Gamma} M_\mu,$$ 
where 
$$M_\mu = \left\{ m \in M \, | \, L_0.m = \langle \rho, \mu \rangle m \right\}.$$
It is easy to check that  
$L_\lambda.M_\mu \subset M_{\lambda+\mu}$ for any $\lambda\in \pi(\Lambda)$ and $\mu \in \Gamma$.

Note that the condition $(\cC)$ implies that the additive map $\langle \rho, \cdot \rangle: \Lambda \rightarrow \C, \ \alpha \mapsto \langle \rho, \alpha \rangle$, is injective.

Similarly, an $\cA\cV_\pi$-module is  \textbf{$\Lambda$-graded} if it is $\Lambda$-graded as a $W_\pi$-module. 
For such modules we also have
$\theta^{\cA}(L_{\lambda-\rho})(M_\alpha) \subset M_{\alpha+\lambda}$
for $\lambda \in \pi(\Lambda)$.

% $T=\bigoplus_{\mu \in \Gamma} T_\mu$ for some $\Gamma \in \C^2/\pi(\Lambda)$ such that 
%
% \centerline{
% $L_{\lambda-\rho}^{\cA}.T_\mu \subset T_{\lambda+\mu}$ for any $\lambda \in % \pi(\Lambda)$ and  $\mu \in \Gamma$. }

Suppose $T$ is an irreducible $\Lambda$-graded $\cA \cV_\pi$-module with $T_{\beta} \neq (0)$. Then $T$ is supported on a coset
$\Gamma = \beta + \pi(\Lambda)$:
$$ T=\bigoplus_{\alpha \in \Gamma} T_\alpha .$$

\vskip+0.1in

A $\Lambda$-graded $W_\pi$-module $M$ 
(resp. $\cA\cV_\pi$-module $T$) 
is said to be 
\begin{enumerate}
\item
\textbf{simple graded} if $M$ (resp. $T$) does not contain a proper $\Lambda$-graded submodule,  
\item 
\textbf{cuspidal} if
the dimensions of homogeneous components of $M$ (resp. $T$) are uniformly bounded.
\end{enumerate}
Notice that, by the condition $(\cC)$, any submodule of a graded module is graded.
\subsection{Irreducible $W_\pi$-modules with multiplicities $>1$}\label{sect_mult+1}

It has been proved in \cite{M} and \cite{MP} that all of the homogeneous components
of a simple cuspidal $\bW$-module have dimension $\leq 1$. Here we recall a construction of  cuspidal $W_\pi$-modules with the homogeneous components of 
an arbitrary dimension $d\geq 1$, given in \cite{IM2}.

%For generalized Witt algebras $W_l$, the same seems to hold
%(see [S], where lemma ....).

Let $V$ be a $\C$-vector space and $\langle \cdot \vert \cdot \rangle$ be a skew-symmetric bilinear form on $V$. It induces a Poisson bracket on
the symmetric algebra $S^\bullet V$ satisfying
\[
\{\alpha,\beta\}=\langle \alpha \vert \beta \rangle 1
\]
for $\alpha,\,\beta\in V$. Since $\{S^mV,S^n V\}\subset S^{m+n-2}V$, it follows that $S^2V$ is a Lie subalgebra and each component $S^n V$ is an $S^2V$-module. \\
In the rest of this subsection, we consider the case $V=\C^2$ with the standard symplectic form $\langle \cdot , \cdot \rangle$ on $\C^2$ used to define the Poisson algebra $\cS$ in \S \ref{sect_Poisson-P}. 
In such a case, $S^2V$ is isomorphic to $\fsl_2$ (see section 4.1 below) and $S^nV$ is the irreducible $\fsl_2$-module of dimension $n+1$. 

Since $\cS$ is a Poisson algebra, it will be convenient to denote by $\cV$ the underlying Lie algebra and by $\cA$ the underlying commutative algebra.
As vector spaces, $\cS = \cV = \cA$.
Set
\[ \cS^{\ext}=\cV \ltimes \left( \cA \otimes S^2V \right).
\]
\noindent Clearly  $\cS^{\ext}$ has a structure of Lie algebra,
and  for any $n$, $\cA\otimes S^nV$ is an $\cS^{\ext}$-module.
Define a linear map $c: \cV\rightarrow \cA\otimes S^2V$ by
the formula:
\[ c(L_\lambda^{\cV})=\frac{1}{2}  L_{\lambda-\rho}^{\cA}\otimes \lambda(\lambda+\rho). \]
For $X\in \cV$,  set $j(X)=X+c(X)$. 

\begin{lemma}\label{lemma_MC}

The map $j:\cV \rightarrow \cS^{ext}$ is 
a Lie algebra morphism, i.e., the map $c$ 
satisfies the Maurer-Cartan equation

$$c([X,Y])= X.c(Y)-Y.c(X)+ [c(X),c(Y)]$$

\noindent for any $X,\,Y\in \cV$.

\end{lemma}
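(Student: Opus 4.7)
The plan is a direct verification by bilinearity. Since both sides of the Maurer--Cartan equation are bilinear in $X$ and $Y$, it suffices to check it for $X = L_\lambda^\cV$ and $Y = L_\mu^\cV$ with $\lambda, \mu \in \pi(\Lambda)$. Applying $c$ to $[L_\lambda^\cV, L_\mu^\cV] = \langle \lambda+\rho, \mu+\rho\rangle L_{\lambda+\mu}^\cV$ gives
\[
c([L_\lambda^\cV, L_\mu^\cV]) = \tfrac{1}{2}\langle \lambda+\rho, \mu+\rho\rangle L_{\lambda+\mu-\rho}^\cA \otimes (\lambda+\mu)(\lambda+\mu+\rho),
\]
and the elementary identity in $S^2V$,
\[
(\lambda+\mu)(\lambda+\mu+\rho) = \lambda(\lambda+\rho) + \mu(\mu+\rho) + 2\lambda\mu,
\]
splits this left hand side into three pieces matching the three summands on the right.

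For $X.c(Y) - Y.c(X)$, I use that the $\cV$-action on $\cA \otimes S^2V$ is induced from the Poisson bracket on the $\cA$-factor, so $\{L_\lambda, L_{\mu-\rho}\} = \langle \lambda+\rho, \mu\rangle L_{\lambda+\mu-\rho}$ yields
\[
X.c(Y) - Y.c(X) = \tfrac{1}{2} L_{\lambda+\mu-\rho}^\cA \otimes \bigl[\langle \lambda+\rho, \mu\rangle \mu(\mu+\rho) + \langle \lambda, \mu+\rho\rangle \lambda(\lambda+\rho)\bigr].
\]
Subtracting from the left hand side and applying the scalar identities $\langle \lambda+\rho, \mu+\rho\rangle - \langle \lambda+\rho, \mu\rangle = \langle \lambda, \rho\rangle$ and $\langle \lambda+\rho, \mu+\rho\rangle - \langle \lambda, \mu+\rho\rangle = \langle \rho, \mu\rangle$ (both following from bilinearity together with $\langle \rho, \rho\rangle = 0$), the verification reduces to the identity
\[
[c(X), c(Y)] = \tfrac{1}{2} L_{\lambda+\mu-\rho}^\cA \otimes \bigl[\langle \rho, \mu\rangle \lambda(\lambda+\rho) + \langle \lambda, \rho\rangle \mu(\mu+\rho) + 2\langle \lambda+\rho, \mu+\rho\rangle \lambda\mu\bigr].
\]

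The final step evaluates $[c(X), c(Y)]$ via the current-algebra Lie structure on $\cA \otimes S^2V$ induced by the Poisson bracket $\{\cdot, \cdot\}_V$ on $S^\bullet V$ restricted to $S^2V \cong \fsl_2$. This reduces the problem to a single Poisson bracket identity in $S^\bullet V$,
\[
\{\lambda(\lambda+\rho), \mu(\mu+\rho)\}_V = 2\langle \rho, \mu\rangle \lambda(\lambda+\rho) + 2\langle \lambda, \rho\rangle \mu(\mu+\rho) + 4\langle \lambda+\rho, \mu+\rho\rangle \lambda\mu,
\]
which I expect to be the main obstacle. The Leibniz expansion of the left hand side produces a combination of $\lambda\mu, \lambda\rho, \mu\rho, \rho^2$, whereas the right hand side also involves $\lambda^2$ and $\mu^2$; matching them requires crucially the linear dependency among the three vectors $\lambda, \mu, \rho$ in $V = \C^2$. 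Writing $\rho = p\lambda + q\mu$ (when $\lambda$ and $\mu$ are linearly independent, with $p$ and $q$ determined by $\langle \rho, \mu\rangle = p\langle \lambda, \mu\rangle$ and $\langle \lambda, \rho\rangle = q\langle \lambda, \mu\rangle$) and substituting into both sides produces the equality after a careful but routine coefficient match; the degenerate case $\mu \in \C\lambda$ can be handled directly. The hypothesis $\dim V = 2$ is the essential geometric input.
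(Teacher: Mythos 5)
The paper states Lemma \ref{lemma_MC} without a proof, so there is nothing to compare against on the paper's side; I can only assess your argument on its own merits. Your interpretation of the Lie structure on $\cS^{\ext}$ (current-algebra bracket $[a\otimes u, b\otimes v]=ab\otimes\{u,v\}_V$ on the ideal, with $\cV$ acting only on the $\cA$-factor by Poisson bracket) is the natural one, and all your intermediate computations check out: the reduction to the $S^2V$ identity
\[
\{\lambda(\lambda+\rho),\,\mu(\mu+\rho)\}_V = 2\langle\rho,\mu\rangle\,\lambda(\lambda+\rho) + 2\langle\lambda,\rho\rangle\,\mu(\mu+\rho) + 4\langle\lambda+\rho,\mu+\rho\rangle\,\lambda\mu
\]
is correct, including the use of $L^\cA_{\lambda-\rho}\cdot L^\cA_{\mu-\rho}=L^\cA_{\lambda+\mu-\rho}$. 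The one place where the argument is not actually executed is the ``careful but routine coefficient match'': you announce that the residual difference vanishes once $\rho$ is written as a linear combination of $\lambda,\mu$, but you do not show it. Carrying this out, the Leibniz expansion of the left side gives $(4A+2B+2C)\lambda\mu+(2A+B)\lambda\rho+(2A+C)\mu\rho+A\rho^2$ with $A=\langle\lambda,\mu\rangle$, $B=\langle\rho,\mu\rangle$, $C=\langle\lambda,\rho\rangle$, and subtracting the right side leaves $-2(B+C)\lambda\mu+(2A-B)\lambda\rho+(2A-C)\mu\rho-2B\lambda^2-2C\mu^2+A\rho^2$; substituting $A\rho=B\lambda+C\mu$ (the cofactor expansion valid when $A\neq0$) makes each of the coefficients of $\lambda^2$, $\mu^2$, $\lambda\mu$ vanish identically, and the degenerate case $\mu\in\C\lambda$ works out directly (with $A=0$, $B=-kC$). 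So the proof is correct, and you rightly isolate $\dim V=2$ as the essential input: for $\dim V>2$ the residual difference need not vanish. A small editorial point: the remark that the decomposition $(\lambda+\mu)(\lambda+\mu+\rho)=\lambda(\lambda+\rho)+\mu(\mu+\rho)+2\lambda\mu$ ``matches the three summands on the right'' is only heuristic, since the scalar prefactors on the three summands differ; your subsequent computation handles this correctly, but the phrasing could mislead.
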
  

For any $n\geq 0$, $\cA \otimes S^nV$ is naturally an $\cS^{ext}$-module.
Then  $\cM^n:=j^\ast\cA \otimes S^nV$ is a $\C^2$-graded $\cV$-module,
with all homogeneous components of dimension $n+1$. Given a coset
$\Gamma \in \C^2/\pi(\Lambda)$, set

$$\cM^n(\Gamma)=\bigoplus_{\mu\in\Gamma}\, L_\mu \otimes S^nV \subset \cM^n.$$

Recall that we assume that
$\pi(\Lambda)$ does not lie in a complex line (cf. Assumption iii)).

\begin{lemma}\label{lemma_M^n}
Assume that condition $(\cC)$ holds.
For any $n\geq 2$, the $W_\pi$-module $\cM^n(\Gamma)$ is irreducible.

Moreover, $W_\pi$-modules $\cM^n(\Gamma)$ 
and $\cM^k(\Gamma')$ are isomorphic if and only if $n=k$ and 
$\Gamma = \Gamma'$.

% given two distinct $\pi(\Lambda)$-cosets
% $\Gamma\neq \Gamma'$, the $W_\pi$-modules $\cM^n(\Gamma)$ 
%  and $\cM^n(\Gamma')$ are not isomorphic.
\end{lemma}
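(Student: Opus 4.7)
For irreducibility, the plan is to show that any $W_\pi$-submodule $U\subset \cM^n(\Gamma)$ is $\Lambda$-graded and then to generate all of $\cM^n(\Gamma)$ from any nonzero component. Gradedness: since $c(L_0^\cV)=0$, $j(L_0^\cV)$ acts on $L_\mu\otimes S^nV$ as the scalar $\langle\rho,\mu\rangle$, and by $(\cC)$ the map $\langle\rho,\cdot\rangle$ is injective on $\pi(\Lambda)$, so these scalars are pairwise distinct and the grading equals the $L_0$-eigenspace decomposition. Fix $\mu_0\in\Gamma$ with $U_{\mu_0}\neq 0$ and write $W:=U_{\mu_0}\subset L_{\mu_0}\otimes S^nV\cong S^nV$. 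Recall that $j(L_\lambda^\cV)$ sends $L_\mu\otimes v$ to $L_{\mu+\lambda}\otimes Y_\lambda(\mu)v$ with $Y_\lambda(\mu)=\phi_\mu(\lambda)\id+\tfrac{1}{2}X_\lambda$, $\phi_\mu(\lambda)=\langle\lambda+\rho,\mu+\rho\rangle$, and $X_\lambda=\lambda(\lambda+\rho)\in S^2V\cong\fsl_2$. Thus $W$ is invariant under the subalgebra $\cB\subset\End(S^nV)$ generated by those compositions $j(L_{\lambda_1}^\cV)\cdots j(L_{\lambda_k}^\cV)$ with $\sum\lambda_i=0$ (which return to the $\mu_0$-component); each such composition is a product of $Y_{\lambda_i}$'s, hence lies in $U(\fsl_2)^{(\leq k)}$ with top-PBW-symbol $2^{-k}X_{\lambda_1}\cdots X_{\lambda_k}\in S^k(\fsl_2)$.

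Condition iii) ensures $\{X_\lambda:\lambda\in\pi(\Lambda)\}$ spans $\fsl_2$: pick $\lambda_1,\lambda_2\in\pi(\Lambda)$ linearly independent; then $X_{\lambda_i}+X_{-\lambda_i}=2\lambda_i^2$ and $X_{\lambda_1+\lambda_2}-X_{\lambda_1}-X_{\lambda_2}=2\lambda_1\lambda_2$ together span $S^2V$. A filtration argument then shows the top-degree symbols of elements of $\cB$ span each $S^k(\fsl_2)$ --- the codimension-two constraint $\sum\lambda_i=0$ leaves ample freedom --- so $\cB$ equals the full image of $U(\fsl_2)$ in $\End(S^nV)$, which by Burnside equals $\End(S^nV)$ since $S^nV$ is a faithful absolutely irreducible $\fsl_2$-module for $n\geq 2$. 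Hence $W=S^nV$. To extend to an arbitrary $\mu\in\Gamma$, pick $\nu'\in\pi(\Lambda)\setminus\{0,-\rho\}$ such that $\nu'':=\mu-\mu_0-\nu'\in\pi(\Lambda)\setminus\{0,-\rho\}$ as well (always possible since $\pi(\Lambda)$ is infinite); since $\nu',\nu''\neq 0,-\rho$, the elements $X_{\nu'},X_{\nu''}\in\fsl_2$ are nonzero and act nontrivially on $S^nV$, so $Y_{\nu'}(\mu_0)$ and $Y_{\nu''}(\mu_0+\nu')$ are nonzero. Applying $j(L_{\nu'}^\cV)$ yields $U_{\mu_0+\nu'}\neq 0$, the previous step applied at $\mu_0+\nu'$ gives $L_{\mu_0+\nu'}\otimes S^nV\subset U$, and one more hop by $\nu''$ shows $L_\mu\otimes S^nV\subset U$. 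Therefore $U=\cM^n(\Gamma)$.

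For pairwise non-isomorphism, suppose $\phi:\cM^n(\Gamma)\to\cM^k(\Gamma')$ is a $W_\pi$-isomorphism. Matching dimensions of graded components gives $n=k$, and matching $L_0$-eigenspaces (combined with $\ker\langle\rho,\cdot\rangle=\C\rho$) forces $\phi$ to translate the grading by some $c\rho$, sending $L_\mu\otimes S^nV$ to $L_{\mu+c\rho}\otimes S^nV$. To force $c=0$, compute the trace of $j(L_\lambda^\cV)j(L_{-\lambda}^\cV)$ on each graded component; using $\tr_{S^nV}(X_\lambda)=0$, one obtains
\[
\tr_{L_\mu\otimes S^nV}\bigl(j(L_\lambda^\cV)j(L_{-\lambda}^\cV)\bigr) = (n+1)\phi_{\mu-\lambda}(\lambda)\phi_\mu(-\lambda) + \tfrac{1}{4}\tr(X_\lambda X_{-\lambda}),
\]
and the last term is independent of $\mu$. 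Equating the trace on $L_\mu$ with that on $L_{\mu+c\rho}$ and using $\phi_{\mu+c\rho}(\lambda)=\phi_\mu(\lambda)+c\langle\lambda,\rho\rangle$ simplifies, after straightforward algebra, to
\[
c\langle\lambda,\rho\rangle\,\langle\lambda,\,2\mu+(3+c)\rho\rangle=0 \qquad\text{for all }\lambda\in\pi(\Lambda),\ \mu\in\Gamma.
\]
If $c\neq 0$, condition $(\cC)$ gives $\langle\lambda,\rho\rangle\neq 0$ for every nonzero $\lambda\in\pi(\Lambda)$, and since $\pi(\Lambda)$ spans $\C^2$ by condition iii), this forces $2\mu+(3+c)\rho=0$ for every $\mu\in\Gamma$, absurd for the infinite set $\Gamma$. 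Hence $c=0$, i.e., $\Gamma=\Gamma'$.

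The hard part will be the spanning/filtration claim in the irreducibility step: one must verify that the $k$-fold symbolic products $X_{\lambda_1}\cdots X_{\lambda_k}$ under the constraint $\sum\lambda_i=0$ generate all of the PBW-graded $S^\bullet(\fsl_2)$, so that $\cB=\End(S^nV)$. Everything else reduces to routine bookkeeping with the explicit formulas for $Y_\lambda(\mu)$ and the cocycle $c$.
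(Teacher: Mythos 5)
Your argument for the second statement (pairwise non-isomorphism) is correct and takes a genuinely different route from the paper. You compare the traces of $j(L_\lambda^\cV)j(L_{-\lambda}^\cV)$ on graded components, while the paper restricts to the Witt subalgebra $\bigoplus_{m}\C L_{m\xi}$, identifies a distinguished tensor $\bW$-submodule $\bigoplus_k \C L_{\mu+k\xi}\otimes\xi^n$, and extracts the invariant $\beta$ of that tensor module. Both yield the missing piece $\langle\mu,\xi\rangle=\langle\mu',\xi\rangle$; your trace computation is self-contained and arguably shorter. (I checked the algebra: the difference of traces is indeed $-c\langle\lambda,\rho\rangle\langle\lambda,2\mu+(3+c)\rho\rangle$.)

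The irreducibility half, however, has a genuine gap, and it is more serious than the acknowledged ``hard part.'' Your Burnside strategy is to show that the algebra $\cB\subset\End(S^nV)$ generated by all closed-loop compositions $j(L_{\lambda_1}^\cV)\cdots j(L_{\lambda_k}^\cV)$ (with $\sum\lambda_i=0$) equals $\End(S^nV)$, via the claim that the top PBW symbols $X_{\lambda_1}\cdots X_{\lambda_k}$ span each $S^k(\fsl_2)$. Notice that this claim lives entirely in $S^\bullet(\fsl_2)$ and is $n$-independent; and $S^1V=V$ is also a faithful absolutely irreducible $\fsl_2$-module. So if the claim were true, the identical chain of reasoning would prove that every nonzero graded component of a $W_\pi$-submodule of $\cM^1(\Gamma)$ equals $V$, i.e.\ that $\cM^1(\Gamma)$ is irreducible. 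But $\cM^1(\Gamma)$ is reducible (there is an embedding $\cS_{-\frac{1}{2}\rho+\Gamma}\hookrightarrow\cM^1(\Gamma)$, cf.\ the Remark after Lemma \ref{lemma_M^n}), whose $\mu_0$-component is a line in $V$ that is necessarily invariant under all loop operators. Hence the spanning claim must in fact fail for some $k$, and the Burnside route as set up does not go through: the filtration argument has to be carried out inside $\text{gr}\,\End(S^nV)$ (where higher graded pieces vanish and the cocycle $c$ enters nontrivially), not inside $S^\bullet(\fsl_2)$.

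The paper avoids this entirely. It considers the $\gamma$-polynomial $L_{\lambda-\gamma}^\cV L_\gamma^\cV(L_{\mu-\rho}^\cA\otimes n')$, extracts its quartic coefficient $\frac14\{\gamma^2,\{\gamma^2,n'\}\}$, polarizes at $\gamma=\xi,\eta,\xi\pm\eta$ to land on the single operator $ef+fe-h^2$, shows it is invertible on $S^nV$ for $n\geq 2$ but not for $n=1$ (this is precisely where the hypothesis $n\geq 2$ bites), deduces that the components $N'_\mu$ of any $W_\pi$-submodule are independent of $\mu$ --- so every $W_\pi$-submodule is automatically an $\cA\cV_\pi$-submodule --- and then invokes Corollary \ref{M-irr} (the $\cA\cV_\pi$-irreducibility of $\cM^n(\Gamma)$, itself a consequence of the classification of finite-dimensional $\cP$-modules). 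If you want to keep a self-contained $\fsl_2$-style argument, the lesson from the paper is to aim not for $\cB=\End(S^nV)$ but for a single explicit invertible operator in $\cB$ that degenerates exactly at $n=1$.
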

The proof of this lemma will be given in Section \ref{sect:Proof-main-thm}.

\begin{rem}
The $W_\pi$-module $\cM^1(\Gamma)$ is reducible. More precisely, we have
\begin{enumerate}
\item there is an embedding $\cS_{-\frac{1}{2}\rho+\Gamma} \hookrightarrow \cM^1(\Gamma)$ of $W_\pi$-modules 
\[ L_{\mu-\frac{1}{2}\rho} \; \longmapsto \; L_{\mu-\rho}^{\cA} \otimes \left(\mu+\frac{1}{2}\rho\right) \qquad (\forall\, \mu \in \Gamma). \]
\item The cokernel of the above embedding is isomorphic to $\cS_{-\frac{3}{2}\rho+\Gamma}$ where the quotient map is 
\[ L_{\mu-\rho}^{\cA} \otimes v \; \longmapsto \; \left\langle \mu +\frac{1}{2}\rho, v \right\rangle L_{\mu-\frac{3}{2}\rho}\qquad (\forall\, \mu \in \Gamma). \]
\item The short exact sequence $\cS_{-\frac{1}{2}\rho+\Gamma} \hookrightarrow \cM^1(\Gamma) \twoheadrightarrow \cS_{-\frac{3}{2}\rho+\Gamma}$ does not split.
\end{enumerate}
\end{rem}

\subsection{Main result}
Now, we can state our main result of this article which was a conjecture in \cite{IM2}
(cf. Lemmas \ref{lemma_irr-tensor} and \ref{lemma_M^n}):

\begin{thm}\label{thm_main} Assume that $\pi: \Lambda \rightarrow \C^2$ satisfies the condition $(\cC)$. 
Let $M$ be a non-trivial simple $\Lambda$-graded cuspidal $W_\pi$-module. Then $M$ is isomorphic to one of the following $W_\pi$-modules:
\begin{enumerate}
\item $\cS_\Gamma$ for some $\Gamma \in \C^2/\pi(\Lambda)$ such that $-\rho, -2\rho \not\in \Gamma$, 
\item $\cS_{-\rho+\pi(\Lambda)}/\C L_{-\rho}$ and its restricted dual,
\item $\cM^n(\Gamma)$ for some $n \geq 2$ and $\Gamma \in \C^2/\pi(\Lambda)$. 
\end{enumerate} 
Moreover, any two different modules in the above list are non-isomorphic.
\end{thm}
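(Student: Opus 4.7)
My overall strategy is to follow the $\cA\cV$-module method of Billig--Futorny \cite{BF1}, adapted to the Witt type lattice setting. Given a non-trivial simple $\Lambda$-graded cuspidal $W_\pi$-module $M$, I would first associate to it an $\cA\cV_\pi$-cover $\widehat{M}$ inside the coinduced module $\Hom_\C(\cA_\pi, M)$, characterized as the smallest $\cA\cV_\pi$-submodule whose image under evaluation at the identity $L_{-\rho} \in \cA_\pi$ is all of $M$. The resulting evaluation surjection $\widehat{M} \twoheadrightarrow M$ of $W_\pi$-modules realizes $M$ as a simple $W_\pi$-quotient of (a suitable simple subquotient of) an $\cA\cV_\pi$-module, which reduces the classification of simple cuspidal $W_\pi$-modules to the already controlled $\cA\cV_\pi$-setting of Section 3.

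The main technical step, and the one I expect to be the chief obstacle, is showing that the cover $\widehat{M}$ is itself cuspidal. The argument should amount to producing, from the $\cV$- and $\cA$-actions on $\widehat{M}$, a family of finite-difference expressions polynomial in parameters $\lambda,\mu \in \pi(\Lambda)$ of the form
\[ \Omega_s(\lambda,\mu) = \sum_{k=0}^{s} (-1)^k \binom{s}{k}\, L^{\cV}_{\lambda + k\mu}\, L^{\cA}_{(s-k)\mu - \rho}, \]
and then showing that for $s$ larger than a bound depending only on $\dim M_\alpha$ these operators annihilate $\widehat{M}$, whence a uniform bound on $\dim \widehat{M}_\alpha$. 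The non-flat bracket $\{L_\lambda, L_\mu\} = \langle \lambda+\rho, \mu+\rho\rangle L_{\lambda+\mu}$ of $\cS$ produces curvature terms absent in the ordinary vector field setting of \cite{BF1}; the role of condition $(\cC)$ should be precisely to ensure that the coefficients arising are non-zero, so that the vanishing of $\Omega_s$ can be inverted to yield the boundedness estimate.

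Granting that $\widehat{M}$ is cuspidal, I would invoke the classification of simple cuspidal $\cA\cV_\pi$-modules established in Section 3 to conclude that every simple $\cA\cV_\pi$-subquotient of $\widehat{M}$ is isomorphic to some $\cM^n(\Gamma)$ with $n \geq 0$ and $\Gamma \in \C^2/\pi(\Lambda)$. Picking such a subquotient that still surjects onto $M$ as a $W_\pi$-module reduces the problem to identifying the simple $W_\pi$-quotients of $\cM^n(\Gamma)$. For $n \geq 2$, Lemma \ref{lemma_M^n} says $\cM^n(\Gamma)$ is $W_\pi$-irreducible, so $M \cong \cM^n(\Gamma)$ and we land in case (3). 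For $n = 0$ we have $\cM^0(\Gamma) \cong \cS_\Gamma$, and Lemma \ref{lemma_irr-tensor} enumerates the simple subquotients: either $\cS_\Gamma$ with $-\rho,-2\rho \notin \Gamma$ (case (1)), or the pair $\overline{M}$ and $\overline{M}^\vee$ of case (2). For $n = 1$, the remark following Lemma \ref{lemma_M^n} exhibits $\cM^1(\Gamma)$ as a non-split extension whose composition factors are of the form $\cS_{\Gamma'}$, so its simple $W_\pi$-quotients are absorbed into case (1).

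Finally, to establish non-isomorphism within the list I would use the two invariants of support coset and homogeneous multiplicity. Modules of types (1) and (2) have $\dim M_\mu \leq 1$, whereas $\cM^n(\Gamma)$ for $n \geq 2$ has $\dim \cM^n(\Gamma)_\mu = n+1 \geq 3$, so type (3) is disjoint from the others and moreover determines $n$ and $\Gamma$ uniquely. Within type (1) the support coset $\Gamma$ is recovered from $M$ and is excluded from being $-\rho+\pi(\Lambda)$ or $-2\rho+\pi(\Lambda)$, so type (1) is disjoint from type (2); and $\overline{M} \not\cong \overline{M}^\vee$ was already noted in the discussion following Lemma \ref{lemma_irr-tensor} as a consequence of condition iii).
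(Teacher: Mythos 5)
Your proposal reproduces the paper's argument in structure and substance: form the $\cA_\pi$-cover $\widehat{M}\subset\Hom_\C(\cA_\pi,M)$, prove its homogeneous components are uniformly bounded via differentiator identities, exhibit $M$ as a $W_\pi$-quotient of a simple composition factor of $\widehat{M}$ in the category of cuspidal $\cA\cV_\pi$-modules, invoke the classification of those as $\cM^n(\Gamma)$, and then split into cases $n\geq 2$, $n=1$, $n=0$, using Lemma \ref{lemma_M^n}, the remark on $\cM^1$, and Lemma \ref{lemma_irr-tensor} respectively. The non-isomorphism argument via support coset and homogeneous multiplicity is also the one the paper uses. Two small remarks. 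First, the finite-difference operator you wrote as a hybrid of $L^\cV$ and $L^\cA$ factors is not quite the paper's differentiator $\Omega_{\alpha,\beta;\xi}^{(m)}=\sum_i(-1)^i\binom{m}{i}L_{\alpha-i\xi}L_{\beta+i\xi}\in U(W_\pi)$, which lives entirely in $U(W_\pi)$ and is shown to annihilate any cuspidal $W_\pi$-module (Proposition \ref{prop_annihilator-cuspidal-mod}); the mixed $\cV$-$\cA$ expression only enters afterward in the evaluation of $\psi$ on elements of $\cA_\pi$ to bound the cover. Second, your statement that the $n=1$ case is ``absorbed into case (1)'' is slightly imprecise: the paper reduces $n=1$ to $n=0$ by showing every simple $W_\pi$-quotient of $\cM^1(\Gamma)$ is already a quotient of $\cM^0(-\tfrac32\rho+\Gamma)$, and the simple quotients of the $\cS_{\Gamma'}$ include not only the $\cS_\Gamma$ of case (1) but also $\overline{M}=\cS_{-\rho+\pi(\Lambda)}/\C L_{-\rho}$ from case (2), so $n\leq 1$ feeds both cases (1) and (2). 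With that adjustment, your proof is essentially the paper's.
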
 
The authors have been informed that, in some particular cases, Olivier Mathieu has independently proved this conjecture.

\section{$\cA\cV_\pi$-modules}
In this section, we show that, for any simple cuspidal module $M$ over $W_\pi$, there exists a simple cuspidal $\cA\cV_\pi$-module $T$ such that $T \twoheadrightarrow M$.  % When $M$ is simple, one can choose a simple $\cA\cV_\pi$-module for such a $T$. 

In what follows we identify $\Lambda$ and its image $\pi(\Lambda)$ in $\C^2$ to simplify notations. 
\subsection{Coinduced modules}\label{sect_coinduction}
Here, we recall a basic definition from Section $4$ of \cite{BF1}.

Let $M$ be a $\Lambda$-graded $W_\pi$-module. Recall that, on the $\C$-vector space
$\Hom_\C(\cA_\pi, M)$, one has the following standard $W_\pi$-action and $\cA_\pi$-action:
\begin{align*}
(X.\varphi)(a):=
&X.(\varphi(a))-\varphi(X.a), \\
(a.\varphi)(b):=
&\varphi(a\cdot b),
\end{align*}
where $\varphi \in \Hom_\C(\cA_\pi, M)$, $a,b \in \cA_\pi$ and $X \in W_\pi$. These actions define an $\cA\cV_\pi$-module structure on $\Hom_\C(\cA_\pi, M)$ and there is a natural surjective morphism of $W_\pi$-modules
$\Hom_\C(\cA_\pi, M) \twoheadrightarrow M; \varphi \, \mapsto \, \varphi(L_{-\rho}^{\cA})$ (cf. Proposition 4.3 in \cite{BF1}). However the coinduced module $\Hom_\C(\cA_\pi, M)$ is too big for our purposes.

Now, let $\widehat{M}$ be the subspace of $\Hom_\C(\cA_\pi, M)$ spanned by linear maps of the form $\psi(X,m)$ with $X \in W_\pi$ and $m \in M$ defined as
\[ \psi(X,m)(a)=(a.X).m \qquad \forall\, a \in \cA_\pi. \]
It follows that $\widehat{M}$ is a $\Lambda$-graded $\cA\cV_\pi$-module
and via the surjection
\break
 $\Hom_\C(\cA_\pi, M) \twoheadrightarrow M$, its image is $W_\pi.M$ (cf. Proposition 4.5 in \cite{BF1}). $\widehat{M}$ is called the \textbf{$\cA_\pi$-cover} of $M$.

 A homogeneous component
$\widehat{M}_\gamma$ is spanned by  
$\left\{ \psi(L_\lambda, M_{\gamma - \lambda}) \, | \, \lambda\in \Lambda \right\}$.

The finiteness of the multiplicities of $\widehat{M}$ is a non-trivial issue. In the next subsection, we show that if $M$ has bounded multiplicities, so does $\widehat{M}$.

\subsection{Differentiators}
For $\alpha, \beta, \xi \in \Lambda$ and $m \in \N$, we define a differentiator as
\[ \Omega_{\alpha, \beta; \xi}^{(m)}=\sum_{i=0}^m (-1)^{i}\binom{m}{i}L_{\alpha-i\xi}L_{\beta+i\xi} \in U(W_\pi). \]
By definition, differentiators satisfy the following relations:
\begin{equation}\label{eq_rel-diff-op}
\begin{split}
&\Omega_{\alpha, \beta; \xi}^{(m)}=(-1)^m \Omega_{\alpha-m\xi; \beta+m\xi; -\xi}^{(m)}, \\
&\Omega_{\alpha, \beta; \xi}^{(m)}=\Omega_{\alpha, \beta; \xi}^{(m-1)}-\Omega_{\alpha-\xi, \beta+\xi; \xi}^{(m-1)}.
\end{split}
\end{equation}
Here, we state an analogue of Theorem 3.3 in \cite{BF1}:
\begin{prop}\label{thm_BF} Let $\alpha, \beta, \gamma, \delta, \xi \in \Lambda$ and $m, r\geq 2$ be integers. Then the following identity holds in $U(W_\pi)$:
\begin{align*}
&\sum_{i=0}^m\sum_{j=0}^r (-1)^{i+j}\binom{m}{i}\binom{r}{j}\bigg(
[\Omega_{\alpha-i\xi, \beta-j\xi; \xi}^{(m)}, \Omega_{\gamma+i\xi, \delta+j\xi; \xi}^{(r)}]_+
\\
&\hbox{\hskip 7cm}-
[\Omega_{\alpha-i\xi, \gamma-j\xi; \xi}^{(m)}, \Omega_{\beta+i\xi, \delta+j\xi; \xi}^{(r)}]_+\bigg) \\
&=
\langle \beta-r\xi+\rho, \gamma-r\xi+\rho \rangle \bigg(
\langle \alpha+\rho, \delta+2r\xi+\rho \rangle \Omega_{\alpha+\delta+2r\xi, \beta+\gamma-2r\xi; \xi}^{(2m+2r)} \\
&\phantom{\langle \beta-r\xi+\rho, \gamma-r\xi+\rho \rangle} \\
&\hbox{\hskip 2cm}
+2(m\langle \xi, \delta+\rho \rangle+r\langle \alpha+\rho, \xi \rangle )
\Omega_{\alpha+\delta+(2r-1)\xi, \beta+\gamma-(2r-1)\xi; \xi}^{(2m+2r-1)}\bigg) \\
&- \langle \beta-\gamma, \xi \rangle \bigg[ 
\bigg( 2(m+r)\langle \alpha+\rho, \delta+2r\xi+\rho \rangle 
\phantom{\Omega_{\alpha+\delta+(2r-1)\xi, \beta+\gamma-(2r-1)\xi}^{(2m+2r-1)}}
 \\
& \phantom{\langle \beta-\gamma, \xi } \phantom{\int}
\hbox{\hskip 0.5cm}
-\left( m\langle \xi, \delta+\rho \rangle+r\langle \alpha+\rho, \xi \rangle \right)
\bigg) \Omega_{\alpha+\delta+(2r-1)\xi, \beta+\gamma-(2r-1)\xi; \xi}^{(2m+2r-1)} \\
&
+\left( m\langle \xi, \delta+\rho \rangle+r\langle \alpha+\rho, \xi \rangle \right)(2m+2r-1)
)\Omega_{\alpha+\delta+(2r-2)\xi, \beta+\gamma-(2r-2)\xi; \xi}^{(2m+2r-2)} 
\bigg],
\end{align*}
where $[\cdot, \cdot ]_+$ denotes the anti-commutator.
\end{prop}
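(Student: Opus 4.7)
The plan is to prove this identity by direct computation in $U(W_\pi)$, adapting the strategy of Theorem 3.3 of \cite{BF1} from the vector-field setting to the symplectic one. The only structural input is the commutation relation $[L_\lambda, L_\mu]=\langle \lambda+\rho,\mu+\rho\rangle L_{\lambda+\mu}$, which lets us swap any two adjacent generators at the cost of producing a single $L$. Since the symplectic form is alternating, $\langle\xi,\xi\rangle=0$, so all pairings produced by the shifts $\alpha-i\xi$, $\beta+i\xi$, $\gamma+i\xi$, $\delta+j\xi$ are at most linear in each of $i,j$ separately; this is what keeps the resulting binomial sums tractable.

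First I would expand each of the two anti-commutators as a signed binomial double sum of products $L_{\mu_1}L_{\mu_2}L_{\mu_3}L_{\mu_4}$. Using the commutation relation, every such product decomposes as a fully symmetric degree-four element of $S^4(W_\pi)\subset U(W_\pi)$ plus a sequence of correction terms of lower filtration degree (products of three, two, or one generators) whose coefficients are polynomials in the symplectic pairings of $\alpha,\beta,\gamma,\delta,\rho$ shifted by $i\xi$ and $j\xi$. The key simplification is that in the specific difference
\[
[\Omega^{(m)}_{\alpha-i\xi,\beta-j\xi;\xi}, \Omega^{(r)}_{\gamma+i\xi,\delta+j\xi;\xi}]_+ - [\Omega^{(m)}_{\alpha-i\xi,\gamma-j\xi;\xi}, \Omega^{(r)}_{\beta+i\xi,\delta+j\xi;\xi}]_+,
\]
the fully symmetric parts cancel, because interchanging $\beta\leftrightarrow\gamma$ leaves a symmetric product of four generators invariant. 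What survives is a linear combination of products of at most two generators, which after one further application of the commutation relation reduces to a linear combination of $L$'s of the form $L_{\alpha+\beta+\gamma+\delta-k\xi}$ weighted by polynomial expressions in the symplectic pairings.

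Next, summing over $i,j$ with the weights $(-1)^{i+j}\binom{m}{i}\binom{r}{j}$, I would invoke the combinatorial fact that $\sum_i(-1)^i\binom{m}{i}P(i)=0$ whenever $\deg P<m$. This annihilates all but the highest-degree contributions in $(i,j)$, and the survivors regroup precisely into the three differentiators $\Omega^{(2m+2r)}$, $\Omega^{(2m+2r-1)}$, $\Omega^{(2m+2r-2)}$ appearing on the right-hand side. The three distinct coefficients on the right correspond to three distinct depths of reordering: the leading ``no-cross-commutator'' contribution produces the double-pairing coefficient $\langle\beta-r\xi+\rho,\gamma-r\xi+\rho\rangle\langle\alpha+\rho,\delta+2r\xi+\rho\rangle$; a single commutator correction yields the mixed factor $m\langle\xi,\delta+\rho\rangle+r\langle\alpha+\rho,\xi\rangle$; and the residual double-commutator corrections give the terms weighted by $\langle\beta-\gamma,\xi\rangle$. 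Applying \eqref{eq_rel-diff-op} (or rather its immediate consequence that $\Omega^{(m)}$ is an $m$-fold iterated difference of two-generator products) to repackage the surviving sums as differentiators completes the verification.

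The principal obstacle is bookkeeping rather than any conceptual subtlety. There are roughly a dozen distinct orbits of terms produced by the four-generator reordering, each carrying a polynomial-in-$(i,j)$ coefficient from the symplectic pairings, and the binomial identity must be applied to each orbit separately; only after this pruning do the three right-hand-side differentiators emerge with the correct coefficients. The alternating property $\langle\xi,\xi\rangle=0$ is used repeatedly to kill quadratic-in-$i$ and quadratic-in-$j$ contributions, thereby ensuring that only $\Omega^{(2m+2r)}$, $\Omega^{(2m+2r-1)}$, $\Omega^{(2m+2r-2)}$ (and no higher-index differentiators) appear.
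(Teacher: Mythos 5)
Your proposal follows the same broad strategy as the paper's proof — expand the differentiators, reorder using the commutation relation, and invoke binomial vanishing identities $\sum_k(-1)^k\binom{p}{k}P(k)=0$ for $\deg P<p$ — but there are three genuine gaps in your sketch.

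First, the claim that ``the fully symmetric parts cancel because interchanging $\beta\leftrightarrow\gamma$ leaves a symmetric product of four generators invariant'' is not immediate. After expanding both $\Omega$'s, the first anti-commutator produces factors $L_{\alpha-(i+a)\xi}$, $L_{\beta-(j-a)\xi}$, $L_{\gamma+(i-b)\xi}$, $L_{\delta+(j+b)\xi}$, while the second produces $L_{\alpha-(i+a)\xi}$, $L_{\gamma-(j-a)\xi}$, $L_{\beta+(i-b)\xi}$, $L_{\delta+(j+b)\xi}$. These are \emph{not} related by the transposition $\beta\leftrightarrow\gamma$: the $\xi$-shift attached to each letter is different. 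Their images in $S^4(W_\pi)$ are distinct, so the leading parts do not cancel termwise. What makes the cancellation work is the reindexing $i\leftrightarrow a$, $j\leftrightarrow b$ in the second term (harmless because $\binom{m}{i}\binom{m}{a}$ and the sign are symmetric in those pairs), after which the four indices do match up. You need to say this explicitly, otherwise the argument fails.

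Second, the PBW-filtration cancellation only drops the degree from $4$ to $3$, not to $2$. To get down to products of two generators you have to argue further: the paper's route expands $[AB,CD]_+-[AC,BD]_+$ into a six-term expression, observes that four of the six terms involve only three of the four running variables $i,j,a,b$ and therefore vanish under the weighted sum (since $m,r\geq 2$), rewrites the remaining two as $[A,D][B,C]+A[[B,C],D]$, and kills the second term by another binomial vanishing. Your sketch compresses all of this into a single unexplained step.

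Third, you assert the survivors ``reduce to a linear combination of $L$'s of the form $L_{\alpha+\beta+\gamma+\delta-k\xi}$.'' That is wrong: the surviving contribution is a linear combination of \emph{products of two} generators of the form $L_{\alpha+\delta+\cdot\xi}L_{\beta+\gamma+\cdot\xi}$, which is exactly what you need, since each $\Omega^{(n)}_{\cdot,\cdot;\xi}$ on the right-hand side is itself a signed sum of such degree-two products, not a single $L$. If the surviving terms really collapsed to single generators you could never reconstruct the differentiators $\Omega^{(2m+2r)}$, $\Omega^{(2m+2r-1)}$, $\Omega^{(2m+2r-2)}$.

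Your closing paragraph correctly identifies the role of $\langle\xi,\xi\rangle=0$ and of the binomial identities, but before the bookkeeping can begin the three points above need to be repaired: insert the $(i,a)$ and $(j,b)$ swaps to align the two anti-commutators, carry out the explicit reduction from filtration degree $4$ through degree $3$ to degree $2$ (with the binomial vanishing invoked at both stages), and fix the degree of the surviving monomials.
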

\begin{proof} We follow the proof of Theorem 3.3 in \cite{BF1}.
By definition, the left hand side of the proposition reads
\begin{align*}
&\sum_{i,a=0}^m \sum_{b,j=0}^r (-1)^{i+j+a+b}\binom{m}{a}\binom{r}{b}\binom{m}{i}\binom{r}{j} \\
&\phantom{\sum_{i,a=0}^m \sum_{b, j=0}}\times 
\left(
[L_{\alpha-(i+a)\xi}L_{\beta-(j-a)\xi}, L_{\gamma+(i-b)\xi}L_{\delta+(j+b)\xi}]_+ \right.\\
&\phantom{\sum_{i,a=0}^m \sum_{b, j=0}\;}
-\left.
 [L_{\alpha-(i+a)\xi}L_{\gamma-(j-a)\xi}, L_{\beta+(i-b)\xi}L_{\delta+(j+b)\xi}]_+
\right).
\end{align*}
Switching $i$ with $a$ and $j$ with $b$ in the second term, we obtain
\begin{align*}
&\sum_{i,a=0}^m \sum_{b,j=0}^r (-1)^{i+j+a+b}\binom{m}{a}\binom{r}{b}\binom{m}{i}\binom{r}{j} \\
&\phantom{\sum_{i,a=0}^m \sum_{b, j=0}}\times 
\left(
[L_{\alpha-(i+a)\xi}L_{\beta-(j-a)\xi}, L_{\gamma+(i-b)\xi}L_{\delta+(j+b)\xi}]_+ \right.\\
&\phantom{\sum_{i,a=0}^m \sum_{b, j=0}\;}
-\left.
 [L_{\alpha-(i+a)\xi}L_{\gamma-(b-i)\xi}, L_{\beta+(a-j)\xi}L_{\delta+(j+b)\xi}]_+
\right).
\end{align*}
Setting $A=L_{\alpha-(i+a)\xi}, B=L_{\beta-(j-a)\xi}, C=L_{\gamma+(i-b)\xi}$ and $ D=L_{\delta+(j+b)\xi}$, this becomes
\[ \sum_{i,a=0}^m \sum_{b,j=0}^r (-1)^{i+j+a+b}\binom{m}{a}\binom{r}{b}\binom{m}{i}\binom{r}{j}([AB, CD]_+-[AC, BD]_+). \]
By the identity
\begin{align*}
& [AB, CD]_+-[AC, BD]_+ \\
=
& A[B,C]D+DA[C,B]+[C,D]AB+D[C,A]B+D[A,B]C+[D,B]AC,
\end{align*}
we rewrite the latter sum. In particular, as the monomials in last four terms depend only on three of the four running variables $i,j,a$ and $b$, thanks to the identities
\[ \sum_{k=0}^p (-1)^k \binom{p}{k}=\sum_{k=0}^p (-1)^k k\binom{p}{k}=0 
\]
for any integers $p \geq 2$, we see that this latter sum becomes
%\begin{align*}
%&\sum_{i,a=0}^m \sum_{b,j=0}^r (-1)^{i+j+a+b}\binom{m}{a}\binom{r}{b}\binom{m}{i}\binom{r}{j} \\
%&\phantom{\sum_{i,a=0}^m \sum_{b, j=0}}\times 
%\left(
%L_{\alpha-(i+a)\xi}[L_{\beta-(j-a)\xi}, L_{\gamma+(i-b)\xi}]L_{\delta+(j+b)\xi} \right.\\
%&\phantom{\sum_{i,a=0}^m \sum_{b, j=0}\;}
%+\left.
% L_{\delta+(j+b)\xi}L_{\alpha-(i+a)\xi}[L_{\gamma-(b-i)\xi}, L_{\beta+(a-j)\xi}]
%\right).
%\end{align*}

\begin{align*}
&\sum_{i,a=0}^m \sum_{b,j=0}^r (-1)^{i+j+a+b}\binom{m}{a}\binom{r}{b}\binom{m}{i}\binom{r}{j} (A[B,C]D+DA[C,B]) \\
=
&\sum_{i,a=0}^m \sum_{b,j=0}^r (-1)^{i+j+a+b}\binom{m}{a}\binom{r}{b}\binom{m}{i}\binom{r}{j}([A,D][B,C]+A[[B,C],D]),
\end{align*}
thanks to the identity
\[ A[B,C]D+DA[C,B]=[A,D][B,C]+A[[B,C],D].
\]

%this sum becomes
%\begin{align*}
%&\sum_{i,a=0}^m \sum_{b,j=0}^r (-1)^{i+j+a+b}\binom{m}{a}\binom{r}{b}\binom{m}{i}\binom{r}{j} \\
%&\phantom{\sum_{i,a=0}^m \sum_{b, j=0}}\times 
%\left(
%[L_{\alpha-(i+a)\xi},L_{\delta+(j+b)\xi}][L_{\beta-(j-a)\xi}, L_{\gamma+(i-b)\xi}] \right.\\
%&\phantom{\sum_{i,a=0}^m \sum_{b, j=0}\;}
%+\left.
%L_{\alpha-(i+a)\xi}[[L_{\gamma-(b-i)\xi}, L_{\beta+(a-j)\xi}],  L_{\delta+(j+b)\xi}]
%\right).
%\end{align*}
The second term becomes zero since the monomial part does not depend on the running variables $j$ and $b$. Hence, this sum becomes
\begin{align*}
& 
\sum_{i,a=0}^m \sum_{b,j=0}^r (-1)^{i+j+a+b}\binom{m}{a}\binom{r}{b}\binom{m}{i}\binom{r}{j} \\
& 
\phantom{\sum_{i,a=0}^m \sum_{b, j=0}}\times 
[L_{\alpha-(i+a)\xi},L_{\delta+(j+b)\xi}][L_{\beta-(j-a)\xi}, L_{\gamma+(i-b)\xi}] \\
=
&\sum_{i,a=0}^m \sum_{b,j=0}^r (-1)^{i+j+a+b}\binom{m}{a}\binom{r}{b}\binom{m}{i}\binom{r}{j} \\
&\phantom{\sum_{i,a=0}^m \sum_{b, j=0}}\times 
(\langle \beta+\rho, \gamma+\rho \rangle-(j-a)\langle \xi, \gamma+\rho \rangle+(i-b)\langle \beta+\rho, \xi \rangle)\\
&\phantom{\sum_{i,a=0}^m \sum_{b, j=0}}\times 
(\langle \alpha+\rho, \delta+\rho \rangle-(i+a)\langle \xi, \delta+\rho \rangle+(j+b)\langle \alpha+\rho, \xi \rangle)\\
&\phantom{\sum_{i,a=0}^m \sum_{b, j=0}}\times 
L_{\alpha+\delta-(i+a)\xi+(j+b)\xi}L_{\beta+\gamma+(i+a)\xi-(j+b)\xi}.
\end{align*}
As we have
\begin{align*}
&\langle \beta+\rho, \gamma+\rho \rangle-(j-a)\langle \xi, \gamma+\rho \rangle+(i-b)\langle \beta+\rho, \xi \rangle \\
=
&\langle \beta+\rho, \gamma+\rho \rangle+(i\langle \beta+\rho, \xi \rangle+a\langle \xi, \gamma+\rho \rangle)-(j\langle \beta+\rho, \xi \rangle+b\langle \xi, \gamma+\rho \rangle)
\\
=
&\langle \beta+\rho, \gamma+\rho \rangle\\
&+\frac{1}{2}((i+a)-(j+b))\langle \beta-\gamma, \xi \rangle -\frac{1}{2}((i-a)+(j-b))\langle \xi, \beta+\gamma+2\rho\rangle,
\end{align*}
% the only skew-symmetric term with respect to $i\leftrightarrow a$ and $j\leftrightarrow b$ appears in the last term, hence we can rewrite the above formula as follows:
the last term, which is skew-symmetric with respect to the substitution 
 $i\leftrightarrow a$ and $j\leftrightarrow b$, will disappear when we take the sum, leaving
\begin{align*}
&\sum_{i,a=0}^m \sum_{b,j=0}^r (-1)^{i+j+a+b}\binom{m}{a}\binom{r}{b}\binom{m}{i}\binom{r}{j} \\
&\phantom{\sum_{i,a=0}^m \sum_{b, j=0}}\times 
\left(\langle \beta+\rho, \gamma+\rho \rangle+\frac{1}{2}((i+a)-(j+b))\langle \beta-\gamma, \xi \rangle\right)\\
&\phantom{\sum_{i,a=0}^m \sum_{b, j=0}}\times 
(\langle \alpha+\rho, \delta+\rho \rangle-(i+a)\langle \xi, \delta+\rho \rangle+(j+b)\langle \alpha+\rho, \xi \rangle)\\
&\phantom{\sum_{i,a=0}^m \sum_{b, j=0}}\times 
L_{\alpha+\delta-(i+a)\xi+(j+b)\xi}L_{\beta+\gamma+(i+a)\xi-(j+b)\xi}.
\end{align*}
Making the change of variables $j \mapsto r-j$ and $b \mapsto r-b$, we obtain
\begin{align*}
&\sum_{i,a=0}^m \sum_{b,j=0}^r (-1)^{i+j+a+b}\binom{m}{a}\binom{r}{b}\binom{m}{i}\binom{r}{j} \\
&\phantom{\sum_{i,a=0}^m \sum_{b, j=0}}\times 
\left(\langle \beta-r\xi+\rho, \gamma-r\xi+\rho \rangle+\frac{1}{2}(i+j+a+b)\langle \beta-\gamma, \xi \rangle\right)\\
&\phantom{\sum_{i,a=0}^m \sum_{b, j=0}}\times 
(\langle \alpha+\rho, \delta+2r\xi+\rho \rangle-(i+a)\langle \xi, \delta+\rho \rangle-(j+b)\langle \alpha+\rho, \xi \rangle)\\
&\phantom{\sum_{i,a=0}^m \sum_{b, j=0}}\times 
L_{\alpha+\delta+2r\xi-(i+j+a+b)\xi}L_{\beta+\gamma-2r\xi+(i+j+a+b)\xi}. \\
% =
% &
% \sum_{k=0}^{2m}\sum_{l=0}^{2r}(-1)^{k+l}\binom{2m}{k}\binom{2r}{l}  \\
% &
% \phantom{\sum_{k=0}^m \sum_{l=0}}\times 
% \left(\langle \beta-r\xi+\rho, \gamma-r\xi+\rho \rangle+\frac{1}{2}(k+l)\langle \beta-\gamma, \xi \rangle\right) \\
% &
% \phantom{\sum_{i,a=0}^m \sum_{b, j=0}}\times 
% (\langle \alpha+\rho, \delta+2r\xi+\rho \rangle-k\langle \xi, \delta+\rho \rangle-l\langle \alpha+\rho, \xi \rangle)\\
% &
% \phantom{\sum_{i,a=0}^m \sum_{b, j=0}}\times 
% L_{\alpha+\delta+2r\xi-(k+l)\xi}L_{\beta+\gamma-2r\xi+(k+l)\xi}.   \\
\end{align*}
Now, set $u=i+j+a+b$. The coefficient of $L_{\alpha+\delta+(2r-u)\xi}L_{\beta+\gamma-(2r-u)\xi}$ in the above formula is the same as the coefficient of $t^{u}$ in the polynomial
\begin{align*}
&\langle \beta-r\xi+\rho, \gamma-r\xi+\rho \rangle \langle \alpha+\rho, \delta+2r\xi+\rho \rangle (1-t)^{2m+2r} \\
& + \langle \beta-\gamma, \xi \rangle \langle \alpha+\rho, \delta+2r\xi+\rho \rangle
t\frac{d}{dt}(1-t)^{2m+2r} \\
& -
\langle \beta-r\xi+\rho, \gamma-r\xi+\rho \rangle \langle \xi, \delta+\rho \rangle 
 (1-t)^{2r}t\frac{d}{dt}(1-t)^{2m} \\
& -
\langle \beta-r\xi+\rho, \gamma-r\xi+\rho \rangle \langle \alpha+\rho, \xi \rangle  (1-t)^{2m}t\frac{d}{dt}(1-t)^{2r} \\
& -
\frac{1}{2}\langle \beta-\gamma, \xi \rangle 
\left(
\langle \xi, \delta+\rho \rangle t\frac{d}{dt}\left( (1-t)^{2r}t\frac{d}{dt}(1-t)^{2m}\right)
\right. \\
&\phantom{\frac{1}{2}\langle \beta-\gamma, \xi \rangle } \left.
+\langle \alpha+\rho, \xi \rangle t\frac{d}{dt}\left( (1-t)^{2m}t\frac{d}{dt}(1-t)^{2r}\right)
\right).
\end{align*}
This polynomial can be rewritten as follows:
\begin{align*}
&\langle \beta-r\xi+\rho, \gamma-r\xi+\rho \rangle \langle \alpha+\rho, \delta+2r\xi+\rho \rangle (1-t)^{2m+2r} \\
-
&2(m+r)\langle \beta-\gamma, \xi \rangle \langle \alpha+\rho, \delta+2r\xi+\rho \rangle
t(1-t)^{2m+2r-1} \\
+
&\left( m\langle \xi, \delta+\rho \rangle+r\langle \alpha+\rho, \xi \rangle \right) \\
\times
&\bigg( 2\langle \beta-r\xi+\rho, \gamma-r\xi+\rho \rangle t(1-t)^{2m+2r-1} \\
&+ \langle \beta-\gamma, \xi \rangle t\frac{d}{dt}\left(t(1-t)^{2m+2r-1}\right)\bigg).
\end{align*}
Thus, its coefficient in $t^{u}$ is
\begin{align*}
&\langle \beta-r\xi+\rho, \gamma-r\xi+\rho \rangle \langle \alpha+\rho, \delta+2r\xi+\rho \rangle (-1)^{u}\binom{2m+2r}{u} \\
-
&2(m+r)\langle \beta-\gamma, \xi \rangle \langle \alpha+\rho, \delta+2r\xi+\rho \rangle
(-1)^{u-1}\binom{2m+2r-1}{u-1} \\
+
&\left( m\langle \xi, \delta+\rho \rangle+r\langle \alpha+\rho, \xi \rangle \right) \\
&\times \bigg[
2\langle \beta-r\xi+\rho, \gamma-r\xi+\rho \rangle (-1)^{u-1}\binom{2m+2r-1}{u-1} 
\\
&+\langle \beta-\gamma, \xi \rangle \bigg(
(-1)^{u-1}\binom{2m+2r-1}{u-1} \\
&\hbox{\hskip 4cm}
-(2m+2r-1)(-1)^{u-2}\binom{2m+2r-2}{u-2} 
\bigg)\bigg],
\end{align*}
and we obtain
\begin{align*}
&\sum_{i=0}^m\sum_{j=0}^r (-1)^{i+j}\binom{m}{i}\binom{r}{j}\bigg(
[\Omega_{\alpha-i\xi, \beta-j\xi; \xi}^{(m)}, \Omega_{\gamma+i\xi, \delta+j\xi; \xi}^{(r)}]_+\\
&\hbox{\hskip 6cm}
-
[\Omega_{\alpha-i\xi, \gamma-j\xi; \xi}^{(m)}, \Omega_{\beta+i\xi, \delta+j\xi; \xi}^{(r)}]_+\bigg) \\
=
&\langle \beta-r\xi+\rho, \gamma-r\xi+\rho \rangle \langle \alpha+\rho, \delta+2r\xi+\rho \rangle \Omega_{\alpha+\delta+2r\xi, \beta+\gamma-2r\xi; \xi}^{(2m+2r)} \\
-
&2(m+r)\langle \beta-\gamma, \xi \rangle \langle \alpha+\rho, \delta+2r\xi+\rho \rangle
\Omega_{\alpha+\delta+(2r-1)\xi, \beta+\gamma-(2r-1)\xi; \xi}^{(2m+2r-1)} \\
+
&\left( m\langle \xi, \delta+\rho \rangle+r\langle \alpha+\rho, \xi \rangle \right) \\
&\times \bigg[
2\langle \beta-r\xi+\rho, \gamma-r\xi+\rho \rangle \Omega_{\alpha+\delta+(2r-1)\xi, \beta+\gamma-(2r-1)\xi; \xi}^{(2m+2r-1)} 
\\
& \hbox{\hskip 1cm}
+\langle \beta-\gamma, \xi \rangle \bigg(
\Omega_{\alpha+\delta+(2r-1)\xi, \beta+\gamma-(2r-1)\xi; \xi}^{(2m+2r-1)}
\\
& \hbox{\hskip 4cm}
-(2m+2r-1)\Omega_{\alpha+\delta+(2r-2)\xi, \beta+\gamma-(2r-2)\xi; \xi}^{(2m+2r-2)} 
\bigg)\bigg] \\
\end{align*}
\begin{align*}
=
&\langle \beta-r\xi+\rho, \gamma-r\xi+\rho \rangle \left(
\langle \alpha+\rho, \delta+2r\xi+\rho \rangle \Omega_{\alpha+\delta+2r\xi, \beta+\gamma-2r\xi; \xi}^{(2m+2r)} \right.\\
& \hbox{\hskip 2cm}
 \left.
+2(m\langle \xi, \delta+\rho \rangle+r\langle \alpha+\rho, \xi \rangle )
\Omega_{\alpha+\delta+(2r-1)\xi, \beta+\gamma-(2r-1)\xi; \xi}^{(2m+2r-1)}\right) \\
-
&\langle \beta-\gamma, \xi \rangle \left[ 
\left( 2(m+r)\langle \alpha+\rho, \delta+2r\xi+\rho \rangle 
\phantom{\Omega_{\alpha+\delta+(2r-1)\xi, \beta+\gamma-(2r-1)\xi}^{(2m+2r-1)}}
\right. \right. \\
& \phantom{\langle \beta-\gamma, \xi }\left. \phantom{\int}
-\left( m\langle \xi, \delta+\rho \rangle+r\langle \alpha+\rho, \xi \rangle \right)
\right) \Omega_{\alpha+\delta+(2r-1)\xi, \beta+\gamma-(2r-1)\xi; \xi}^{(2m+2r-1)} \\
+&\left( m\langle \xi, \delta+\rho \rangle+r\langle \alpha+\rho, \xi \rangle \right)(2m+2r-1)
)\Omega_{\alpha+\delta+(2r-2)\xi, \beta+\gamma-(2r-2)\xi; \xi}^{(2m+2r-2)} 
\bigg].
\end{align*}
\end{proof}
A simple case is given as follows:
\begin{cor} \label{cor_diff-simple-case}
In particular, for $\beta-\gamma \in \C \xi$, we have
\begin{align*}
&\sum_{i=0}^m\sum_{j=0}^r (-1)^{i+j}\binom{m}{i}\binom{r}{j}\bigg(
[\Omega_{\alpha-i\xi, \beta-j\xi; \xi}^{(m)}, \Omega_{\gamma+i\xi, \delta+j\xi; \xi}^{(r)}]_+ \\
&  \hbox{\hskip 7cm}
-
[\Omega_{\alpha-i\xi, \gamma-j\xi; \xi}^{(m)}, \Omega_{\beta+i\xi, \delta+j\xi; \xi}^{(r)}]_+\bigg) \\
=
&\langle \beta+\rho, \gamma+\rho \rangle \left(
\langle \alpha+\rho, \delta+2r\xi+\rho \rangle \Omega_{\alpha+\delta+2r\xi, \beta+\gamma-2r\xi; \xi}^{(2m+2r)} \right.\\
&\phantom{\langle \beta+\rho, \gamma+\rho \rangle} \left.
+2(m\langle \xi, \delta+\rho \rangle+r\langle \alpha+\rho, \xi \rangle )
\Omega_{\alpha+\delta+(2r-1)\xi, \beta+\gamma-(2r-1)\xi; \xi}^{(2m+2r-1)}\right). 
\end{align*}
\end{cor}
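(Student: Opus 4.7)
The plan is to derive this corollary as a direct specialization of Proposition \ref{thm_BF} under the hypothesis $\beta - \gamma \in \C\xi$. The critical observation is that, since the form $\langle \cdot, \cdot \rangle$ on $\C^2$ is skew-symmetric, we have $\langle \xi, \xi \rangle = 0$, and hence $\langle \beta - \gamma, \xi \rangle = 0$ whenever $\beta - \gamma$ is a scalar multiple of $\xi$. This immediately annihilates the entire bracket multiplied by $\langle \beta - \gamma, \xi \rangle$ on the right-hand side of Proposition \ref{thm_BF}, eliminating all the terms involving $\Omega^{(2m+2r-2)}$ as well as one of the two $\Omega^{(2m+2r-1)}$ contributions.

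It remains to check that under the same hypothesis the prefactor of the surviving block, namely $\langle \beta - r\xi + \rho, \gamma - r\xi + \rho \rangle$, simplifies to $\langle \beta + \rho, \gamma + \rho \rangle$. Writing $\gamma = \beta - c\xi$ for the unique scalar $c \in \C$ with $\beta - \gamma = c\xi$ and expanding bilinearly, one gets
\[
\langle \beta + \rho - r\xi, \gamma + \rho - r\xi \rangle = \langle \beta + \rho, \gamma + \rho \rangle - r\langle \beta + \rho, \xi \rangle - r\langle \xi, \gamma + \rho \rangle + r^2 \langle \xi, \xi \rangle.
\]
Using $\gamma + \rho = \beta + \rho - c\xi$ and $\langle \xi, \xi \rangle = 0$, the three correction terms collapse via $-r\langle \xi, \gamma + \rho \rangle = r\langle \beta + \rho, \xi \rangle$, so they cancel the $-r\langle \beta + \rho, \xi \rangle$ contribution and the whole expression equals $\langle \beta + \rho, \gamma + \rho \rangle$.

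Substituting these two observations into the identity of Proposition \ref{thm_BF} produces exactly the stated formula. The argument is entirely mechanical specialization; there is essentially no substantive obstacle, and the only care required is the short bilinear computation verifying the prefactor identity above.
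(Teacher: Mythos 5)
Your proof is correct and is exactly what the paper intends: the paper offers no separate argument for the corollary, treating it as an immediate specialization of Proposition \ref{thm_BF}, which is precisely what you carry out (noting $\langle\beta-\gamma,\xi\rangle=0$ kills the second block, and the prefactor collapses because $-r\langle\beta+\rho,\xi\rangle-r\langle\xi,\gamma+\rho\rangle=-r\langle\beta-\gamma,\xi\rangle=0$).
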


Fix $\xi \in \Lambda \setminus \C \rho$. Recall that the Lie subalgebra $\bigoplus_{m \in \Z} \C L_{m\xi}$ is isomorphic to the Witt algebra $\bW$. 

By Corollary 3.4 of \cite{BF1}, it is known that, for any cuspidal $\bW$-module $V$ whose multiplicity is bounded by a constant, say $d$, there is $m \in \Z_{>0}$ (determined by $d$) such that the differentiator $\Omega_{r\xi, s\xi; \xi}^{(m)}$ acts trivially on $V$, for any $r, s \in \Z$. With the aid of this fact, we show the next proposition:
\begin{prop}\label{prop_annihilator-cuspidal-mod} Assume that $\pi: \Lambda \rightarrow \C^2$ satisfies the condition $(\cC)$.  
Then there exists $n \in \Z_{>0}$ such that for any $\delta, \xi \in \Lambda$ the differentiator $\Omega_{\delta, 0; \xi}^{(n)}$ acts trivially on a cuspidal $W_\pi$-module $M$. 
\end{prop}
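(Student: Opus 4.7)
The plan is to handle the ``on-line'' case $\delta\in\Z\xi$ via restriction to a Witt subalgebra (invoking \cite{BF1}), and then bootstrap to the general case using the algebraic identity of Proposition \ref{thm_BF}. First I would note that condition $(\cC)$, applied with $\alpha=0$, yields $\langle\rho,\beta\rangle\ne 0$ for every $\beta\in\pi(\Lambda)\setminus\{0\}$, hence $\pi(\Lambda)\cap\C\rho=\{0\}$. Consequently, for every $\xi\in\pi(\Lambda)\setminus\{0\}$ the subspace $\bW_\xi:=\bigoplus_{k\in\Z}\C L_{k\xi}$ is a Witt subalgebra of $W_\pi$ (the bracket $[L_{m\xi},L_{n\xi}]=(m-n)\langle\xi,\rho\rangle L_{(m+n)\xi}$ is non-degenerate because $\langle\xi,\rho\rangle\ne 0$). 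Restricting $M$ to $\bW_\xi$ gives a direct sum of cuspidal $\bW_\xi$-submodules along the orbits $\mu+\Z\xi\subset\Gamma$, each with multiplicity bound $d$. Corollary 3.4 of \cite{BF1} then produces $n_0=n_0(d)$, depending only on $d$, such that $\Omega^{(n)}_{r\xi,s\xi;\xi}$ annihilates $M$ for every $n\ge n_0$ and $r,s\in\Z$. This settles the trivial case $\xi=0$ (where $\Omega^{(n)}_{\delta,0;0}=\bigl(\sum_i(-1)^i\binom{n}{i}\bigr)L_\delta L_0=0$ for $n\ge 1$) and the case $\xi\ne 0$, $\delta\in\Z\xi$.

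In the remaining case $\xi\ne 0$, $\delta\notin\Z\xi$, I would specialize Corollary \ref{cor_diff-simple-case} with $(\alpha,\beta,\gamma,\delta_P)=(0,p\xi,q\xi,\delta+s\xi)$, integers $p\ne q$ and $m\ge n_0$. Every differentiator appearing on the LHS then has both indices in $\Z\xi$ and so annihilates $M$ by the first step, making the LHS act as zero on $M$. The prefactor $\langle\beta+\rho,\gamma+\rho\rangle=(p-q)\langle\xi,\rho\rangle$ is non-zero, and dividing by it yields a two-term relation linking $\Omega^{(2m+2r)}_{\delta+(s+2r)\xi,(p+q-2r)\xi;\xi}$ and $\Omega^{(2m+2r-1)}_{\delta+(s+2r-1)\xi,(p+q-2r+1)\xi;\xi}$ acting on $M$, with coefficients affine in $s$. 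Varying $s$ (keeping $m$, $r$ and $p+q=:T$ fixed), together with the second recursion of $(\ref{eq_rel-diff-op})$, produces a one-parameter recursion on the family of operators $\Omega^{(2m+2r-1)}_{\delta+u\xi,(T-u)\xi;\xi}$ indexed by $u\in\Z$, exhibiting them as scalar multiples of one another on $M$ via explicit rational factors.

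To deduce actual vanishing I would combine this with a second, independent relation obtained from Proposition \ref{thm_BF} (the full form, with $\beta-\gamma\notin\C\xi$) using $(\alpha,\beta,\gamma,\delta_P)=(0,\delta,0,0)$: the LHS again acts as zero on $M$ by the same on-line cancellation (each of its anticommutators has an on-line factor), and the resulting three-term identity ties together the differentiators of orders $2m+2r,\,2m+2r-1,\,2m+2r-2$ at the common total shift $\delta$. The joint linear system produced by these two identities has a coefficient determinant that is a non-trivial polynomial in $m,r$ whose leading term involves the non-zero pairing $\langle\xi,\rho\rangle$; choosing $m,r\ge n_0$ making this determinant non-zero forces each of the off-line differentiators to annihilate $M$. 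A final application of the shift recurrence in $(\ref{eq_rel-diff-op})$ translates the vanishing into $\Omega^{(n)}_{\delta,0;\xi}\cdot M=0$ for some $n$ bounded in terms of $n_0$ alone, hence uniform in $\delta,\xi$. The main obstacle is the determinantal non-vanishing step, which requires careful bookkeeping of the several pairings $\langle\rho,\delta\rangle$, $\langle\xi,\delta\rangle$, $\langle\xi,\rho\rangle$ and the parameters $m,r$ to isolate a specific non-trivial coefficient.
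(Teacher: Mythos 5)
Your plan agrees with the paper up to a point: treat the on-line case $\delta\in\Z\xi$ by restriction to the Witt subalgebra $\bigoplus_{k\in\Z}\C L_{k\xi}$ and Corollary 3.4 of \cite{BF1}, then for $\delta\notin\Z\xi$ apply Corollary \ref{cor_diff-simple-case} with $\alpha,\beta,\gamma\in\Z\xi$ so that the left-hand side annihilates $M$ (each anticommutator contains a factor $\Omega^{(m)}$ with both arguments on-line); dividing out $\langle\beta+\rho,\gamma+\rho\rangle\neq 0$ gives a two-term relation between $\Omega^{(2m+2r)}$ and $\Omega^{(2m+2r-1)}$. That much is correct and is exactly how the paper starts.

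The divergence, and the gap, is in how you manufacture an independent second relation and close the argument. The paper simply re-applies Corollary \ref{cor_diff-simple-case} with the direction reversed, $\xi\mapsto -\xi$, together with the shift $\alpha\mapsto\alpha+2(r-m)\xi$, $\beta\mapsto\beta-2(r-m)\xi$; thanks to the first identity in \eqref{eq_rel-diff-op}, this reproduces \emph{the same pair} of unknowns $\Omega^{(2m+2r-1)}$ with new coefficients, giving a genuine $2\times 2$ linear system. Its determinant is computed explicitly: $4r(m\langle\xi,\delta+\rho\rangle+r\langle\rho,\xi\rangle)\langle\xi,\delta+2\rho\rangle$, which at $r=m$ reduces to $4m^2\langle\xi,\delta\rangle\langle\xi,\delta+2\rho\rangle$, nonvanishing by $(\cC)$. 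Your proposal replaces that flip by a second invocation of the full Proposition \ref{thm_BF} with $(\alpha,\beta,\gamma,\delta_P)=(0,\delta,0,0)$. This does make the left-hand side vanish, but the resulting right-hand side involves $\Omega^{(2m+2r)}_{2r\xi,\,\delta-2r\xi;\xi}$, $\Omega^{(2m+2r-1)}_{(2r-1)\xi,\,\delta-(2r-1)\xi;\xi}$, $\Omega^{(2m+2r-2)}_{(2r-2)\xi,\,\delta-(2r-2)\xi;\xi}$ --- three terms, with $\delta$ sitting in the \emph{second} slot, while your first relation produces differentiators with $\delta$ in the \emph{first} slot. Matching these requires an additional application of the $(-1)^m$-and-shift symmetry in \eqref{eq_rel-diff-op}, a step your sketch does not address. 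More importantly, you never write down the resulting determinant. You assert it is "a non-trivial polynomial in $m,r$ whose leading term involves the non-zero pairing $\langle\xi,\rho\rangle$," but the pairing that actually controls the non-degeneracy in this problem is $\langle\xi,\delta+2\rho\rangle$ (and the companion factor $\langle\xi,\delta\rangle$); condition $(\cC)$ is tailored to kill precisely the first of these, and a vague appeal to $\langle\xi,\rho\rangle$ does not replace the calculation. Since the content of the Proposition \emph{is} this non-vanishing, and you explicitly defer it as "the main obstacle," the argument as written has a genuine gap at the decisive step. The paper's choice of a second relation (the $\xi\to-\xi$ flip) is not cosmetic: it is what keeps the system $2\times 2$ in the same two unknowns and makes the determinant computable.
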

\begin{proof} Let $\alpha, \beta, \gamma \in \Z \xi$ such that $\beta \neq \gamma$ and $\delta \in \Lambda \setminus \Z \xi$. 

As we have discussed above, the differentiator $\Omega_{r\xi, s\xi; \xi}^{(m)}$ acts trivially on $M$. Hence, the left hand side of the formula in Corollary \ref{cor_diff-simple-case} becomes $0$ and we obtain
\begin{align*}
&
\langle \alpha+\rho, \delta+2r\xi+\rho \rangle \Omega_{\alpha+\delta+2r\xi, \beta+\gamma-2r\xi; \xi}^{(2m+2r)} \\
+
&
2(m\langle \xi, \delta+\rho \rangle+r\langle \alpha+\rho, \xi \rangle )
\Omega_{\alpha+\delta+(2r-1)\xi, \beta+\gamma-(2r-1)\xi; \xi}^{(2m+2r-1)} 
= 0
\end{align*}
on $M$. Set
\[ K=\langle \alpha+\rho, \delta+2r\xi+\rho \rangle, \qquad 
L=2(m\langle \xi, \delta+\rho \rangle+r\langle \alpha+\rho, \xi \rangle ).
\]
The left hand side of the above equation can be rewritten with the aide of \eqref{eq_rel-diff-op} as
\begin{align*}
&K\Omega_{\alpha+\delta+2r\xi, \beta+\gamma-2r\xi; \xi}^{(2m+2r)}+
L\Omega_{\alpha+\delta+(2r-1)\xi, \beta+\gamma-(2r-1)\xi; \xi}^{(2m+2r-1)} \\
=
&K\Omega_{\alpha+\delta+2r\xi, \beta+\gamma-2r\xi; \xi}^{(2m+2r-1)}+
(L-K)\Omega_{\alpha+\delta+(2r-1)\xi, \beta+\gamma-(2r-1)\xi; \xi}^{(2m+2r-1)} .
\end{align*}
Writing the same formula for $\alpha'=\alpha+2(r-m)\xi, \beta'=\beta-2(r-m)\xi$ and $\xi'=-\xi$, the coefficients $K, L$ now become $K'$ and $L'$ that can be expressed as
\[ K'=K-L+2r\langle \xi, \alpha+\delta+2\rho \rangle, \qquad L'=-L. \]
Hence by \eqref{eq_rel-diff-op}, the system of linear equations we obtain is
\begin{align*}
&K\Omega_{\alpha+\delta+2r\xi, \beta+\gamma-2r\xi; \xi}^{(2m+2r-1)}+
(L-K)\Omega_{\alpha+\delta+(2r-1)\xi, \beta+\gamma-(2r-1)\xi; \xi}^{(2m+2r-1)}=0, \\
&(L'-K')\Omega_{\alpha+\delta+2r\xi, \beta+\gamma-2r\xi; \xi}^{(2m+2r-1)}+
K'\Omega_{\alpha+\delta+(2r-1)\xi, \beta+\gamma-(2r-1)\xi; \xi}^{(2m+2r-1)}=0,
\end{align*}
whence the determinant of the matrix of the coefficients is
\begin{align*}
&K\cdot K'-(L-K)\cdot (L'-K') 
=L\cdot 2r\langle \xi, \alpha+\delta+2\rho \rangle \\
=
&4r(m\langle \xi, \delta+\rho \rangle+r\langle \rho, \xi \rangle )
\langle \xi, \delta+2\rho \rangle.
\end{align*}
In particular, for $r=m$, the right hand side becomes
$4m^2\langle \xi, \delta\rangle \langle \xi, \delta+2\rho \rangle$ which is non-zero by assumption ($\cC$). Thus, 
\[ \Omega_{\alpha+\delta+2m\xi, \beta+\gamma-2m\xi; \xi}^{(4m)}=
\Omega_{\alpha+\delta+(2m-1)\xi, \beta+\gamma-(2m-1)\xi; \xi}^{(4m-1)}=0
\]
on $M$. Thus, for an appropriate choice of $\alpha, \beta$ and $\gamma$, the above formula shows that for $n = 4m$ we get $\Omega_{\delta, 0;\xi}^{(n)}=0$ on $M$ . The case $\delta \in \Z \xi$ follows from Corollary 3.4 of \cite{BF1}. 
\end{proof}

As a corollary, one can show
\begin{thm}  Assume that $\pi: \Lambda \rightarrow \C^2$ satisfies the condition $(\cC)$. Let $\widehat{M}$ be the cover of a cuspidal $W_\pi$-module $M$. 
Then the dimensions of the homogeneous components of $\widehat{M}$ are uniformly bounded. 
\end{thm}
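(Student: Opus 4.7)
The plan is to translate Proposition~\ref{prop_annihilator-cuspidal-mod} into a reduction relation in $\widehat{M}$ and then use the $\cA_\pi$-module structure to bound the number of essentially independent contributions to each graded component.

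First, I would verify that Proposition~\ref{prop_annihilator-cuspidal-mod} lifts to the identity
\begin{equation*}
\sum_{i=0}^{n}(-1)^i \binom{n}{i}\psi(L_{\delta-i\xi}, L_{i\xi}.m) = 0 \quad \text{in } \widehat{M},
\end{equation*}
valid for all $\delta,\xi\in\Lambda$ and $m\in M$: evaluating at $L_{\mu-\rho}\in\cA_\pi$ yields $\Omega^{(n)}_{\mu+\delta,0;\xi}.m$, which vanishes for every $\mu$. Since $L_0.m=\langle\rho,\alpha\rangle m$ for $m\in M_\alpha$ and the map $\langle\rho,\cdot\rangle:\Lambda\to\C$ is injective thanks to condition $(\cC)$, the $i=0$ term can be isolated whenever $\alpha\neq 0$, giving the reduction
\begin{equation*}
\psi(L_\delta,m) = -\frac{1}{\langle\rho,\alpha\rangle}\sum_{i=1}^{n}(-1)^i\binom{n}{i}\psi(L_{\delta-i\xi},L_{i\xi}.m),\qquad \alpha\neq 0.
\end{equation*}

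Next, a direct computation from the definition of the $\cA_\pi$-action on $\Hom_\C(\cA_\pi,M)$ yields $L^\cA_{\mu-\rho}\psi(L_\lambda,m)=\psi(L_{\lambda+\mu},m)$. Setting $V_\alpha := L^\cA_{\gamma-\alpha-\rho}\,\psi(L_0, M_\alpha)\subset\widehat{M}_\gamma$, we obtain $\widehat{M}_\gamma = \sum_{\alpha\in\pi(\Lambda)} V_\alpha$, and each $V_\alpha$ is a quotient of $M_\alpha$, hence of dimension at most $d:=\sup_\beta\dim M_\beta$. Taking $\delta=0$ in the reduction, applying $L^\cA_{\gamma-\alpha-\rho}$ to both sides, and using the $\cA_\pi$-product $L^\cA_{\gamma-\alpha-\rho}L^\cA_{-i\xi-\rho}=L^\cA_{\gamma-(\alpha+i\xi)-\rho}$, one obtains the key containment
\begin{equation*}
V_\alpha \subset \sum_{i=1}^{n} V_{\alpha+i\xi}\qquad \text{for any } \alpha\neq 0,\ \xi\in\Lambda.
\end{equation*}

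Finally, I would fix a $\Z$-basis $\xi_1,\ldots,\xi_N$ of $\Lambda$ and iterate the containment above with $\xi=\pm\xi_k$. Since shifts by $\pm i\xi_k$ alter only the $k$-th coordinate of $\alpha$ in this basis, I first reduce the $\xi_1$-coordinate into $\{0,1,\ldots,n-1\}$, then the $\xi_2$-coordinate, and so on; should the reduction pass through $\alpha=0$ at some intermediate step, that term simply contributes $V_0$, which already lies in the target sum. After $N$ stages, $\widehat{M}_\gamma = \sum_{\alpha\in B} V_\alpha$ where $B=\{\sum a_k\xi_k:0\leq a_k\leq n-1\}$ is a finite set of size $n^N$ independent of $\gamma$, yielding the uniform bound $\dim\widehat{M}_\gamma\leq n^N d$. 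The main obstacle is the combinatorial step: one must carefully verify that the bidirectional reductions can always steer the running index $\alpha$ into the target box $B$ and that hitting $\alpha=0$ at intermediate steps does not generate contributions outside $B$.
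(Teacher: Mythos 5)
Your proof is correct and takes essentially the same route as the paper's. The paper likewise fixes a $\Z$-basis $\vep_1,\dots,\vep_N$, derives the relation \eqref{eq-red-index} from the vanishing of $\Omega^{(n)}_{\gamma',0;\pm\vep_k}$ on $M$ by evaluating $\psi$ on elements $L_{\delta-\rho}\in\cA_\pi$, and iterates along each basis direction to collapse $\widehat{M}_{\beta+\gamma}$ onto the box $\{0,\dots,n-1\}^N$; your $V_\alpha$-formalism together with the observation $L^{\cA}_{\mu-\rho}\psi(L_\lambda,m)=\psi(L_{\lambda+\mu},m)$ is just a compact repackaging of that same computation, and your containment $V_\alpha\subset\sum_{i=1}^n V_{\alpha+i\xi}$ is exactly \eqref{eq-red-index}. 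The only step you leave implicit is the normalization of the base point $\beta\in\Gamma$: the paper opens by fixing $\beta$ with $\langle\rho,\beta\rangle=0$ (unique if it exists, arbitrary otherwise) precisely so that the one weight on which $L_0$ is not invertible becomes $\alpha=0$, which lies inside the target box — this is the hypothesis behind your statement that hitting $\alpha=0$ "contributes $V_0$, which already lies in the target sum." With that choice made explicit, your reduction argument closes.
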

\begin{proof}
Assume $M = \mathop\oplus_{\mu \in \Gamma} M_\mu$ for some coset $\Gamma \in \C^2 / \pi(\Lambda)$. Fix $\beta \in \Gamma$ such that $\langle \rho, \beta \rangle = 0$. By condition ($\cC$) there exists at most one such element in $\Gamma$. If $\Gamma$ contains no elements with this property, we fix $\beta$ to be an arbitrary element of $\Gamma$.

 Fix a $\Z$-basis $\vep_1, \ldots, \vep_N$ of $\Lambda$. 
Let $n$ be a natural number from Proposition \ref{prop_annihilator-cuspidal-mod}.
For any $\gamma \in \Lambda$, it is sufficient to show that $\widehat{M}_{\beta+\gamma}$ is spanned by 
\[ \bigcup_{i=1}^N\bigcup_{0\leq k_i <n } 
 \{ \psi(L_{\gamma - \sum_{i=1}^N k_i \vep_i}, x) \vert \; x \in M_{\beta + \sum_{i=1}^N k_i \vep_i}\}, 
\]
that is, we show that for any $\alpha \in \Lambda$ and $x \in M_\alpha$, $\psi(L_{\gamma-\alpha}, x)$ is expressible as a linear combination of the elements of the above form. For $\alpha=\sum_{i=1}^N k_i \vep_i$ with $0\leq k_i<n$ ($\forall\, i$), there is nothing to do. Otherwise, let $y \in M_{\beta + \alpha}$ such that $x=L_0.y$. By Proposition \ref{prop_annihilator-cuspidal-mod},
 $\Omega_{\gamma', 0; \pm\vep_k}^{(n)}$ acts trivially on $M$ for any $1\leq k \leq N$ and $\gamma' \in \Lambda$. This implies the next equality holds:
\begin{equation}\label{eq-red-index}
 \psi(L_{\gamma-\alpha}, x)=-\sum_{i=1}^n(-1)^{i}\binom{n}{i}\psi(L_{\gamma-\alpha \mp i\vep_k}, L_{\pm i\vep_k}.y). 
 \end{equation}
Indeed, one can show this identity by evaluating 
\[ \sum_{i=0}^n(-1)^{i}\binom{n}{i}\psi(L_{\gamma-\alpha\mp i\vep_k},L_{\pm i\vep_k}.y)
\]
on $L_{\delta-\rho}$ ($\delta \in \Lambda$):
\begin{align*}
&\sum_{i=0}^n(-1)^{i}\binom{n}{i}\psi(L_{\gamma-\alpha\mp i\vep_k},L_{\pm i\vep_k}.y)(L_{\delta-\rho}) \\
=
&\sum_{i=0}^n(-1)^{i}\binom{n}{i}(L_{\delta-\rho}^{\cA}.L_{\gamma-\alpha\mp i\vep_k})L_{\pm i\vep_k}.y \\
=
&\sum_{i=0}^n(-1)^{i}\binom{n}{i}L_{\gamma+\delta-\alpha\mp i\vep_k}L_{\pm i\vep_k}.y
=\Omega_{\gamma+\delta-\alpha, 0; \pm \vep_k}^{(n)}.y=0.
\end{align*}
Thus, applying appropriately \eqref{eq-red-index}, we have proved the proposition.
\end{proof}
\begin{rem} In fact, the above proof shows that the multiplicities of $\widehat{M}$ is uniformly bounded by $d \cdot n^N.$
\end{rem}

\subsection{From $\cA\cV_\pi$-modules to $W_\pi$-modules}

Let $M$ be a simple cuspidal $W_\pi$-module and $\widehat{M}$ be its $\cA_\pi$-cover. There is a surjection $\pr: \widehat{M} \twoheadrightarrow M$ (see \S \ref{sect_coinduction}). 
Let $\widehat{M}=T_0 \supsetneq T_1 \supsetneq \cdots  \supsetneq T_k=0$ be the composition series of $\cA\cV_\pi$-modules whose quotient modules $T_s/T_{s+1}$ are simple $\cA\cV_\pi$-modules. As $M$ is simple, there exists $0\leq s<k$ such that $\pr(T_0)=\cdots =\pr(T_s)=M$ and $\pr(T_{s+1})=\cdots =\pr(T_k)=0.$ In particular, we see that $\pr(T_s/T_{s+1})=M$. This means that any simple cuspidal $W_\pi$-module is a quotient of a simple cuspidal $\cA\cV_\pi$-module. In the next section, we classify simple cuspidal $\cA\cV_\pi$-modules. As an application, we obtain the classification of simple cuspidal $W_\pi$-modules.
 
\section{Classification of simple $\cA\cV_\pi$-modules of $\cA_\pi$-finite rank}

\subsection{Operator $D(\lambda)$}

Let $T$ be an $\cA\cV_\pi$-module with the finite-\ {\kern -6pt} dimensional homogeneous components, supported on a coset 
$\Gamma = \beta + \pi(\Lambda)$, $\beta \in \C^2$. 
We set $U$ to be the homogeneous component $U = T_{\beta}$, and as before, we can write
$T = \cA_\pi \otimes U$. 

We introduce the following family of operators on the finite-dimensional space $U$:

\begin{defn} For $\lambda \in \pi(\Lambda)$, we set $D(\lambda)=L^\cA_{-\lambda-\rho} \circ L^\cV_\lambda$.
\end{defn}
To be precise, the first factor $L_\lambda$ acts as an element of $W_\pi$ and the second factor $L_{-\lambda-\rho}$
as an element of $\cA_\pi$. 

Let us compute the commutation relations between $D(\lambda)$ and $D(\mu)$:
\begin{lemma}\label{lemma_comm-D} 
For $\lambda, \mu \in \pi(\Lambda)$, we have
\begin{equation} \label{DD}
[D(\lambda), D(\mu)]=\langle \lambda+\rho, \mu + \rho \rangle D(\lambda+\mu)- \langle \lambda, \mu+\rho \rangle D(\lambda) - \langle \lambda+\rho, \mu \rangle D(\mu).
\end{equation}
\end{lemma}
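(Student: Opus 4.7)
The plan is to derive the identity by a direct computation using only the $\cA\cV_\pi$-compatibility axiom together with the multiplication law on $\cA_\pi$ and the bracket on $W_\pi$.

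First, I would rewrite the compatibility relation
$$L^\cV_\lambda L^\cA_{\mu-\rho} = L^\cA_{\{L_\lambda, L_{\mu-\rho}\}} + L^\cA_{\mu-\rho} L^\cV_\lambda$$
in commutator form. Since $\{L_\lambda, L_{\mu-\rho}\} = \langle \lambda+\rho, \mu\rangle L_{\lambda+\mu-\rho}$, this becomes
$$[L^\cV_\lambda, L^\cA_{\mu-\rho}] = \langle \lambda+\rho, \mu\rangle\, L^\cA_{\lambda+\mu-\rho}.$$
Applied to the shifted index $\mu \mapsto -\mu$, this gives the single ``move-past'' formula I need:
$$L^\cV_\lambda L^\cA_{-\mu-\rho} = -\langle \lambda+\rho, \mu\rangle\, L^\cA_{\lambda-\mu-\rho} + L^\cA_{-\mu-\rho} L^\cV_\lambda.$$

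Next I would expand $D(\lambda)D(\mu) = L^\cA_{-\lambda-\rho} L^\cV_\lambda L^\cA_{-\mu-\rho} L^\cV_\mu$, push $L^\cV_\lambda$ past $L^\cA_{-\mu-\rho}$ using the above, and then merge the two adjacent $\cA_\pi$-factors by the commutative product $L^\cA_\alpha \cdot L^\cA_\beta = L^\cA_{\alpha+\beta+\rho}$. The two resulting index identities are $(-\lambda-\rho)+(\lambda-\mu-\rho)+\rho = -\mu-\rho$ and $(-\lambda-\rho)+(-\mu-\rho)+\rho = -(\lambda+\mu)-\rho$, which turn the product into
$$D(\lambda)D(\mu) = -\langle \lambda+\rho, \mu\rangle\, D(\mu) + L^\cA_{-(\lambda+\mu)-\rho}\, L^\cV_\lambda L^\cV_\mu.$$

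Then I would swap $\lambda$ and $\mu$ to obtain the analogous formula for $D(\mu)D(\lambda)$, subtract, and use that $L^\cA_{-(\lambda+\mu)-\rho}$ is now a common prefactor of $[L^\cV_\lambda,L^\cV_\mu] = \langle \lambda+\rho, \mu+\rho\rangle L^\cV_{\lambda+\mu}$. Recognising $L^\cA_{-(\lambda+\mu)-\rho} L^\cV_{\lambda+\mu} = D(\lambda+\mu)$ and using skew-symmetry $\langle \mu+\rho,\lambda\rangle = -\langle \lambda,\mu+\rho\rangle$ yields exactly
$$[D(\lambda),D(\mu)] = \langle \lambda+\rho,\mu+\rho\rangle D(\lambda+\mu) - \langle \lambda,\mu+\rho\rangle D(\lambda) - \langle \lambda+\rho,\mu\rangle D(\mu).$$

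There is no real obstacle here; the argument is entirely formal. The only thing to be careful about is bookkeeping of the three $\rho$-shifts (the shift inside the definition of $D$, the shift in the $\cA_\pi$ multiplication, and the shift in the bracket with $\cA_\pi$) and the sign coming from the skew-symmetry of $\langle\cdot,\cdot\rangle$ in the final symmetrisation step.
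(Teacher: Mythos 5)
Your proposal is correct and follows essentially the same computation as the paper's proof: expand $D(\lambda)D(\mu)$, use the $\cA\cV_\pi$-compatibility to move $L^\cV_\lambda$ past $L^\cA_{-\mu-\rho}$, merge the $\cA_\pi$-factors via $L^\cA_\alpha L^\cA_\beta = L^\cA_{\alpha+\beta+\rho}$, and antisymmetrise. The bookkeeping of the $\rho$-shifts and the final use of skew-symmetry all check out.
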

\begin{proof}
\begin{align*}
&[D(\lambda), D(\mu)] \\
=
&L_{-\lambda-\rho}^\cA \circ \{L_\lambda, L_{-\mu-\rho}\}^\cA \circ L_\mu^\cV  - L_{-\mu-\rho}^\cA\circ \{L_\mu, L_{-\lambda-\rho}\}^\cA \circ L_\lambda^\cV  \\
&+L_{-\lambda-\rho}^\cA \circ L_{-\mu-\rho}^\cA \circ L_\lambda^\cV \circ L_\mu^\cV
  - L_{-\mu-\rho}^\cA \circ L_{-\lambda-\rho}^\cA \circ L_\mu^\cV  \circ L_\lambda^\cV \\
=
&\langle \lambda+\rho, \mu + \rho \rangle D(\lambda+\mu) - \langle \lambda, \mu+\rho \rangle D(\lambda) - \langle \lambda+\rho, \mu \rangle D(\mu) 
\end{align*}
\end{proof}

 We denote by $\DD$ the infinite-dimensional Lie algebra spanned by the elements $D(\lambda)$, $\lambda \in \Lambda$, subject to the commutation relations
(\ref{DD}).

The $\cA\cV_\pi$-module structure can be recovered from the action of $\DD$:

\begin{lemma} \label{recover}
Let $U$ be a module for the Lie algebra $\DD$. Then $\cA_\pi \otimes U$ has a structure of an $\cA\cV_\pi$-module with $\cA_\pi$ acting
by multiplication on the left tensor factor and the action of $W_\pi$ defined as follows:
$$L_\lambda^\cV (L_\mu^\cA \otimes u) = \langle \lambda + \rho, \mu + \rho \rangle L_{\mu+\lambda}^\cA \otimes u
+ L_{\mu+\lambda}^\cA \otimes D(\lambda) u.$$
\end{lemma}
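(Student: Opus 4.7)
The $\cA_\pi$-action by multiplication on the left tensor factor is manifestly a unital $\cA_\pi$-module structure, so two things remain to check:
\begin{enumerate}
\item[(i)] the operators $L_\lambda^\cV$ satisfy the defining bracket of $W_\pi$, namely $[L_\lambda^\cV,L_\mu^\cV]=\langle\lambda+\rho,\mu+\rho\rangle L_{\lambda+\mu}^\cV$;
\item[(ii)] the compatibility $[L_\lambda^\cV,L_{\nu-\rho}^\cA]=\langle\lambda+\rho,\nu\rangle L_{\lambda+\nu-\rho}^\cA$, matching $\{L_\lambda,L_{\nu-\rho}\}^{\cA}$.
\end{enumerate}

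The plan for (i) is a direct expansion. Applying the defining formula twice to a generator $L_\mu^\cA\otimes u$, one finds that $[L_\lambda^\cV,L_{\lambda'}^\cV](L_\mu^\cA\otimes u)$ is a linear combination of four terms supported on $L_{\mu+\lambda+\lambda'}^\cA\otimes\,(-)$, where $(-)$ is either $u$, $D(\lambda)u$, $D(\lambda')u$, or $[D(\lambda),D(\lambda')]u$. Matching with $\langle\lambda+\rho,\lambda'+\rho\rangle L_{\lambda+\lambda'}^\cV(L_\mu^\cA\otimes u)$, the coefficient of $D(\lambda+\lambda')u$ together with the residual $D(\lambda)u$ and $D(\lambda')u$ coefficients collapses to
\[
[D(\lambda),D(\lambda')]=\langle\lambda+\rho,\lambda'+\rho\rangle D(\lambda+\lambda')-\langle\lambda,\lambda'+\rho\rangle D(\lambda)-\langle\lambda+\rho,\lambda'\rangle D(\lambda'),
\]
which is precisely Lemma~\ref{lemma_comm-D} and is available because $U$ is a $\DD$-module. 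The coefficient of $u$ reduces to the scalar identity
\[
\langle\lambda'+\rho,\mu+\rho\rangle\langle\lambda+\rho,\mu+\lambda'+\rho\rangle-\langle\lambda+\rho,\mu+\rho\rangle\langle\lambda'+\rho,\mu+\lambda+\rho\rangle=\langle\lambda+\rho,\lambda'+\rho\rangle\langle\lambda+\lambda'+\rho,\mu+\rho\rangle,
\]
which, after setting $a=\lambda+\rho$, $b=\lambda'+\rho$, $c=\mu+\rho$, is equivalent to the Plücker identity for the symplectic form on $\C^2$,
\[
\langle a,b\rangle\langle c,\rho\rangle+\langle a,c\rangle\langle\rho,b\rangle+\langle a,\rho\rangle\langle b,c\rangle=0,
\]
which holds because any three vectors in $\C^2$ are linearly dependent.

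Step (ii) is a short direct computation: applying $L_\lambda^\cV L_{\nu-\rho}^\cA$ and $L_{\nu-\rho}^\cA L_\lambda^\cV$ in turn to $L_{\mu-\rho}^\cA\otimes u$, the $D(\lambda)u$ contributions cancel and the scalar contributions combine as $\langle\lambda+\rho,\nu+\mu\rangle-\langle\lambda+\rho,\mu\rangle=\langle\lambda+\rho,\nu\rangle$, producing the required multiple of $L_{\lambda+\nu-\rho}^\cA\otimes u$. No deep identity is needed here.

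The only non-routine point is the scalar identity in (i); once one recognizes it as the Plücker relation in $\C^2$ evaluated at $\rho$, everything else is bookkeeping, and the rest of the argument is driven by the commutation relation \eqref{DD} built into the definition of $\DD$.
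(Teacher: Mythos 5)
Your proof is correct. The paper itself omits the argument (it is declared ``straightforward and left as an exercise to the reader''), so there is no proof in the text to compare against; your direct verification is the natural one. A brief confirmation of the two steps: in (i), expanding $[L_\lambda^\cV, L_{\lambda'}^\cV]$ on $L_\mu^\cA\otimes u$ yields coefficient $1$ in front of $[D(\lambda),D(\lambda')]u$, coefficient $\langle\lambda,\lambda'+\rho\rangle$ in front of $D(\lambda)u$, and coefficient $\langle\lambda+\rho,\lambda'\rangle$ in front of $D(\lambda')u$, so the $D$-part of the bracket relation is exactly \eqref{DD}, as you say; and the scalar part reduces, after setting $a=\lambda+\rho$, $b=\lambda'+\rho$, $c=\mu+\rho$, to
\[
\langle a,b\rangle\langle \rho,c\rangle+\langle a,\rho\rangle\langle c,b\rangle+\langle a,c\rangle\langle b,\rho\rangle=0,
\]
which is the degenerate Pl\"ucker relation for four vectors in the two-dimensional symplectic space $(\C^2,\langle\cdot,\cdot\rangle)$ evaluated at the fourth vector $\rho$; this is the same identity you wrote up to sign. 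Step (ii) is indeed an immediate cancellation. No gaps.
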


The proof of this lemma is straightforward and is left as an exercise to the reader.

Let $\vep_1, \vep_2, \ldots, \vep_N$ be a $\Z$-basis of $\pi(\Lambda)$. Write $\lambda=\sum_{i=1}^N \lambda_i \vep_i$. Our goal is to show that the operator $D(\lambda)$ is an $\End (U)$-valued polynomial in $\lambda_1, \ldots, \lambda_N$.

We will need the following auxiliary Lemma:

\begin{lemma} \label{lemma_gcd}
Let $Q_i, R_i \in \C [X_1, \ldots, X_N]$, $i=1, \ldots, r$, with \break
GCD$(Q_1, \ldots, Q_r) = Q_0$. Let $S$ be the set of common zeros of $Q_1, \ldots, Q_r$ in $\Z^N$.
Suppose $D$ is a function $\Z^N \rightarrow \C$, 
satisfying $R_i = Q_i D$, for all  $i=1, \ldots, r$ (here $Q_i, R_i$ are viewed
as functions on $\Z^N$). Then there exists a polynomial $R_0 \in \C [X_1, \ldots, X_N]$ such that $Q_0 D = R_0$ on $\Z^N \backslash S$.
\end{lemma}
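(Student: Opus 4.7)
The plan is to extract a common rational function from the data and then show that it is actually polynomial, by exploiting coprimality plus Zariski density of $\Z^N$ in $\C^N$.

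First I would normalize: write $Q_i = Q_0 \widetilde{Q}_i$, so that $\gcd(\widetilde{Q}_1,\ldots,\widetilde{Q}_r) = 1$ in the UFD $\C[X_1,\ldots,X_N]$. The hypothesis $R_i(x) = Q_i(x) D(x)$ for every $x\in\Z^N$ implies the cross-relations $R_i(x) Q_j(x) = R_j(x) Q_i(x)$ on $\Z^N$ (just multiply both sides of $R_i = Q_i D$ by $Q_j(x)$ and use $Q_j D = R_j$). Since $\Z^N$ is Zariski dense in $\C^N$, the polynomial identity $R_i Q_j = R_j Q_i$ holds in $\C[X_1,\ldots,X_N]$, which after cancelling the common factor $Q_0$ yields
\[
R_i \widetilde{Q}_j = R_j \widetilde{Q}_i \qquad \text{in } \C[X_1,\ldots,X_N].
\]

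Next I would produce the polynomial $R_0$. The identities above mean that the rational functions $R_i/\widetilde{Q}_i$ all coincide in the fraction field; call this common element $R_0$. To show $R_0$ is a polynomial, write $R_0 = A/B$ in lowest terms; then $B R_i = A \widetilde{Q}_i$ forces $B \mid A\widetilde{Q}_i$, and coprimality $\gcd(A,B)=1$ forces $B \mid \widetilde{Q}_i$ for each $i$. Hence $B$ divides $\gcd(\widetilde{Q}_1,\ldots,\widetilde{Q}_r)=1$, so $B$ is a unit and $R_0 \in \C[X_1,\ldots,X_N]$.

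Finally I would verify the claimed identity $Q_0 D = R_0$ on $\Z^N \setminus S$. For $x \in \Z^N\setminus S$ there exists an index $i$ with $Q_i(x)\neq 0$, and in particular $Q_0(x)\neq 0$ and $\widetilde{Q}_i(x)\neq 0$. Then
\[
Q_0(x) D(x) \;=\; \frac{Q_i(x) D(x)}{\widetilde{Q}_i(x)} \;=\; \frac{R_i(x)}{\widetilde{Q}_i(x)} \;=\; R_0(x),
\]
completing the argument.

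The step I expect to require the most care is the passage from the pointwise relations on $\Z^N$ to the polynomial identity $R_i Q_j = R_j Q_i$ in $\C[X_1,\ldots,X_N]$ (which uses Zariski density), and the verification that $R_0$ is polynomial rather than merely rational; both rely on the UFD/coprimality structure of $\C[X_1,\ldots,X_N]$, and are the substantive content of the lemma. Everything else is bookkeeping.
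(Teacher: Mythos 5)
Your proof is correct, and it differs from the paper's in a pleasant way. The paper first treats the case $r=2$: it deduces $Q_1 R_2 = Q_2 R_1$ as polynomials (via Zariski density), divides by $Q_0$, and uses coprimality in the UFD to find $R_0$ with $R_i = (Q_i/Q_0)\,R_0$; it then handles $r\geq 3$ by induction, replacing $Q_2,\ldots,Q_r$ by their GCD $Q'$ and the inductively constructed $R'$, and re-running the $r=2$ argument on the pair $(Q_1,Q')$ (this requires a small extra observation, namely that $\Z^N\setminus S'$ is still Zariski dense, so that the pointwise identity $Q_1 R' = Q' R_1$ on $\Z^N\setminus S'$ extrapolates to a polynomial identity). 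Your argument avoids the induction entirely: you show at once that all $R_i/\widetilde{Q}_i$ coincide in the fraction field, and then the ``lowest-terms'' denominator $B$ divides every $\widetilde{Q}_i$ and hence their GCD, which is $1$. This is a cleaner, one-shot treatment using the same essential tools (Zariski density of $\Z^N$ and UFD coprimality). The only cosmetic gap is that $R_i/\widetilde{Q}_i$ is defined only for indices $i$ with $\widetilde{Q}_i\neq 0$; since $Q_i=0$ forces $R_i=0$, such indices contribute nothing and can be discarded, and if \emph{all} $Q_i=0$ the lemma is vacuous (then $S=\Z^N$) — worth a sentence, but not a flaw in the argument.
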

\begin{proof}
The statement is trivial if $r=1$. Let us prove the claim when $r = 2$.
We have $Q_1 R_2 = Q_2 R_1$ 
as functions on $\Z^N$. Since a polynomial is determined by its values on $\Z^N$, the same equality holds in $\C [X_1, \ldots, X_N]$. 
Let $Q_0 = \GCD(Q_1, Q_2)$. Then $\frac{Q_1}{Q_0} R_2 = \frac{Q_2}{Q_0} R_1$.
Since $\C [X_1, \ldots, X_N]$ is a unique factorization domain and 
$\GCD(\frac{Q_1}{Q_0}, \frac{Q_2}{Q_0}) = 1$, there exists a polynomial $R_0$
such that $R_i = \frac{Q_i}{Q_0} R_0$ for $i = 1,2$. Comparing this with
$Q_i D = \frac{Q_i}{Q_0} Q_0 D$, we conclude that $Q_0 D = R_0$ at all points of $\Z^N$ where one of $Q_1, Q_2$ does not vanish.

The case of $r \geq 3$ may be now obtained by induction. Let 
$Q^\prime = \GCD(Q_2, \ldots, Q_{r})$. By induction assumption there exists
a polynomial $R^\prime$ such that $Q^\prime D = R^\prime$ outside the common zeros of $Q_2, \ldots, Q_r$. Since $Q_0 = \GCD(Q_1, \ldots, Q_r) = \GCD(Q_1, Q^\prime)$, we may apply the above argument with $r=2$ and get a polynomial $R_0$ such that $Q_0 D = R_0$ whenever $Q_1$ or $Q^\prime$ is non-zero. Since $Q^\prime$ is non-zero outside the set of common zeros of $Q_2, \ldots, Q_r$, we
conclude that the equality $Q_0 D = R_0$ holds outside the set of the common zeros of $Q_1, \ldots, Q_r$.
\end{proof}

Now we can establish polynomiality of $D(\lambda)$.
 
\begin{lemma} \label{lemma_D-poly}
Assume that $\pi: \Lambda \rightarrow \C^2$ satisfies the condition $(\cC)$. 
The operator $D(\lambda)$ is polynomial in $\lambda_1, \ldots, \lambda_N$ 
with coefficients in $\End (U)$.
\end{lemma}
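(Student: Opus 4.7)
My plan is to extract several identities of the form $P(\alpha) D(\alpha) = R(\alpha)$ with $P \in \C[\alpha_1,\ldots,\alpha_N]$ and $R \in \End(U)[\alpha_1,\ldots,\alpha_N]$, and then apply Lemma~\ref{lemma_gcd} to conclude polynomiality of $D$. The engine producing these identities is the commutation relation \eqref{DD}. Setting $\alpha = \lambda + \mu$ in \eqref{DD} rearranges to
\[
Q_\mu(\alpha)\, D(\alpha) = [D(\alpha - \mu), D(\mu)] + \langle \alpha - \mu, \mu + \rho \rangle D(\alpha - \mu) + \langle \alpha - \mu + \rho, \mu \rangle D(\mu),
\]
where $Q_\mu(\alpha) := \langle \alpha - \mu + \rho, \mu + \rho \rangle$ is $\C$-linear in $\alpha_1,\ldots,\alpha_N$ (since $\pi$ is $\Z$-linear and $\langle\cdot,\cdot\rangle$ is bilinear). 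The right-hand side is polynomial in $\alpha$ with operator coefficients except for the term $D(\alpha-\mu)$, which is the obstruction to a direct application of Lemma~\ref{lemma_gcd}.

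To eliminate $D(\alpha - \mu)$, I will use the differentiator vanishing. Since $U$ is finite-dimensional, $T = \cA_\pi \otimes U$ has uniformly bounded homogeneous components and is therefore cuspidal as a $W_\pi$-module. Proposition~\ref{prop_annihilator-cuspidal-mod} then supplies an integer $n$ such that $\Omega^{(n)}_{\gamma, 0; \xi}$ annihilates $T$ for every $\gamma, \xi \in \pi(\Lambda)$. Evaluating $\Omega^{(n)}_{\gamma, 0; \xi}(1 \otimes u) = 0$ via $L_\lambda^\cV(1 \otimes u) = L_{\lambda - \rho}^\cA \otimes D(\lambda) u$ and using $\langle \xi, \xi \rangle = 0$ produces the recurrence
\[
\sum_{i=0}^{n} (-1)^i \binom{n}{i} \bigl( D(\gamma - i\xi)\, D(i\xi) + i\langle \gamma + \rho, \xi \rangle D(i\xi) \bigr) = 0 \quad \text{in } \End(U).
\]
Since $L_{-\rho}$ is the unit of $\cA_\pi$, we have $D(0) = \langle \rho, \beta \rangle \id$; after replacing $\beta$ by a suitable shift $\beta + \nu$ if necessary (condition $(\cC)$ applied with the pair $0, \nu$ forbids $\langle \rho, \nu \rangle = 0$ for nonzero $\nu$), the coefficient $\langle \rho, \beta \rangle$ of the $i=0$ term $D(\gamma) D(0)$ is nonzero. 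Iterating the resulting recurrence along each coordinate direction $\xi = \vep_k$, one expresses $D(\alpha - \mu)$ as an $\End(U)$-linear combination, with polynomial-in-$\alpha$ coefficients, of values of $D$ at a fixed finite set of lattice points.

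Substituting this into the displayed identity and running over $\mu = \vep_1, \ldots, \vep_N$ yields a family of equations $P_k(\alpha)\, D(\alpha) = R_k(\alpha)$ for $k = 1, \ldots, N$, with $P_k \in \C[\alpha_1,\ldots,\alpha_N]$ (a product of shifted linear forms of $Q_{\vep_k}$-type) and $R_k \in \End(U)[\alpha_1,\ldots,\alpha_N]$. Condition $(\cC)$, applied to the various shifts of $\alpha + \rho, \vep_k + \rho$ occurring in these linear forms, forces the $P_k$ to share no common irreducible factor, so $\gcd(P_1, \ldots, P_N) = 1$ in $\C[\alpha_1, \ldots, \alpha_N]$. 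Applying Lemma~\ref{lemma_gcd} component-wise in any basis of $\End(U)$ then produces a polynomial $R_0 \in \End(U)[\alpha_1,\ldots,\alpha_N]$ with $D(\alpha) = R_0(\alpha)$ on the cofinite subset of $\Z^N$ where at least one $P_k$ is nonzero. The remaining finitely many exceptional points are handled by evaluating the displayed identity with a $\mu$ whose $Q_\mu$ does not vanish at the offending $\alpha$.

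The principal obstacle will be controlling the combinatorial structure of the polynomials $P_k$ produced by iterating the differentiator recurrence and verifying $\gcd(P_1, \ldots, P_N) = 1$; this coprimality is where condition $(\cC)$ is used decisively, as it ensures that the symplectic pairing does not vanish on any nontrivial cyclic subgroup of $\pi(\Lambda)$. A secondary subtlety is bookkeeping in Step~2 so that the iteration yields right-hand sides that are genuinely polynomial (rather than merely rational) in $\alpha$ before the GCD argument is invoked.
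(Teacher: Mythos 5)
Your approach departs from the paper's: the paper proves polynomiality by induction on the number of coordinates, using the rank-one result of \cite{B} as the base case and feeding the commutator \eqref{DD} (with $\lambda$ of lower rank) through Lemma~\ref{lemma_gcd}, so that the problematic term $D(\alpha-\mu)$ is already polynomial by the induction hypothesis. You instead fix $\mu$ and try to eliminate $D(\alpha-\mu)$ by iterating the differentiator recurrence, and that is where the argument has a genuine gap, not merely a bookkeeping subtlety. After dividing by the scalar $D(0)$, your recurrence reads
\[
D(\gamma) = c \sum_{i=1}^n (-1)^{i+1}\binom{n}{i}\bigl(D(\gamma - i\xi)D(i\xi) + i\langle\gamma+\rho,\xi\rangle D(i\xi)\bigr),
\]
a linear recurrence whose coefficients $D(i\xi)$ are operators, not scalars, with an inhomogeneous term linear in $\gamma$. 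Solutions of such a recurrence generically grow like powers of the companion operator's eigenvalues; iterating $m$ steps produces coefficients that are $m$-fold products of the $D(i\xi)$'s together with degree-$m$ polynomials in $\gamma$, so one does not obtain a single polynomial identity valid for all $\alpha$. The assertion that iteration yields polynomial-in-$\alpha$ coefficients is essentially the statement being proved (restricted to one lattice direction), and is exactly the nontrivial content of the rank-one theorem in \cite{B} which the paper invokes as its induction base. In short, the step is circular without that input.

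A secondary issue: you assert $\GCD(Q_{\vep_1},\ldots,Q_{\vep_N})=1$ without verification. Two affine linear forms are coprime iff their gradients are non-proportional, and the gradients of $Q_{\vep_k}$ are $(\langle\vep_j,\vep_k+\rho\rangle)_j$, which could a priori be proportional for distinct $k$. The paper sidesteps this by constructing from the commutator not one but two polynomial multipliers $P_1, P_2$ (via a change of basis), extracting a linear form $P_4 = \langle \lambda_1\vep_1+\cdots+\lambda_k\vep_k + 2\rho, \vep_k\rangle$ whose non-vanishing on $\Z^k$ is precisely condition $(\cC)$, and a companion form $P_5$ with $\GCD(P_4,P_5)=1$. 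This is then bootstrapped with the obviously coprime forms $\lambda_1,\ldots,\lambda_k$ in a second application of Lemma~\ref{lemma_gcd}. If you wish to salvage your route, isolate one-variable polynomiality as a separate lemma and invoke \cite{B} for it, after which your GCD step collapses back to the paper's induction.
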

\begin{proof}
By Lemma \ref{lemma_comm-D}, we have
\[ [D(m\vep_i), D(s\vep_i)]=\langle \rho, \vep_i \rangle (
(s-m) D((m+s)\vep_i)+ m D(m\vep_i) - s D(s\vep_i)),
\]
which is exactly the case corresponding to the Witt algebra $\bW$.
Hence, by \cite{B}, it follows that the operator $D(\lambda_i \vep_i)$ is polynomial in $\lambda_i$.

Now let us prove by induction on $k$ that $D(\lambda_1 \vep_1 + \ldots 
+ \lambda_k \vep_k)$ is a polynomial in $\lambda_1, \ldots, \lambda_k$.
The base of the induction, $k=1$ is already established.

For the induction step, we assume that $D(\lambda_1 \vep_1 + \ldots 
+ \lambda_{k-1} \vep_{k-1})$ is a polynomial. Consider the commutator
\begin{align*}
[ D(\lambda_1 \vep_1 &+ \ldots + \lambda_{k-1} \vep_{k-1}), D(\lambda_k \vep_k) ] \\
&= \langle \lambda_1 \vep_1 + \ldots + \lambda_{k-1} \vep_{k-1} + \rho,
\lambda_k \vep_k + \rho \rangle D(\lambda_1 \vep_1 + \ldots + \lambda_k \vep_k)
\\
&- \langle \lambda_1 \vep_1 + \ldots + \lambda_{k-1} \vep_{k-1}, \lambda_k \vep_k+\rho \rangle D(\lambda_1 \vep_1 + \ldots + \lambda_{k-1} \vep_{k-1}) 
\\
&- \langle \lambda_1 \vep_1 + \ldots + \lambda_{k-1} \vep_{k-1} +\rho, \lambda_k \vep_k \rangle D(\lambda_k \vep_k).
\end{align*}
By the induction assumption, the left hand side, as well as the last two summands in the right hand side, are polynomials. Set 
\begin{align*}
P_1  = &\langle \lambda_1 \vep_1 + \ldots + \lambda_{k-1} \vep_{k-1} + \rho,
\lambda_k \vep_k + \rho \rangle \\
&= \lambda_1 \lambda_k \langle \vep_1, \vep_k \rangle 
+ \ldots + \lambda_{k-1} \lambda_k \langle \vep_{k-1}, \vep_k \rangle 
\\
&+ \lambda_1 \langle \vep_1, \rho \rangle + \ldots + 
\lambda_{k-1} \langle \vep_{k-1}, \rho \rangle
- \lambda_{k} \langle \vep_{k}, \rho \rangle.
\end{align*}
Then $P_1 (\lambda_1, \ldots, \lambda_k) 
D(\lambda_1 \vep_1 + \ldots + \lambda_k \vep_k)$ is a polynomial in 
$\lambda_1, \ldots, \lambda_k$.

We can perform a change of basis $\vep^\prime_{k-1} = \vep_{k-1} + \vep_k$,
$\vep^\prime_i = \vep_i$ for $i \neq k-1$; $\lambda^\prime_k = \lambda_k - \lambda_{k-1}$, $\lambda^\prime_j = \lambda_j$ for $j \neq k$, 
and apply the same argument, obtaining that
$P^\prime_1 (\lambda^\prime_1, \ldots, \lambda^\prime_k) 
D(\lambda^\prime_1 \vep^\prime_1 + \ldots + \lambda^\prime_k \vep^\prime_k)$ is a polynomial in 
$\lambda_1, \ldots, \lambda_k$.

Note that $\lambda^\prime_1 \vep^\prime_1 + \ldots + \lambda^\prime_k \vep^\prime_k
= \lambda_1 \vep_1 + \ldots + \lambda_k \vep_k$.

Set 
\begin{align*}
P_2 (\lambda_1, &\ldots, \lambda_k) = P^\prime_1 (\lambda^\prime_1, \ldots, \lambda^\prime_k) \\
&= \lambda_1 (\lambda_k - \lambda_{k-1}) \langle \vep_1, \vep_k \rangle 
+ \ldots + \lambda_{k-1} (\lambda_k - \lambda_{k-1}) \langle \vep_{k-1} + \vep_k, \vep_k \rangle \\
&+ \lambda_1 \langle \vep_1, \rho \rangle + \ldots + 
\lambda_{k-1} \langle \vep_{k-1} + \vep_k, \rho \rangle
- (\lambda_k - \lambda_{k-1}) \langle \vep_{k}, \rho \rangle.
\end{align*}
Let us consider the difference
\begin{equation*}
P_3 = P_1 - P_2 = 
\lambda_1 \lambda_{k-1} \langle \vep_1, \vep_k \rangle + \ldots
+ \lambda_{k-1} \lambda_{k-1} \langle \vep_{k-1}, \vep_k \rangle
- 2 \lambda_{k-1} \langle \vep_{k}, \rho \rangle,
\end{equation*}
and factor out $\lambda_{k-1}$:
\begin{align*}
P_4 = \lambda_1 \langle \vep_1, \vep_k \rangle + \ldots
+  \lambda_{k-1} \langle \vep_{k-1}, \vep_k \rangle
- 2  \langle \vep_{k}, \rho \rangle \\
= \langle \lambda_1 \vep_1 + \ldots + \lambda_k \vep_k + 2 \rho, \vep_k \rangle .
\end{align*}
Finally, set
\begin{equation*}
P_5 = P_1 - \lambda_k P_4
= \lambda_1 \langle \vep_1, \rho \rangle + \ldots 
+ \lambda_k \langle \vep_k, \rho \rangle .
\end{equation*}

Then for each $j = 1,\ldots,5$, $P_j \lambda_{k-1} D(\lambda_1 \vep_1 + \ldots + \lambda_k \vep_k)$ is a polynomial in 
$\lambda_1, \ldots, \lambda_k$.

Since $\GCD(P_4, P_5) = 1$, we obtain by applying Lemma \ref{lemma_gcd} that 
$\lambda_{k-1} D(\lambda_1 \vep_1 + \ldots + \lambda_k \vep_k)$ is a polynomial in 
$\lambda_1, \ldots, \lambda_k$ outside the set of common zeros of $P_4$ and $P_5$. However by assumption, 
\begin{equation*}
\langle \delta + 2 \rho, \vep_k \rangle \neq 0
\end{equation*}
for all $\delta \in \Lambda$. Hence $P_4$ has no zeros in $\Z^k$.

By symmetry, we obtain that $\lambda_j D(\lambda_1 \vep_1 + \ldots + \lambda_k \vep_k)$ is a polynomial for all $j = 1, \ldots, k$.
Since $\GCD(\lambda_1, \ldots, \lambda_k) = 1$ in $\C[\lambda_1, \ldots, \lambda_k]$, applying again Lemma \ref{lemma_gcd}, we establish that there exists an $\End(U)$-valued polynomial $T$ such that $D = T$ on $\Z^k \backslash \{ 0 \}$.

However we know from \cite{B} that $D(\lambda_1 \vep_1)$ is given by a polynomial in $\lambda_1$ for all $\lambda_1 \in \Z$ and must agree with 
$T(\lambda_1, 0, \ldots, 0)$ for $\lambda_1 \neq 0$. Two polynomials in $\lambda_1$ that have equal values on $\Z \backslash \{ 0 \}$ must be equal, hence 
$D(0)$ is the constant term of $T$, and $D = T$ everywhere on $\Z^k$. 
This completes the step of induction.
\end{proof}

\subsection{Lie algebra $\cP$}

By Lemma \ref{lemma_D-poly}, we may assume that the operator 
$D(\lambda)$ can be expressed in the form 
\begin{equation} \label{D-expansion}
D(\lambda)=\sum_{K \in \Z_+^N} \frac{\lambda^K}{K!}P_K, 
\end{equation}
where $P_K=0$ except for finitely many $K$.
Here, $\Z_+=\Z_{\geq 0}$.  We denote by $|K|$ the sum of the components of a vector
$K \in \Z^N_+$.
It follows from the above that $P_0 = D(0)$.  Lemma \ref{lemma_comm-D}
implies that $P_0$ is a central element.
By Lemma \ref{lemma_comm-D}, 
\begin{align*}
&[D(\lambda), D(\mu)] 
=
\sum_{K,S \in \Z^N_+} \frac{\lambda^K\mu^S}{K!S!}[P_K, P_S] \\
=
&\big(\sum_{i \neq j} \lambda_i\mu_j \langle \vep_i, \vep_j \rangle+\sum_i (\mu_i-\lambda_i)\langle \rho, \vep_i\rangle \big)
\sum_{R \in \Z^N_+} \frac{(\lambda+\mu)^R}{R!}P_R \\
&-\big(\sum_{i \neq j} \lambda_i\mu_j\langle \vep_i, \vep_j \rangle-\sum_i \lambda_i \langle \rho, \vep_i\rangle \big)
\sum_{R \in \Z^N_+} \frac{\lambda^R}{R!}P_R \\
&
  -\big(\sum_{i \neq j} \lambda_i\mu_j\langle \vep_i, \vep_j \rangle+\sum_i \mu_i \langle \rho, \vep_i\rangle \big)\sum_{R \in \Z^N_+} \frac{\mu^R}{R!}P_R \\
=
&\big(\sum_{i \neq j} \lambda_i\mu_j\langle \vep_i, \vep_j \rangle +\sum_i (\mu_i-\lambda_i)\langle \rho, \vep_i\rangle \big)\sum_{|R_1|, |R_2| > 0} \frac{\lambda^{R_1}}{(R_1)!}\cdot \frac{\mu^{R_2}}{(R_2)!}P_{R_1+R_2} \\
+ &\sum_i \mu_i\langle \rho, \vep_i\rangle \sum_{|R|>0} \frac{\lambda^R}{R!}P_R - \sum_i \lambda_i \langle \rho, \vep_i\rangle \sum_{|R|>0} \frac{\mu^R}{R!}P_R
 - \sum_{i\neq j}\lambda_i\mu_j\langle \vep_i, \vep_j \rangle P_0 .
\end{align*}
Let $K, S \in \N^N$ such that $\vert K\vert, \vert S\vert >1$.  Comparing the coefficient of $\lambda^K\mu^S$, we obtain
\begin{equation} \label{comm_PK-PS-1}
[P_K, P_S] 
=
\sum_{i\neq j}K_iS_j\langle \vep_i, \vep_j \rangle P_{K+S-\vep_i-\vep_j}+\sum_i (S_i-K_i)\langle \rho, \vep_i\rangle P_{K+S-\vep_i}.
\end{equation}

Now, compute the case $K=\vep_i$ and $S=\vep_j$. Comparing the coefficients of $\lambda_i \mu_j$, we obtain
\begin{equation} \label{comm_PK-PS-2}
[P_{\vep_i}, P_{\vep_j}] 
=
\langle \rho, \vep_j \rangle P_{\vep_i} - \langle \rho, \vep_i\rangle P_{\vep_j}
-\langle \vep_i, \vep_j \rangle P_{0}.
\end{equation}

For $K=\vep_i$ and $\vert S\vert >1$, Comparing the coefficients of $\lambda_i \mu^S$, we obtain
\begin{equation} \label{comm_PK-PS-3}
[P_{\vep_i}, P_{S}] 
=
-\langle \rho,\vep_i\rangle P_S+\sum_j \langle \rho, \vep_j\rangle s_jP_{S+\vep_i-\vep_j}.
\end{equation}
Let $\cP$ be the infinite-dimensional Lie algebra
with a basis $\{ P_K \, | \, K \in \Z^N_+ \}$
subject to the commutation relations (\ref{comm_PK-PS-1})-(\ref{comm_PK-PS-3}). 

 Lemma \ref{lemma_D-poly} implies that every finite-dimensional representation
of Lie algebra $\DD$ yields a representation of $\cP$ on the same space with the property that $P_K$ act trivially for all but finitely many $K$. In fact, all finite-dimensional $\cP$-modules have that property.

\begin{lemma} \label{most-vanish}
For every finite-dimensional $\cP$-module $U$, all but finitely many $P_K$ act trivially.
\end{lemma}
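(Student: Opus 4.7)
The plan is to reduce the vanishing of $\rho(P_K)$ for $|K|$ large to a single Witt-type subalgebra inside $\cP$ and then propagate the vanishing across each graded component $\hat\cP_n := \mathrm{span}\{P_K \mid |K|=n\}$.

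First, I consider the subspace $\mathfrak{w} := \mathrm{span}\{P_{n\vep_1} : n \geq 1\}$. Specialising (\ref{comm_PK-PS-1})--(\ref{comm_PK-PS-3}) to $K = m\vep_1$, $S = n\vep_1$ (the off-diagonal $\sum_{i\neq j}K_iS_j$ term in (\ref{comm_PK-PS-1}) vanishes since only one component of $K$ or $S$ is non-zero), I obtain
\[
[P_{m\vep_1}, P_{n\vep_1}] \;=\; (n-m)\langle \rho, \vep_1 \rangle P_{(m+n-1)\vep_1}\qquad (m,n \geq 1),
\]
so $\mathfrak{w}$ is a Lie subalgebra of $\cP$ isomorphic, after rescaling and thanks to $\langle\rho, \vep_1\rangle \neq 0$ (a consequence of $(\cC)$), to the positive half $\bW^{\geq 0}$ of the Witt algebra, with $P_{\vep_1}$ in the role of the degree element. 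Since $U$ is finite-dimensional, $h := \rho(P_{\vep_1})$ has finite spectrum, hence so does $\mathrm{ad}(h)$ on $\End(U)$. The above relation makes $\rho(P_{n\vep_1})$ an $\mathrm{ad}(h)$-eigenvector of eigenvalue $(n-1)\langle\rho, \vep_1\rangle$; for $n$ sufficiently large this eigenvalue escapes the spectrum of $\mathrm{ad}(h)$, so there exists $n_1$ with $\rho(P_{n\vep_1}) = 0$ for all $n \geq n_1$.

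Next, I spread this vanishing to all of $\hat\cP_n$. By (\ref{comm_PK-PS-2}) and centrality of $P_0$, the subspace $\mathfrak{g} := \mathrm{span}\{P_0, P_{\vep_1}, \ldots, P_{\vep_N}\}$ is a Lie subalgebra of $\cP$, and by (\ref{comm_PK-PS-3}) each $\hat\cP_n$ with $n \geq 2$ is stable under $\mathrm{ad}(\mathfrak{g})$. The key claim is the cyclic presentation
\[
\hat\cP_n \;=\; \mathrm{span}\bigl\{ \mathrm{ad}(a_1)\cdots \mathrm{ad}(a_r)\, P_{n\vep_1} \;:\; a_i \in \mathfrak{g},\; r \geq 0 \bigr\}.
\]
I prove this by induction on $d(K) := n - K_1$: for $d(K) > 0$, pick $j \neq 1$ with $K_j \geq 1$ and set $K' := K - \vep_j + \vep_1$, so $d(K') = d(K) - 1$. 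Expanding $[P_{\vep_j}, P_{K'}]$ via (\ref{comm_PK-PS-3}) isolates $P_K$ with the non-vanishing coefficient $\langle \rho, \vep_1 \rangle (K_1 + 1)$, together with scalar multiples of $P_{K'}$ and of terms $P_{K'+\vep_j-\vep_k}$ for $k \notin \{1,j\}$, each of strictly smaller $d$-value and hence lying in the $\mathrm{ad}(\mathfrak{g})$-orbit of $P_{n\vep_1}$ by the inductive hypothesis. Solving for $P_K$ closes the induction.

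Combining the two parts: for $n \geq n_1$, the vanishing of $\rho(P_{n\vep_1})$ together with the cyclic presentation forces every iterated commutator $\mathrm{ad}(\rho(a_1))\cdots \mathrm{ad}(\rho(a_r))\rho(P_{n\vep_1})$ to vanish in $\End(U)$, so $\rho(\hat\cP_n) = 0$. Hence $\rho(P_K) = 0$ for every $K$ with $|K| \geq n_1$, and only the finitely many $P_K$ with $|K| < n_1$ may act non-trivially. I expect the main technical point to be the cyclic generation claim: choosing the pivot $P_{n\vep_1}$ and the distance function $d(K)$ so that the inductive formula has a non-vanishing leading coefficient is exactly where condition $(\cC)$ (via $\langle\rho,\vep_1\rangle \neq 0$) is needed.
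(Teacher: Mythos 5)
Your proof is correct, and it takes a genuinely different route from the one in the paper. The paper appeals to its Lemma~\ref{eigen} (the generalized-eigenspace vanishing statement, a variant of Lemma~3.4 of \cite{B}) applied to each $\ad(P_{\vep_i})$; via \eqref{gen-eig}, which places $P_S$ in a sum of generalized eigenspaces with eigenvalues at least $(s_i - 1)\langle\rho,\vep_i\rangle$, it bounds each coordinate $s_i$ of $S$ separately and hence $|S|$. You instead restrict to the single Witt-type line $\Span\{P_{n\vep_1}: n\geq 1\}$, on which $\ad(P_{\vep_1})$ acts genuinely diagonally, so the eigenvalue-escapes-spectrum argument needs only the elementary version for honest (not generalized) eigenvectors; you then transport the vanishing across each slice $\hat{\cP}_n = \Span\{P_K: |K|=n\}$ by your cyclic-generation induction on $d(K) = n - K_1$, whose pivotal nonvanishing coefficient $\langle\rho,\vep_1\rangle(K_1+1)$ is indeed where $(\cC)$ enters. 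The paper's route is shorter once Lemma~\ref{eigen} is granted; yours is more self-contained, avoiding generalized eigenspaces entirely in exchange for the extra combinatorial lemma that $\hat{\cP}_n$ is $\ad$-cyclic over $\Span\{P_0, P_{\vep_1}, \ldots, P_{\vep_N}\}$ with generator $P_{n\vep_1}$. Both arguments invoke condition $(\cC)$ only through $\langle\rho,\vep_i\rangle \neq 0$.
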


 The proof of this fact is based on the following Lemma, which is a minor modification of Lemma 3.4 from \cite{B}:
\begin{lemma} \label{eigen}
Let $\theta: \cL \rightarrow \End (U)$ be a finite-dimensional representation of an infinite-dimensional Lie algebra $\cL$. For an element $x \in \cL$
and $a \in \C$ let
\begin{equation*}
\cL_{x} (a) = \mathop\cup\limits_{k=1}^\infty \Ker \left( \ad (x) - a \cdot \id \right)^k 
\end{equation*}
be the generalized eigenspace of $\ad (x)$. Then $\theta(\cL_{x}(a)) = 0$ for all
but finitely many $a$.
\end{lemma}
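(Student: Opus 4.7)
My plan is to transport the generalized eigenspace decomposition from $\cL$ to $\End(U)$ via the homomorphism $\theta$, and then use finite-dimensionality of $\End(U)$ to finish.

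First I would observe that because $\theta$ is a Lie algebra homomorphism, for any $x,y \in \cL$ we have $\theta(\ad(x)(y)) = [\theta(x),\theta(y)] = \ad(\theta(x))\bigl(\theta(y)\bigr)$. A trivial induction on $k$ then yields
\begin{equation*}
\theta\bigl((\ad(x) - a \cdot \id)^k y\bigr) = (\ad(\theta(x)) - a \cdot \id)^k \theta(y)
\end{equation*}
for every $k \geq 1$, every $a \in \C$, and every $y \in \cL$. Consequently, if $y \in \cL_x(a)$, then $\theta(y)$ lies in the generalized $a$-eigenspace of $\ad(\theta(x))$ acting on $\End(U)$.

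Next, since $U$ is finite-dimensional, so is $\End(U)$, and the operator $\ad(\theta(x))$ on $\End(U)$ therefore has only finitely many generalized eigenvalues $a_1,\ldots,a_r$. For any $a \notin \{a_1,\ldots,a_r\}$, the generalized $a$-eigenspace of $\ad(\theta(x))$ on $\End(U)$ is zero, so $\theta(\cL_x(a)) = 0$. This proves that $\theta(\cL_x(a))$ can be non-zero only for finitely many values of $a$, as required.

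There is no real obstacle here; the only point that needs care is to verify the intertwining of $\ad$ with $\theta$, which is an immediate consequence of $\theta$ being a Lie algebra morphism, and to note that the definition of the generalized eigenspace through the nested kernels of $(\ad(x) - a \cdot \id)^k$ is compatible with this intertwining regardless of whether $\cL$ is finite- or infinite-dimensional.
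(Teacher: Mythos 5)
Your proof is correct, and it is essentially the standard argument: transport the nilpotency relation $(\ad(x)-a\,\id)^k y = 0$ through $\theta$ to get $(\ad(\theta(x))-a\,\id)^k\theta(y)=0$ in the finite-dimensional space $\End(U)$, then note that $\ad(\theta(x))$ has only finitely many eigenvalues there. The paper gives no proof of this lemma, deferring to Lemma 3.4 of \cite{B}, which relies on the same intertwining-and-finite-dimensionality idea; so your argument matches the intended approach.
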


 We can use this result to prove Lemma \ref{most-vanish}. Indeed, it follows from (\ref{comm_PK-PS-3}) that 
\begin{equation} \label{gen-eig}
 P_S \in \mathop\oplus_{k=0}^\infty \cP_{P_{\vep_i}} ((s_i + k - 1) 
\langle \rho, \vep_i \rangle) .
\end{equation}
Then Lemma \ref{eigen} implies that there exists a constant $m \in \N$ such that $P_S$ will act as zero on $U$ for all $S$ with $|S| \geq m$.

\begin{thm} \label{equiv-cat}
There is an equivalence of categories of cuspidal $\cA\cV_\pi$-modules
supported on a coset $\beta + \pi(\Lambda)$
and finite-dimensional $\cP$-modules. 
\end{thm}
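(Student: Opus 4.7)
The plan is to construct a pair of inverse functors $F$ and $G$ between the two categories and verify the required naturalities.

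For the forward functor $F$, given a cuspidal $\cA\cV_\pi$-module $T$ supported on the coset $\beta + \pi(\Lambda)$, I would first observe that each $L^\cA_{\lambda-\rho}: T_\alpha \to T_{\alpha+\lambda}$ is invertible (with inverse $L^\cA_{-\lambda-\rho}$, since $L_{\lambda-\rho}\cdot L_{-\lambda-\rho}=L_{-\rho}$ acts as the identity). Hence all homogeneous components are isomorphic, $T$ is a free $\cA_\pi$-module, and $T \cong \cA_\pi \otimes U$ with $U = T_\beta$ finite-dimensional by cuspidality. Set $F(T) := U$, endowed with the operators $D(\lambda) = L^\cA_{-\lambda-\rho}\circ L^\cV_\lambda$. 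By Lemma \ref{lemma_D-poly}, $D(\lambda)$ is polynomial in $\lambda_1,\dots,\lambda_N$, so we may extract its coefficients $P_K$ as in (\ref{D-expansion}). Comparing coefficients in the expansion of $[D(\lambda),D(\mu)]$ using Lemma \ref{lemma_comm-D} yields precisely the relations (\ref{comm_PK-PS-1})--(\ref{comm_PK-PS-3}), so $U$ becomes a finite-dimensional $\cP$-module. On morphisms, since any $\cA\cV_\pi$-morphism $\varphi: T\to T'$ is graded and $\cA_\pi$-linear, it is determined by $\varphi|_{T_\beta}$, and this restriction manifestly intertwines $D(\lambda)$, hence also each $P_K$.

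For the backward functor $G$, given a finite-dimensional $\cP$-module $U$, by Lemma \ref{most-vanish} only finitely many $P_K$ act nontrivially, so the series (\ref{D-expansion}) is in fact a finite sum and defines an operator-valued polynomial $D(\lambda)$ on $U$. I would then verify that these operators satisfy the $\DD$-commutation relation (\ref{DD}); this is essentially the reverse computation of the one producing the $\cP$-relations, that is, reassembling (\ref{comm_PK-PS-1})--(\ref{comm_PK-PS-3}) into a polynomial identity of two variables $\lambda,\mu$. Lemma \ref{recover} then produces a $\Lambda$-graded $\cA\cV_\pi$-module $G(U) := \cA_\pi \otimes U$ supported on $\beta + \pi(\Lambda)$, which is manifestly cuspidal. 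A $\cP$-morphism $U \to U'$ extends uniquely to an $\cA_\pi$-linear map $\cA_\pi \otimes U \to \cA_\pi \otimes U'$, and one checks it intertwines the $W_\pi$-action using the formula in Lemma \ref{recover}.

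Finally, for the natural isomorphisms: $FG(U) = U$ holds tautologically, because the constant term of $D(\lambda)$ on $G(U)$ is $P_0$ and extracting the coefficients recovers the original $P_K$. For $GF$, the identification $T \cong \cA_\pi \otimes T_\beta$ established above intertwines the $W_\pi$-action precisely because the formula of Lemma \ref{recover} is, by definition of $D(\lambda)$, what the original action becomes after writing $T = \cA_\pi \otimes U$. Both isomorphisms are natural in the obvious way.

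The main technical obstacle is the well-definedness of $G$: checking that the polynomial formula (\ref{D-expansion}) satisfies (\ref{DD}) whenever the $P_K$ satisfy (\ref{comm_PK-PS-1})--(\ref{comm_PK-PS-3}). The forward derivation of (\ref{comm_PK-PS-1})--(\ref{comm_PK-PS-3}) from (\ref{DD}) was done by equating coefficients of monomials $\lambda^K \mu^S$, so the reverse direction amounts to reassembling these coefficient identities into a polynomial identity; the finiteness guaranteed by Lemma \ref{most-vanish} is what makes this reassembly unproblematic, since only finitely many $P_K$ contribute and no convergence issue arises. Once this is in place, the remaining verifications are routine.
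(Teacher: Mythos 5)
Your proposal is correct and follows essentially the same route as the paper: define the forward functor by passing to the homogeneous component $U = T_\beta$ with the $\cP$-action extracted from the polynomial expansion of $D(\lambda)$ via Lemma \ref{lemma_D-poly}, and define the backward functor by reassembling a finite sum $D(\lambda)$ from the $P_K$ (finite by Lemma \ref{most-vanish}), verifying relation (\ref{DD}), and invoking Lemma \ref{recover} to produce $\cA_\pi\otimes U$. The paper's proof is terse on exactly the point you correctly flag as the main technical obstacle (that (\ref{comm_PK-PS-1})--(\ref{comm_PK-PS-3}) reassemble into (\ref{DD})), and you have filled in the surrounding details accurately.
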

\begin{proof}
We have seen above that every cuspidal $\cA\cV_\pi$-module $T$
supported on a coset $\beta + \pi(\Lambda)$ yields a representation of $\cP$
on the homogeneous component $U = T_{\beta}$. It is straightforward to check that this functor is invertible, and every finite-dimensional module $U$ of $\cP$
produces a cuspidal 
$\cA\cV_\pi$-module structure on $\cA_\pi \otimes U$. Indeed, we can recover the relation (\ref{DD}) from the Lie brackets in $\cP$ and then 
by Lemma \ref{recover} we get a $W_\pi$-module structure on $\cA_\pi \otimes U$.
% we can define the action of $W_\pi$ on $\cA_\pi \otimes U$ by $L_\lambda^\cV = L^\cA_{\lambda - \rho} \circ D(\lambda)$.
\end{proof}

% \begin{rem} \label{rem_subalg-01}
% Consider the Lie subalgebra $\fa$ of $\cP$ generated by $P_{\vep_i}$ ($1\leq i\leq N$) and $P_0$. Then, as $\langle \rho, \vep_j \rangle \vep_i - \langle \rho, \vep_i \rangle \vep_j=\langle \vep_i, \vep_j \rangle \rho$ (under the hypothesis $\C \vep_i \neq \C \vep_j$), it  follows from \eqref{comm_PK-PS-2} that 
% \begin{align*}
%  [\fa, \fa]=
%  &\mathrm{span}_{\C}\{\sum_i c_i P_{\vep_i} - P_0 \vert \sum_i c_i \pi(\vep_i)=\rho\} \\
%  =
%  &\{ \sum_i d_i P_{\vep_i}+dP_0 \vert \sum_i d_i \pi(\vep_i)+d\rho=0\}.
% \end{align*}
% In particular, this means that $[\fa, \fa]$ is codimension $2$ in $\fa$.

% One may also sees that $[\fa, [\fa, \fa]]=[\fa, \fa]$ since \eqref{comm_PK-PS-2} implies
% \[ [P_{\vep_i}, \sum_j c_j P_{\vep_j}-P_0]=-\langle \rho, \vep_i \rangle (\sum_j c_j P_{\vep_j}-P_0) \]
% for any $c_i$'s such that $\sum_j c_j \pi(\vep_j)=\rho$. This latter formula implies that $[\fa, \fa]$ is an abelian subalgebra, i.e., $\fa$ is a $2$-step solvable Lie algebra but not nilpotent. 
% \end{rem}

 Our next goal is to describe irreducible finite-dimensional representations of
$\cP$. 

% \subsection{Reduction to a finite dimensional quotient $\overline{\cL}$}

\subsection{Irreducible representations of $\cP$}

 The following Lemma is quite useful for the study of irreducible finite-dimensional representations: 
\begin{lemma}[cf. \cite{FH}, Proposition 9.17]\label{lemma_FH}
Let $U$ be an irreducible finite-dimensional representation of a finite-dimensional Lie algebra $\fg$. Then 
$$[\fg, \fg] \cap \mathrm{Rad}\, \fg$$
acts trivially on $U$, where $\mathrm{Rad}\, \fg$ stands for the solvable radical of $\fg$.
\end{lemma}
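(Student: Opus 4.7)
The plan is to deduce the statement from the standard Invariance Lemma together with a trace argument, following the classical proof in Humphreys / Fulton--Harris. Set $R = \mathrm{Rad}\,\fg$. Since $R$ is solvable and the base field is $\C$, Lie's theorem applied to the action of $R$ on $U$ produces a common eigenvector, i.e.\ a linear functional $\chi: R \to \C$ such that the weight space
\[
U_\chi := \{u \in U \mid r.u = \chi(r) u \text{ for all } r \in R\}
\]
is nonzero.

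The key step I would carry out first is the Invariance Lemma: $U_\chi$ is stable under all of $\fg$. Given $x \in \fg$ and $v \in U_\chi$, consider $W_n := \mathrm{span}\{v, xv, x^2 v, \ldots, x^{n-1}v\}$. By induction on $n$, using that $R$ is an ideal and rewriting $r \cdot x^k v = x \cdot r \cdot x^{k-1} v - [x,r] \cdot x^{k-1} v$ with $[x,r] \in R$, one shows that each $W_n$ is $R$-stable and that every $r \in R$ acts on $W_n$ by an upper-triangular matrix with all diagonal entries equal to $\chi(r)$. Let $n$ be large enough that $W_n = W_{n+1} =: W$; then $x$ also preserves $W$, so for any $r \in R$ the commutator $[x,r]$ acts on $W$ as a commutator of endomorphisms, giving $\mathrm{tr}_W([x,r]) = 0$. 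On the other hand $[x,r] \in R$, so its trace on $W$ equals $n\,\chi([x,r])$. Hence $\chi([x,r]) = 0$, and then $r.(xv) = \chi(r)(xv) - \chi([x,r]) v = \chi(r)(xv)$, so $xv \in U_\chi$.

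Once the invariance is in hand, the irreducibility of $U$ forces $U = U_\chi$, so every $r \in R$ acts on $U$ as the scalar $\chi(r)$. Now take any $r \in [\fg,\fg] \cap R$ and write $r = \sum_i [x_i, y_i]$ with $x_i, y_i \in \fg$. Computing the trace of the action on $U$,
\[
\dim(U) \cdot \chi(r) = \tr(r|_U) = \sum_i \tr\bigl([x_i|_U, y_i|_U]\bigr) = 0,
\]
so $\chi(r) = 0$ and thus $r$ acts trivially on $U$, which is what was claimed.

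The only genuinely nontrivial step is the Invariance Lemma; the inductive setup of the filtration $W_n$ and the bookkeeping that forces every $r \in R$ to act on $W_n$ with all diagonal entries equal to $\chi(r)$ is the main technical obstacle, but it is completely standard.
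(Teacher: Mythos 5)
Your proof is correct. The paper does not prove this lemma itself but cites it as Proposition 9.17 of Fulton--Harris, and your argument (Lie's theorem for $\Rad\,\fg$, the Invariance Lemma showing $U_\chi$ is $\fg$-stable, irreducibility forcing $U = U_\chi$, then the trace argument killing $\chi$ on $[\fg,\fg]\cap\Rad\,\fg$) is precisely the standard proof that reference gives.
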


\begin{lemma} \label{commutant}
For every $K \in \Z_+^N$ with $|K| \geq 2$ we have $P_K \in [\cP, \cP]$.
\end{lemma}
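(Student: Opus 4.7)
The plan is to use the commutation formula \eqref{comm_PK-PS-3} to express each $P_K$ with $|K|\geq 2$ as a commutator modulo $P_T$'s already known to lie in $[\cP,\cP]$. A key preliminary remark is that condition $(\cC)$ — applied with $\alpha=0$, $\beta=\vep_k$ — forces $\ell_k:=\langle\rho,\vep_k\rangle\neq 0$ for every $k$, so the scalar coefficients appearing below will all be invertible.

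First I would dispose of the diagonal case $K=n\vep_i$ with $n\geq 2$. Here the sum in \eqref{comm_PK-PS-3} retains only the $j=i$ contribution, yielding $[P_{\vep_i},P_{n\vep_i}]=(n-1)\ell_i P_{n\vep_i}$, so $P_{n\vep_i}\in[\cP,\cP]$ since $(n-1)\ell_i\neq 0$.

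For a general multi-index $S$ with $|S|=n\geq 2$, I would fix one index $k$ and perform a descending induction on $S_k$, with the diagonal case $S_k=n$ as the base. In the inductive step, assume $S_k<n$; choose any $j\neq k$ with $S_j\geq 1$ (which exists because $\sum_{m\neq k}S_m=n-S_k>0$) and set $T:=S+\vep_k-\vep_j$. Expanding $[P_{\vep_j},P_T]$ via \eqref{comm_PK-PS-3} and isolating the $m=j$ and $m=k$ terms from the sum produces an identity of the form
\begin{equation*}
[P_{\vep_j},P_T]=(S_j-2)\ell_j\,P_T+\ell_k(S_k+1)\,P_S+\sum_{m\neq j,k}\ell_m S_m\,P_{S+\vep_k-\vep_m}.
\end{equation*}
Crucially, $T$ and every $S+\vep_k-\vep_m$ (for $m\neq k$) have $k$-th coordinate equal to $S_k+1$, hence lie in $[\cP,\cP]$ by the induction hypothesis; the left-hand side is visibly a commutator. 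Since $\ell_k(S_k+1)\neq 0$, solving for $P_S$ yields $P_S\in[\cP,\cP]$.

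The only subtle point is choosing the right induction variable. Inducting on $|K|$ is not directly useful, because the relations \eqref{comm_PK-PS-1}--\eqref{comm_PK-PS-3} do not lower degree within the subspace spanned by the $P_K$; the trick is to induct \emph{within} the fixed-degree piece $V_n=\mathrm{span}\{P_K:|K|=n\}$ using the $k$-th coordinate of the multi-index, so that the shift operator coming from $\ad(P_{\vep_j})$ transports mass consistently toward $\vep_k$ and closes the recursion. No serious obstacle beyond careful bookkeeping in the expansion of $[P_{\vep_j},P_T]$ is anticipated, and the hypothesis $|K|\geq 2$ is essential both for the base case (to keep $(n-1)\ell_k\neq 0$) and for the applicability of \eqref{comm_PK-PS-3} to $P_T$ (which requires $|T|\geq 2$).
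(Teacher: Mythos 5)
Your proof is correct, and it takes a genuinely different route from the paper's. The paper first deduces from \eqref{gen-eig} together with the generalized eigenspace Lemma~\ref{eigen} that $P_K \in [\cP,\cP]$ whenever some coordinate satisfies $k_i \geq 2$, and then handles the remaining multilinear case $K = \vep_{i_1} + \cdots + \vep_{i_r}$ (distinct indices) by expanding the single commutator $[P_{\vep_{i_1}}, P_{2\vep_{i_2}+\vep_{i_3}+\cdots+\vep_{i_r}}]$ via \eqref{comm_PK-PS-3} and solving for the one term not already known to lie in $[\cP,\cP]$. You instead verify only the concentrated case $K=n\vep_k$ directly from \eqref{comm_PK-PS-3}, then run a descending induction on the $k$-th coordinate within each fixed degree, each step transporting mass towards $\vep_k$ by considering $[P_{\vep_j}, P_{S+\vep_k-\vep_j}]$. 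Your decomposition of $[P_{\vep_j}, P_T]$ is correct, and the coefficient $\ell_k(S_k+1)$ of $P_S$ is nonzero by your observation that $(\cC)$ with $\alpha=0$, $\beta=\vep_k$ gives $\langle\rho,\vep_k\rangle\neq 0$ --- a fact the paper uses implicitly as well. The trade-off: the paper recycles the spectral machinery already set up for Lemma~\ref{most-vanish}, making its first step a one-liner, whereas your argument is more elementary and self-contained, bypassing the eigenspace decomposition entirely in favor of a uniform within-degree induction. Both are valid; your bookkeeping (including the treatment of vanishing terms when $S_m=0$ or $T_j=0$) checks out.
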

\begin{proof}
Using \eqref{gen-eig}, we see 
that every $P_K$ with $k_i \geq 2$ is in the commutator subalgebra $[\cP, \cP]$. Let us show that for $K = \vep_{i_1} + \ldots + \vep_{i_r}$ with $r \geq 2$ 
and $i_1, \ldots, i_r$ distinct, $P_K \in [\cP, \cP]$.
Indeed, 
\begin{align*}
[P_{\vep_{i_1}}, P_{2\vep_{i_2} + \vep_{i_3} + \ldots + \vep_{i_r}}]
=& - \langle \rho, \vep_{i_1} \rangle
P_{2\vep_{i_2} + \vep_{i_3} + \ldots + \vep_{i_r}} \\
&+ 2 \langle \rho, \vep_{i_2} \rangle
P_{\vep_{i_1} + \vep_{i_2} + \ldots + \vep_{i_r}} \\
& + \langle \rho, \vep_{i_3} \rangle
P_{\vep_{i_1} + 2\vep_{i_2} + \vep_{i_4} + \ldots + \vep_{i_r}} + \ldots \\
&  + \langle \rho, \vep_{i_r} \rangle
P_{\vep_{i_1} + 2\vep_{i_2} + \vep_{i_3} + \ldots + \vep_{i_{r-1}}}.
\end{align*}
Now the left hand side, as well as all terms, except possibly the second one, in the right hand side, are in $[\cP, \cP]$. Hence, 
$P_{\vep_{i_1} + \vep_{i_2} + \ldots + \vep_{i_r}}$ is also in the commutator subalgebra.
\end{proof}

\begin{lemma} \label{PK2}
Let $U$ be a finite-dimensional irreducible $\cP$-module. Then all $P_K$ with
$|K| > 2$ act trivially on $U$.
\end{lemma}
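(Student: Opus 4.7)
The plan is to apply Lemma \ref{lemma_FH} to a suitable finite-dimensional quotient of $\cP$. By Lemma \ref{most-vanish}, fix $m \in \N$ such that $P_K$ acts as zero on $U$ for all $|K| \geq m$. Reading off the degree shifts in the commutator formulas, the subspace $J = \mathrm{span}\{P_K : |K| \geq m\}$ is a Lie ideal of $\cP$: relation (\ref{comm_PK-PS-1}) (valid for $|K|,|S|>1$) lands in degrees $|K|+|S|-1$ and $|K|+|S|-2$, relation (\ref{comm_PK-PS-3}) preserves the degree of the higher factor, and $P_0$ is central. Hence $\fg := \cP/J$ is finite dimensional, and the representation of $\cP$ on $U$ factors through an irreducible $\fg$-module structure.

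Next I would exhibit a nilpotent ideal of $\fg$ that contains every $P_K$ with $|K|>2$. Let $\fn$ denote the image in $\fg$ of $\cP^{\geq 3}:=\mathrm{span}\{P_K : |K| \geq 3\}$. The same degree bookkeeping shows $\fn$ is an ideal: for $|K|\geq 2$ and $|S|\geq 3$, the terms on the right of (\ref{comm_PK-PS-1}) have degree $\geq 3$; relation (\ref{comm_PK-PS-3}) sends $\cP^{=1}\otimes \cP^{=l}$ into $\cP^{=l}$, so $[\cP^{=1},\cP^{\geq 3}]\subset \cP^{\geq 3}$; and $P_0$ is central.

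To prove that $\fn$ is nilpotent in $\fg$, I would iterate the bound $[\cP^{\geq a},\cP^{\geq b}]\subset \cP^{\geq a+b-2}$ (for $a,b\geq 2$) coming from (\ref{comm_PK-PS-1}). Setting $\fn^1=\fn$ and $\fn^{k+1}=[\fn,\fn^k]$, induction gives $\fn^{k+1}\subset \overline{\cP^{\geq k+3}}$, and this image vanishes in $\fg$ as soon as $k+3\geq m$. Hence $\fn$ is nilpotent and in particular $\fn\subset \mathrm{Rad}\,\fg$.

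To finish, Lemma \ref{commutant} tells us $P_K\in [\cP,\cP]$ for $|K|\geq 2$, so the image of any $P_K$ with $|K|>2$ lies in $[\fg,\fg]\cap \mathrm{Rad}\,\fg$. Lemma \ref{lemma_FH} then forces $P_K$ to act as zero on $U$. The only delicate point is the bookkeeping of degree shifts, since the relation (\ref{comm_PK-PS-3}) for $|K|=1$ has different degree behavior than (\ref{comm_PK-PS-1}); this is what forces us to cut the ideal at $|K|\geq 3$ rather than $|K|\geq 2$, and it is also why the lemma only concludes triviality for $|K|>2$.
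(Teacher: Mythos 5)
Your proof is correct and follows essentially the same route as the paper: pass to the finite-dimensional quotient via Lemma \ref{most-vanish}, observe from \eqref{comm_PK-PS-1} and \eqref{comm_PK-PS-3} that the span of $P_K$ with $|K|\geq 3$ becomes a solvable ideal in that quotient, and then invoke Lemma \ref{commutant} together with Lemma \ref{lemma_FH}. The only (minor) deviation is that you verify nilpotency explicitly via the lower-central-series estimate $\fn^{k+1}\subset\overline{\cP^{\geq k+3}}$, whereas the paper simply asserts solvability; both suffice for the conclusion, and your extra computation is sound.
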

\begin{proof}
First, by Lemma \ref{most-vanish}, every finite-dimensional representation of 
$\cP$ factors through a quotient
\begin{equation*}
\cP^\prime = \cP / \Span \{ P_S \, \big| \, |S| \geq m \}. 
\end{equation*}
for some $m \in \N$. We can immediately see from \eqref{comm_PK-PS-1} and
\eqref{comm_PK-PS-3} that
elements $P_K$ with $|K| > 2$ form a solvable ideal in $\cP^\prime$ and hence belong to the solvable radical. Combining this with Lemma \ref{commutant}, we conclude that all $P_K$ with $|K| > 2$ belong to
$[\cP^\prime, \cP^\prime] \cap \Rad \, \cP^\prime$ and hence vanish on $U$ according to Lemma \ref{lemma_FH}.  
\end{proof}

Hence, an irreducible finite-dimensional $\cP$-module $U$ induces an irreducible finite-dimensional $\overline{\cP}$-module structure on $U$, where $\overline{\cP}=\cP/ \bigoplus_{\vert K\vert>2} \C P_K$. The image of $P_K$ in $\overline{\cP}$ for $\vert K\vert \leq 2$ will be denoted by the same letter.

Let us determine the reductive quotient $\lcP / (\Rad \, \lcP \cap [\lcP, \lcP])$.

First of all, we note that $\lcP$ is a semidirect product, $\lcP = \lcP_1 \ltimes \lcP_2$, where the subalgebra $\lcP_1$ is spanned by $P_K$ with $|K| = 0, 1$, and the ideal
$\lcP_2$ is spanned by $P_K$ with $|K| = 2$.

% We fix some $\rho^\dagger \in \C^2$ such that $\langle \rho, \rho^\dagger \rangle = 1$.

\begin{lemma} \label{P1}
Lie algebra $\lcP_1$ is a solvable Lie algebra with an abelian commutator subalgebra.
The map $\tau : \lcP_1 \rightarrow V \cong \C^2 $, given by
\begin{align*}
\tau(P_{\vep_i}) &= \vep_i \\
\tau(P_0) &= \rho,
\end{align*}
has $[\lcP_1, \lcP_1]$ as its kernel and induces an isomorphism between
the quotient $\lcP_1 / [\lcP_1, \lcP_1]$  and a 2-dimensional abelian Lie algebra $V$.
\end{lemma}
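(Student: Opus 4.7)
The plan is to establish the four facts in order: (a) $\tau$ annihilates commutators and therefore descends to the quotient, (b) $\tau$ is surjective, (c) $[\lcP_1,\lcP_1]$ has the right dimension, so the descended map is a linear isomorphism, and (d) $[\lcP_1,\lcP_1]$ is abelian, which then gives solvability of $\lcP_1$ for free.

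For (a), I would apply $\tau$ to relation \eqref{comm_PK-PS-2} to get
\[
\tau([P_{\vep_i},P_{\vep_j}]) = \langle\rho,\vep_j\rangle\vep_i - \langle\rho,\vep_i\rangle\vep_j - \langle\vep_i,\vep_j\rangle\rho,
\]
and invoke the two-dimensional identity $\langle u,v\rangle w+\langle v,w\rangle u+\langle w,u\rangle v=0$ valid for any $u,v,w\in\C^2$ (a direct consequence of $\dim_\C \C^2 = 2$) with $u=\rho$, $v=\vep_j$, $w=\vep_i$. Since $P_0$ is central, these are all the commutators, so $\tau([\lcP_1,\lcP_1])=0$. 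For (b), the image contains $\rho=\tau(P_0)$ and all $\vep_i=\tau(P_{\vep_i})$; by assumption iii) the latter span $\C^2$, so $\tau$ is surjective onto $V$.

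For (c), condition $(\cC)$ applied with $\alpha=0$, $\beta=\vep_i$ gives $\rho_i:=\langle\rho,\vep_i\rangle\neq 0$ for every $i$. The elements
\[
c_{iN} := [P_{\vep_i},P_{\vep_N}] = \rho_N P_{\vep_i} - \rho_i P_{\vep_N} - \langle\vep_i,\vep_N\rangle P_0, \qquad 1\le i\le N-1,
\]
are linearly independent because their $P_{\vep_1},\ldots,P_{\vep_{N-1}}$ components form a diagonal matrix with nonzero diagonal entry $\rho_N$. Hence $\dim[\lcP_1,\lcP_1]\ge N-1$. On the other hand $\lcP_1$ has basis $\{P_0,P_{\vep_1},\ldots,P_{\vep_N}\}$, so $\dim\ker\tau=N+1-2=N-1$. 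Combined with the inclusion $[\lcP_1,\lcP_1]\subseteq\ker\tau$ established in (a), this forces equality, and the induced map $\lcP_1/[\lcP_1,\lcP_1]\to V$ is an isomorphism.

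The main work is (d). I would compute $[c_{ij},c_{kl}]$ directly: centrality of $P_0$ reduces this to a linear combination of brackets $[P_{\vep_a},P_{\vep_b}]=c_{ab}$, and after expanding with $\sigma_{ab}:=\langle\vep_a,\vep_b\rangle$ the $P_{\vep_*}$ terms cancel pairwise, leaving the $P_0$-coefficient
\[
\rho_j(\rho_k\sigma_{il}-\rho_l\sigma_{ik}) + \rho_i(\rho_l\sigma_{jk}-\rho_k\sigma_{jl}).
\]
Pairing the Plücker identity with $\vep_a$ yields the three-term relation $\rho_a\sigma_{bc}-\rho_b\sigma_{ac}+\rho_c\sigma_{ab}=0$. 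Applied to each inner bracket this gives $\rho_k\sigma_{il}-\rho_l\sigma_{ik}=\rho_i\sigma_{kl}$ and $\rho_l\sigma_{jk}-\rho_k\sigma_{jl}=-\rho_j\sigma_{kl}$, so the whole expression vanishes. The commutator subalgebra is thus abelian, the derived series terminates after two steps, and $\lcP_1$ is solvable, completing the lemma. The subtle point, and the only real obstacle, is the $P_0$ cancellation in (d), which is where the two-dimensionality of the symplectic target $\C^2$ enters in an essential way.
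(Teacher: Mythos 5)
Your proof is correct and follows the same strategy as the paper: use the two-dimensional Plücker-type identity in $\C^2$ to show $\tau$ kills commutators, then argue about the kernel, abelianness, and surjectivity. The paper dispatches the last three points with "easy to see", "straightforward", and "obvious"; your dimension count (using $(\cC)$ to get $\langle\rho,\vep_i\rangle\neq 0$) and the explicit $P_0$-coefficient cancellation via the three-term relation $\rho_a\sigma_{bc}-\rho_b\sigma_{ac}+\rho_c\sigma_{ab}=0$ are clean, correct ways to fill in exactly those omissions.
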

\begin{proof}
Using \eqref{comm_PK-PS-2}, we can compute
\begin{align*}
\tau ( [P_{\vep_i}, P_{\vep_j} ]) &= 
\langle \rho, \vep_j \rangle  \vep_i
- \langle \rho, \vep_i \rangle  \vep_j 
- \langle \vep_i, \vep_j \rangle \rho = 0.
\end{align*}
Here we used the fact that
$$\langle \rho, \vep_j \rangle \vep_i - \langle \rho, \vep_i \rangle \vep_j=\langle \vep_i, \vep_j \rangle \rho.$$
This tells us that $\tau$ is a homomorphism of Lie algebras with $[\lcP_1 ,\lcP_1] \subset \Ker \tau$. In fact, it is easy to see that we actually have 
the equality
$[\lcP_1 ,\lcP_1] = \Ker \tau$. 
It is straightforward to verify that $[\lcP_1 ,\lcP_1]$ is abelian. 
It is also obvious that $\tau$ is surjective.
\end{proof}

Consider the space $\C^N = \C \otimes_\Z \Lambda$ with a skew symmetric bilinear form $\langle \cdot \vert \cdot \rangle$ induced from $\pi(\Lambda) \subset \C^2$. Then the space $S^2 \C^N$ has a Lie algebra structure coming from the Poisson bracket on $S^\bullet \C^N$.

\begin{lemma} \label{P2CN}
There is an isomorphism of Lie algebras $\eta:  \lcP_2 \rightarrow S^2 \C^N$, given by
\begin{equation*}
\eta(P_{\vep_i + \vep_j}) = \vep_i \vep_j .
\end{equation*}
\end{lemma}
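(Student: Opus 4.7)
The plan is to verify that $\eta$ is a well-defined linear isomorphism and then check directly that it intertwines the bracket in $\lcP_2$ with the Poisson bracket on $S^2\C^N$.

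First I would note that $\eta$ is well-defined on the spanning set: since $P_{\vep_i+\vep_j}=P_{\vep_j+\vep_i}$ and $\vep_i\vep_j=\vep_j\vep_i$, the assignment extends to a linear map. The elements $\{P_{\vep_i+\vep_j}\}_{1\le i\le j\le N}$ form a basis of $\lcP_2$ and $\{\vep_i\vep_j\}_{1\le i\le j\le N}$ form a basis of $S^2\C^N$, so $\eta$ is a linear bijection.

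Next I would verify the bracket compatibility. Apply \eqref{comm_PK-PS-1} with $K=\vep_a+\vep_b$ and $S=\vep_c+\vep_d$ (any $a,b,c,d\in\{1,\ldots,N\}$). The second sum $\sum_i (S_i-K_i)\langle\rho,\vep_i\rangle P_{K+S-\vep_i}$ has $|K+S-\vep_i|=3$, so it vanishes in the quotient $\lcP$, hence in $\lcP_2$. For the first sum, expanding $K_i=\delta_{ia}+\delta_{ib}$ and $S_j=\delta_{jc}+\delta_{jd}$ and using that $\langle\vep_i,\vep_i\rangle=0$ so the constraint $i\neq j$ can be dropped, one obtains
\begin{align*}
[P_{\vep_a+\vep_b},P_{\vep_c+\vep_d}]
&=\langle\vep_a,\vep_c\rangle P_{\vep_b+\vep_d}+\langle\vep_a,\vep_d\rangle P_{\vep_b+\vep_c}\\
&\quad+\langle\vep_b,\vep_c\rangle P_{\vep_a+\vep_d}+\langle\vep_b,\vep_d\rangle P_{\vep_a+\vep_c}.
\end{align*}
On the Poisson side, the Leibniz rule together with $\{\vep_i,\vep_j\}=\langle\vep_i|\vep_j\rangle\cdot 1$ gives
\begin{align*}
\{\vep_a\vep_b,\vep_c\vep_d\}
&=\langle\vep_a|\vep_c\rangle\vep_b\vep_d+\langle\vep_a|\vep_d\rangle\vep_b\vep_c\\
&\quad+\langle\vep_b|\vep_c\rangle\vep_a\vep_d+\langle\vep_b|\vep_d\rangle\vep_a\vep_c.
\end{align*}
Since the symplectic form on $\C^N$ used in $S^2\C^N$ is the one induced from $\pi(\Lambda)\subset\C^2$, the two expressions agree term-by-term under $\eta$, proving that $\eta$ is a Lie algebra homomorphism.

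There is no real obstacle here: the main work is a careful bookkeeping of the Iverson-style constraint $i\ne j$ in \eqref{comm_PK-PS-1}, which is harmless because of the skew-symmetry of $\langle\cdot,\cdot\rangle$, and the observation that all cubic terms $P_{K+S-\vep_i}$ are killed in $\lcP_2$. Together with the dimension count, this establishes that $\eta$ is an isomorphism of Lie algebras.
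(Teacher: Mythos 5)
Your proof is correct and follows the same route as the paper: both derive the bracket on $\lcP_2$ from \eqref{comm_PK-PS-1}, observe that the cubic terms vanish in the quotient, and match it against the Poisson bracket on $S^2\C^N$. You simply spell out the bookkeeping (the basis correspondence, dropping the $i\neq j$ constraint by skew-symmetry, the degree count) that the paper compresses into ``This follows immediately.''
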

\begin{proof}
This follows immediately from the Lie bracket \eqref{comm_PK-PS-1}:
\begin{align*}
[P_{\vep_a + \vep_b}, &P_{\vep_c + \vep_d} ] \\
&= \langle \vep_a \vert \vep_c \rangle P_{\vep_b + \vep_d}
+ \langle \vep_a \vert \vep_d \rangle P_{\vep_b + \vep_c}
+ \langle \vep_b \vert \vep_c \rangle P_{\vep_a + \vep_d}
+ \langle \vep_b \vert \vep_d \rangle P_{\vep_a + \vep_c}.
\end{align*}
\end{proof}

\begin{lemma} \label{P2}
There is a homomorphism $\pi_*: S^2 \C^N \rightarrow S^2 \C^2 \cong \fsl_2$, given
by
\begin{equation*}
\pi_* (\lambda \mu) = \pi(\lambda) \pi(\mu), \text{ \ for \ } 
\lambda, \mu \in \C^N.
\end{equation*}
The kernel of this homomorphism is a solvable ideal in $S^2 \C^N$.
\end{lemma}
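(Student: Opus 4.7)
The plan is to first verify that $\pi_*$ is a well-defined Lie algebra homomorphism, then identify its kernel explicitly, and finally exhibit a short derived series for that kernel.

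For well-definedness, I extend $\pi:\Lambda\to\C^2$ to the $\C$-linear map $\C^N=\C\otimes_\Z\Lambda\to\C^2$ (still called $\pi$) and let $\pi_*$ be its symmetric square. The skew form on $\C^N$ is by construction the pullback of that on $\C^2$, i.e.\ $\langle\lambda\vert\mu\rangle=\langle\pi(\lambda),\pi(\mu)\rangle$ for all $\lambda,\mu\in\C^N$. Since the Poisson bracket on each of $S^2\C^N$ and $S^2\C^2\cong\fsl_2$ is determined by this form together with the Leibniz rule, a term-by-term expansion of $\{\lambda\mu,\nu\sigma\}$ shows that $\pi_*$ preserves the bracket. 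Being the kernel of a Lie algebra homomorphism, $\Ker\pi_*$ is then automatically an ideal.

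Next I identify $\Ker\pi_*$. Assumption (iii) forces $\pi(\Lambda)$ to span $\C^2$, so $\pi:\C^N\to\C^2$ is surjective. Fix $W=\Ker\pi$ and a linear complement $W'$; the decomposition $S^2\C^N=S^2W\oplus(W\cdot W')\oplus S^2W'$ makes it plain that $\pi_*$ restricts to an isomorphism on $S^2W'$ and vanishes on the first two summands, so $\Ker\pi_*=S^2W\oplus(W\cdot W')$.

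For solvability, the key input is that $W$ lies in the radical of the form on $\C^N$: for $w\in W$ and any $\lambda\in\C^N$, $\langle w\vert\lambda\rangle=\langle\pi(w),\pi(\lambda)\rangle=0$. Expanding any bracket $\{uv,xy\}$ with $u,v,x,y\in\C^N$ via Leibniz, this vanishing eliminates every term whose scalar coefficient involves an element of $W$; the resulting case analysis yields $[\Ker\pi_*,\Ker\pi_*]\subset S^2W$ and $[S^2W,\Ker\pi_*]=0$. Thus $S^2W$ is central in $\Ker\pi_*$ and the derived series terminates after two steps, proving solvability. The only real obstacle will be bookkeeping the Leibniz expansion cleanly; conceptually everything is controlled by the single observation that $W$ is the radical of the pulled-back form, which is immediate from the surjectivity of $\pi$ onto the symplectic plane $(\C^2,\langle\cdot,\cdot\rangle)$.
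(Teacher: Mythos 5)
Your proposal is correct and follows essentially the same route as the paper: you both observe that $\Ker\pi$ lies in the radical of the pulled-back form, deduce from the bracket formula that $[\Ker\pi_*,\Ker\pi_*]$ lands in $S^2(\Ker\pi)$ (the paper's $Z$), and note that this subspace is central, giving solvability in two steps. The only cosmetic difference is that you write out the explicit direct-sum decomposition of $S^2\C^N$ relative to a complement of $\Ker\pi$, whereas the paper works directly with the spanning monomials.
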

\begin{proof}
The Lie bracket in $S^2 \C^N$ is 
\begin{equation} \label{S2CN}
[\alpha \beta, \gamma \delta] = 
\langle \alpha \vert \gamma \rangle \beta \delta
+ \langle \alpha \vert \delta \rangle \beta \gamma
+ \langle \beta \vert \gamma \rangle \alpha \delta
+ \langle \beta \vert \delta \rangle \alpha \gamma.
\end{equation}
Since by definition $\langle \alpha \vert \gamma \rangle = 
\langle \pi(\alpha), \pi(\gamma) \rangle$,
the map $\pi_*$ is a homomorphism of Lie algebras.
The kernel of $\pi_*$ is spanned by $\alpha \beta$ with $\beta \in \Ker \pi$.
Consider also the subspace $Z \subset S^2 \C^N$ spanned by 
$\alpha \beta$ with both $\alpha, \beta \in \Ker \pi$. We immediately see 
from \eqref{S2CN} that $Z$ is central in $S^2 \C^N$. Moreover, it follows from
\eqref{S2CN} that $[ \Ker \pi_*, \Ker \pi_*] \subset Z$. Hence
$\Ker \pi_*$ is a solvable ideal in $S^2 \C^N$, as claimed.
\end{proof}

Identifying $\lcP_2$ with $S^2 \C^N$, we can rewrite the Lie bracket 
\eqref{comm_PK-PS-3} in the following way:
\begin{equation} \label{P2S2}
[P_{\vep_i}, \lambda \mu] = - \langle \rho ,\pi(\vep_i) \rangle \lambda \mu
+ \langle \rho, \pi(\lambda) \rangle \vep_i  \mu
+ \langle \rho, \pi(\mu) \rangle \vep_i  \lambda .
\end{equation}

It follows immediately from this formula that the pull-back of $\Ker \pi_*$ to
$\lcP_2$ is an ideal in $\lcP$. Since this ideal is solvable and belongs to
$[\lcP, \lcP]$ by Lemma \ref{commutant}, it will vanish in every finite-dimensional irreducible representation of $\lcP$. Hence irreducible representations of $\lcP$ factor through the quotient
\begin{equation*}
\lcP_1 \ltimes S^2 \C^2 .
\end{equation*}
The action of $\lcP_1$ on $S^2 \C^2$ defines the homomorphism $\lcP_1 \rightarrow \Der (S^2 \C^2)$. Since $S^2 \C^2$ is isomorphic to $\fsl_2$, we
have $\Der (S^2 \C^2) \cong S^2 \C^2$, and we obtain a homomorphism
\begin{equation*}
\varphi: \ \lcP_1 \rightarrow S^2 \C^2.
\end{equation*}
\begin{lemma} \label{phi-map}
The map $\varphi$ is given by the formulas: 
$$\varphi(P_{\vep_i}) = 
\frac{1}{2} \pi(\vep_i)\rho, \ \ \varphi(P_0) = 0.$$
\end{lemma}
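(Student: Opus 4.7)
The plan is to work directly from the bracket formula \eqref{P2S2}, push it forward through $\pi_*$ to obtain the action of $\overline{\cP}_1$ on $S^2\C^2$ by derivations, and then identify this derivation with the adjoint action of an explicit element of $S^2\C^2$. Since $S^2\C^2\cong \fsl_2$ is centerless, the injection $\ad: S^2\C^2 \hookrightarrow \Der(S^2\C^2)$ is an isomorphism, so $\varphi$ is determined by matching derivations.

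For $\varphi(P_0)=0$: the remark following \eqref{D-expansion} records that $P_0$ is central in $\cP$, so it acts trivially on $\overline{\cP}_2 \cong S^2\C^N$, and hence on the quotient $S^2\C^2$. Injectivity of $\ad$ on $S^2\C^2$ then forces $\varphi(P_0)=0$.

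For $\varphi(P_{\vep_i})$: push the action \eqref{P2S2} to $S^2\C^2$ via $\eta$ and $\pi_*$ to obtain
\begin{equation*}
P_{\vep_i}\cdot \pi(\lambda)\pi(\mu) = -\langle \rho,\pi(\vep_i)\rangle\,\pi(\lambda)\pi(\mu) + \langle \rho,\pi(\lambda)\rangle\,\pi(\vep_i)\pi(\mu) + \langle \rho,\pi(\mu)\rangle\,\pi(\vep_i)\pi(\lambda).
\end{equation*}
Separately, compute $\bigl[\tfrac{1}{2}\pi(\vep_i)\rho,\, \pi(\lambda)\pi(\mu)\bigr]$ in $S^2\C^2$ using the Poisson bracket \eqref{S2CN}; this yields four terms, two of which already match the last two terms of the display above (after absorbing the factor $\tfrac12$ into an extra factor of $2$ that will appear after polarization). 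To match the remaining terms, I invoke the identity
\begin{equation*}
\langle u,v\rangle\, w + \langle v,w\rangle\, u + \langle w,u\rangle\, v = 0 \qquad \text{for } u,v,w\in\C^2,
\end{equation*}
which holds because any three vectors in a two-dimensional symplectic space are linearly dependent (this is the same identity already exploited in the proof of Lemma~\ref{P1}). Applying it to the triples $(\rho,\pi(\vep_i),\pi(\lambda))$ and $(\rho,\pi(\vep_i),\pi(\mu))$, multiplied on the right by $\pi(\mu)$ and $\pi(\lambda)$ respectively, converts the unwanted terms $\langle \pi(\vep_i),\pi(\lambda)\rangle \rho\pi(\mu)$ and $\langle \pi(\vep_i),\pi(\mu)\rangle \rho\pi(\lambda)$ into the combination $-\langle\rho,\pi(\vep_i)\rangle\pi(\lambda)\pi(\mu)$ plus half of the surviving terms. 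A short arithmetic check completes the match.

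The only real obstacle is keeping the bookkeeping of symplectic brackets straight; the conceptual content lies entirely in the two-dimensional three-term symplectic identity, which is what forces the precise coefficient $\tfrac12$. Once that identity is applied, the equality $\ad(\tfrac12\pi(\vep_i)\rho) = \theta(P_{\vep_i})$ on a spanning set $\{\pi(\lambda)\pi(\mu)\}$ of $S^2\C^2$ extends by linearity, completing the proof.
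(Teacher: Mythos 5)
Your proof is correct and follows essentially the same route as the paper: compute $\tfrac{1}{2}\,\ad(\pi(\vep_i)\rho)$ on a spanning set $\pi(\lambda)\pi(\mu)$ of $S^2\C^2$, compare with the action of $P_{\vep_i}$ coming from \eqref{P2S2}, and close the gap with the three-term symplectic identity in two dimensions (the same one already invoked in Lemma~\ref{P1}); centrality of $P_0$ together with injectivity of $\ad$ on the centerless $\fsl_2\cong S^2\C^2$ handles $\varphi(P_0)=0$. The parenthetical about "absorbing the factor $\tfrac12$ after polarization" is a bit murky, but the subsequent application of the identity to the pairs $(\rho,\pi(\vep_i),\pi(\lambda))$ and $(\rho,\pi(\vep_i),\pi(\mu))$ does produce exactly the missing halves, so the bookkeeping closes.
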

\begin{proof}
It is sufficient to calculate on each ${\lambda \mu}$ for $\lambda, \mu \in \C^2$: by \eqref{comm_PK-PS-1}, \eqref{P2S2}
\begin{align*}
&\frac{1}{2}[{\pi(\vep_i) \rho}, {\lambda \mu}]-[{P_{\vep_i}}, {\lambda \mu}] \\
=
&\frac{1}{2}(\langle \pi(\vep_i), \lambda \rangle {\rho \mu}+\langle \pi(\vep_i), \mu \rangle {\rho \lambda}+\langle \rho, \lambda \rangle {\pi(\vep_i) \mu}+\langle \rho, \mu \rangle {\pi(\vep_i) \lambda}) \\
&-(-\langle \rho, \pi(\vep_i)\rangle {\lambda \mu}+\langle \rho, \lambda\rangle {\pi(\vep_i) \mu}+\langle \rho, \mu \rangle {\pi(\vep_i) \lambda}) \\
=
&\frac{1}{2}(\langle \pi(\vep_i), \lambda \rangle {\rho \mu}+\langle \pi(\vep_i), \mu \rangle {\rho \lambda})+\langle \rho, \pi(\vep_i) \rangle {\lambda \mu} \\
&-\frac{1}{2}(\langle \rho, \lambda \rangle {\pi(\vep_i) \mu}+\langle \rho, \mu \rangle {\pi(\vep_i) \lambda}). \\
\end{align*}
Now, the identities
\[
 \langle \pi(\vep_i), \nu \rangle \rho - \langle  \rho,  \nu \rangle \pi(\vep_i) = -\langle \rho, \pi(\vep_i) \rangle \nu,
\qquad \nu \in \{\lambda, \mu \},
\]
imply that $\frac{1}{2}[{\pi(\vep_i) \rho}, {\lambda \mu}]-[P_{\vep_i}, {\lambda \mu}]=0$.
\end{proof}

 By this Lemma, the semi-direct product $\lcP_1 \ltimes S^2 \C^2$ splits into a direct sum of ideals, with the isomorphism map
$$ \lcP_1 \ltimes S^2 \C^2 \rightarrow \lcP_1 \oplus S^2 \C^2,$$
given by $(x,y) \mapsto (x, \varphi(x) + y)$, with $x \in \lcP_1$ and 
$y \in S^2 \C^2$.

Applying Lemma \ref{P1}, we see that for $x \in \Ker \tau$, elements 
$x - \varphi(x) \in \lcP$ form an abelian ideal and act trivially on every irreducible finite-dimensional $\lcP$-module. Then irreducible finite-dimensional representations of $\lcP$ are described by the irreducible finite-dimensional representations of the reductive Lie algebra $\C^2 \oplus S^2 \C^2$,
where the first direct summand is an abelian Lie algebra. 

 Fix a vector $\rho^\dagger \in \C^2$ such that $\langle \rho, \rho^\dagger \rangle = 1$.

The following theorem gives a description of finite-dimensional irreducible representations of $\cP$:

\begin{thm} \label{class-P}
Every irreducible finite-dimensional module for the Lie algebra $\cP$ is given 
by an irreducible finite-dimensional module $U$ for the Lie algebra 
$S^2 \C^2 \cong \fsl_2$, and two scalars $K_0, K_1 \in \C$. The action of $\cP$ on $U$ is the following:
\begin{align} 
P_0 &\mapsto K_0 \id_U, \label{P-action1} \\ 
P_{\vep_i} &\mapsto \langle \vep_i, K_0 \rho^\dagger + K_1 \rho \rangle \id_U
+ \frac{1}{2} \pi(\vep_i) \rho, \label{P-action2} \\ 
P_{\vep_i + \vep_j} &\mapsto \pi(\vep_i) \pi(\vep_j), \label{P-action3} \\
P_K &\mapsto 0, \text{\ for \ } |K| \geq 3. \label{P-action4}  
\end{align}
\end{thm}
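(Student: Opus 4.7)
My plan is to reduce the classification to irreducible representations of the reductive quotient $\C^2 \oplus \fsl_2$ through a sequence of ideal-killing steps, then identify the two free parameters $K_0, K_1$ concretely in terms of the symplectic pairing on $\C^2$.

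First, I would invoke Lemma \ref{PK2} to factor every irreducible finite-dimensional $\cP$-module through the finite-dimensional quotient $\lcP$, which immediately yields \eqref{P-action4}. Next, Lemma \ref{commutant} places $\lcP_2$ inside $[\lcP, \lcP]$, and Lemma \ref{P2} identifies $\eta^{-1}(\Ker \pi_*) \subset \lcP_2$ as a solvable ideal of $\lcP$. Combined, this ideal lies in $[\lcP,\lcP] \cap \Rad\, \lcP$, so by Lemma \ref{lemma_FH} it acts trivially on $U$. The representation therefore factors through $\lcP_1 \ltimes \fsl_2$ under the identification $S^2\C^2 \cong \fsl_2$.

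Next I would invoke Lemma \ref{phi-map}, which shows that $(x, y) \mapsto (x, \varphi(x)+y)$ splits this semidirect product into a direct sum of ideals $\lcP_1 \oplus \fsl_2$. For a direct sum of Lie algebras, an irreducible finite-dimensional module is an (outer) tensor product of irreducibles of each factor. Since $\lcP_1$ is solvable (Lemma \ref{P1}), Lie's theorem forces its irreducibles to be one-dimensional characters, and by Lemma \ref{P1} these factor through the two-dimensional abelian quotient $V \cong \C^2$. The $\fsl_2$-factor contributes an arbitrary irreducible $\fsl_2$-module $U$, producing exactly the data $(K_0, K_1, U)$ in the statement.

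Finally I would trace the splitting to extract the explicit formulas. Every character of $\C^2$ has the form $v \mapsto \langle v, w\rangle$ for a unique $w \in \C^2$, and writing $w = K_0\rho^\dagger + K_1\rho$ in the basis $\{\rho^\dagger, \rho\}$ produces the two scalars. Then $\chi(\tau(P_0)) = \langle \rho, w\rangle = K_0$ yields \eqref{P-action1}, while the element $(P_{\vep_i}, 0) \in \lcP_1 \ltimes \fsl_2$ maps to $(P_{\vep_i}, \tfrac{1}{2}\pi(\vep_i)\rho)$ in the direct sum, so summing the character with the $\fsl_2$-action gives \eqref{P-action2}. The element $P_{\vep_i+\vep_j}$ lies entirely in $\lcP_2$ and projects via $\eta$ followed by $\pi_*$ to $\pi(\vep_i)\pi(\vep_j) \in \fsl_2$, giving \eqref{P-action3}. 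I expect the main subtlety to lie not in these reductions but in the converse direction: checking that for any choice of $(K_0, K_1, U)$ the formulas \eqref{P-action1}--\eqref{P-action4} actually define a representation of $\cP$, which amounts to verifying the defining relations \eqref{comm_PK-PS-1}--\eqref{comm_PK-PS-3} on the constructed module by reversing each step above.
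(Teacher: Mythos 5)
Your proof follows the paper's route essentially verbatim: factor through $\lcP$ via Lemma \ref{PK2}, kill $\Ker\pi_*$ using Lemmas \ref{commutant}, \ref{P2} and \ref{lemma_FH}, split the semidirect product with Lemma \ref{phi-map}, and extract $K_0, K_1$ from the resulting abelian factor. The only cosmetic difference is that you parametrize characters of $\C^2$ via the symplectic identification $v \mapsto \langle v, w\rangle$, while the paper writes out the decomposition $\pi(\vep_i) = \langle\vep_i,\rho^\dagger\rangle\rho - \langle\vep_i,\rho\rangle\rho^\dagger$ explicitly; these are equivalent. One small correction to your closing remark: the converse direction is not a subtlety and needs no separate verification of the relations \eqref{comm_PK-PS-1}--\eqref{comm_PK-PS-3}, because the chain of lemmas already produces a Lie algebra homomorphism $\gamma: \cP \to \C^2 \oplus S^2\C^2$ (spelled out in the paper's proof), so pulling back any irreducible module of the target along $\gamma$ automatically yields a representation of $\cP$ given by \eqref{P-action1}--\eqref{P-action4}, and irreducibility follows since $\gamma$ is surjective.
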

\begin{proof}
By Lemmas \ref{commutant}-\ref{phi-map}, finite-dimensional irreducible representations of $\cP$ 
are pull-backs of simple modules for the reductive Lie algebra $\C^2 \oplus S^2 \C^2$ via
the map $\gamma: \cP \rightarrow \C^2 \oplus S^2 \C^2$, given by
\begin{align*}
\gamma (P_0) &= \rho \in \C^2, \\
\gamma (P_{\vep_i}) &= \pi(\vep_i) + \frac{1}{2} \pi(\vep_i) \rho, \\
\gamma (P_{\vep_i + \vep_j}) &= \pi(\vep_i) \pi(\vep_j) \in S^2 \C^2, \\
\gamma (P_K) &= 0, \text{\ for \ } |K| \geq 3.
\end{align*}
Every finite-dimensional simple module for the Lie algebra $\C^2 \oplus S^2 \C^2$ is a simple
$S^2 \C^2 \cong \fsl_2$-module $U$, on which elements of $\C^2$ act as scalars. Assuming that
$\rho \in \C^2$ acts as $K_0 \id_U$, and $\rho^\dagger \in \C^2$ acts as $-K_1 \id_U$ and 
taking into account the decomposition
$$ \pi(\vep_i) = \langle \vep_i, \rho^\dagger \rangle \rho - \langle \vep_i, \rho \rangle \rho^\dagger,$$
we obtain the claim of the theorem.
\end{proof}

Using Theorem \ref{class-P} we can prove that the class of modules introduced in section 1.3 exhausts
all simple $\Lambda$-graded $\cA\cV_\pi$-modules of $\cA_\pi$-finite rank.

\begin{thm} \label{class-AV}
Assume that $\pi: \Lambda \rightarrow \C^2$ satisfies the condition $(\cC)$.
Then every simple $\Lambda$-graded $\cA\cV_\pi$-module $T$ of $\cA_\pi$-finite rank is isomorphic to $\cM^n (\Gamma)$ for 
some $n \geq 0$ and $\Gamma \in \C^2 / \pi(\Lambda)$.
\end{thm}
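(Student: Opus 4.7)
The plan is to combine Theorem \ref{equiv-cat}, which identifies cuspidal $\cA\cV_\pi$-modules supported on a coset $\beta+\pi(\Lambda)$ with finite-dimensional $\cP$-modules, together with Theorem \ref{class-P}, which classifies the simple finite-dimensional $\cP$-modules by triples $(U_0, K_0, K_1)$ with $U_0 \cong S^n V$ a simple $\fsl_2 \cong S^2\C^2$-module and $K_0, K_1 \in \C$. Since $T$ is simple as a $\Lambda$-graded module, its support is a single coset $\beta_0 + \pi(\Lambda)$, so by Theorem \ref{equiv-cat} the module $T$ is determined up to isomorphism by the simple $\cP$-module structure on the homogeneous component $U = T_{\beta_0}$. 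By Theorem \ref{class-P}, this data amounts to a non-negative integer $n$ together with two scalars $K_0, K_1 \in \C$.

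It therefore suffices to exhibit, for each triple $(n, K_0, K_1)$, a coset $\Gamma \in \C^2/\pi(\Lambda)$ such that the $\cP$-module attached to $\cM^n(\Gamma)$ coincides with this data. The key computation is to evaluate $D(\lambda) = L^\cA_{-\lambda - \rho} \circ L^\cV_\lambda$ on the homogeneous component $L_\beta \otimes S^n V$ of $\cM^n(\Gamma)$ for a chosen representative $\beta$ of $\Gamma$. Using the twisted embedding $j(L^\cV_\lambda) = L^\cV_\lambda + c(L^\cV_\lambda)$ from Lemma \ref{lemma_MC} with $c(L^\cV_\lambda) = \tfrac{1}{2}\,L^\cA_{\lambda - \rho} \otimes \lambda(\lambda + \rho)$, a direct calculation shows that $D(\lambda)(L_\beta \otimes x) = L_\beta \otimes \bigl(\langle \lambda + \rho, \beta + \rho \rangle\, x + \tfrac{1}{2}\,\lambda(\lambda + \rho) \cdot x\bigr)$ for $x \in S^n V$. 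Expanding in $\lambda = \sum_i \lambda_i \pi(\vep_i)$ and reading off the coefficients via \eqref{D-expansion} yields precisely the formulas \eqref{P-action1}--\eqref{P-action4} with the identifications $K_0 = \langle \rho, \beta \rangle$ and $\beta + \rho = K_0 \rho^\dagger + K_1 \rho$.

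Since $\{\rho, \rho^\dagger\}$ is a basis of $\C^2$, the assignment $\beta \mapsto (K_0, K_1)$ is a bijection $\C^2 \to \C^2$, so given the triple $(n, K_0, K_1)$ classifying $T$, choosing $\beta := K_0 \rho^\dagger + (K_1 - 1)\rho$ and $\Gamma := \beta + \pi(\Lambda)$ makes the $\cP$-module attached to $\cM^n(\Gamma)$ agree with that of $T$, and Theorem \ref{equiv-cat} then delivers $T \cong \cM^n(\Gamma)$. A small additional check is that $\cM^n(\Gamma)$ depends only on $\Gamma$ and not on the chosen representative $\beta$: a shift $\beta \mapsto \beta + \alpha$ with $\alpha \in \pi(\Lambda)$ alters $(K_0, K_1)$ by $(\langle \rho, \alpha\rangle, -\langle \rho^\dagger, \alpha\rangle)$ but gives the same abstract $\cA\cV_\pi$-module, as is clear from the construction in Section \ref{sect_mult+1}. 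The main obstacle is the careful bookkeeping in the polynomial expansion of $D(\lambda)$ on $\cM^n(\Gamma)$; once that expansion is in place, the theorem reduces to the equivalence of categories of Theorem \ref{equiv-cat}.
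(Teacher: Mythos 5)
Your proposal is correct and follows essentially the same route as the paper: both pass through Theorem \ref{equiv-cat} and Theorem \ref{class-P}, then match the resulting $\cP$-module data with that of $\cM^n(\Gamma)$; the paper does this by reconstructing $T = \cA_\pi \otimes U$ and comparing the explicit action formula, while you compute $D(\lambda)$ directly on a homogeneous component of $\cM^n(\Gamma)$ and match parameters, which is the same matching seen from the other side. Incidentally, your bookkeeping $\beta + \rho = K_0\rho^\dagger + K_1\rho$ (so $\beta = K_0\rho^\dagger + (K_1-1)\rho$) appears to be the correct normalization; the paper's printed $\beta = K_0\rho^\dagger + K_1\rho$ seems to drop a shift by $\rho$ coming from the $L_\mu^\cA = L_{\mu-\rho}$ convention in $\cA_\pi$, though this has no bearing on the statement of the theorem since only the coset class matters.
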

\begin{proof}
Let $U$ be a homogeneous component of $T$, $U = T_{a}$ for some $a \in \C$. Then $U$ is a finite-dimensional module
for the Lie algebra $\cD$ with $D(0)$ acting by $a \id_U$. Since $T$ is a simple $\cA\cV_\pi$-module, it follows from Lemma
\ref{recover} that $U$ is an irreducible representation for $\cD$. By Lemma \ref{lemma_D-poly}, $U$ admits the action of the 
Lie algebra $\cP$.
% with $P_0$ also acting as $a \id_U$. 
Since the action of $\cD$ can be expressed in terms of the action of
$\cP$, $U$ is irreducible as a $\cP$-module. Using Theorem \ref{class-P}, we conclude that $U$ is an irreducible
$S^2 \C^2 \cong \fsl_2$-module with the action of $\cP$ given by \eqref{P-action1} - \eqref{P-action4}. 
% with $K_0 = a$. 
Combining Lemma \ref{recover}
with \eqref{D-expansion} and \eqref{P-action1} - \eqref{P-action4}, we recover the $\cA\cV_\pi$-module structure on $T = \cA_\pi \otimes U$:
\begin{align*}
L_\lambda^\cV &(L_\mu^\cA \otimes u) 
= \langle \lambda + \rho, \mu + \rho \rangle L_{\lambda + \mu}^\cA \otimes u \\
+& L_{\lambda + \mu}^\cA \otimes K_0 u
+ \sum_{i=1}^N \lambda_i \langle \vep_i, K_0 \rho^\dagger + K_1 \rho \rangle L_{\lambda + \mu}^\cA \otimes u \\
+& \frac{1}{2}  L_{\lambda + \mu}^\cA \otimes \left(
\sum_{i=1}^N \lambda_i (\pi(\vep_i) \rho) u
+ \sum_{i \neq j} \lambda_i \lambda_j (\pi(\vep_i) \pi(\vep_j )) u
+ \sum_{i=1}^N \lambda_i^2 \pi(\vep_i)^2 u \right) \\
=& \langle \lambda + \rho, \mu + \rho \rangle L_{\lambda + \mu}^\cA \otimes u 
+ \langle \lambda + \rho,  K_0 \rho^\dagger + K_1 \rho \rangle L_{\lambda + \mu}^\cA \otimes u \\
+&  \frac{1}{2} L_{\lambda + \mu}^\cA \otimes \lambda (\lambda + \rho) u .
\end{align*}
This shows that $T$ is isomorphic to $\cM^n (\beta + \Lambda)$ with $\beta = K_0 \rho^\dagger + K_1 \rho+\pi(\Lambda)$ and $n = \dim U - 1$.
\end{proof}
\begin{cor} \label{M-irr}
Assume that $\pi: \Lambda \rightarrow \C^2$ satisfies the condition $(\cC)$.
Then $\cM^n (\Gamma)$ is irreducible as an $\cA\cV_\pi$-module for every 
$\Gamma \in \C^2 / \pi(\Lambda)$ and  $n \geq 0$.
\end{cor}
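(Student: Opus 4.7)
The plan is to deduce this from the equivalence of categories in Theorem \ref{equiv-cat} combined with the description of irreducible finite-dimensional $\cP$-modules in Theorem \ref{class-P}. Fix $\beta \in \Gamma$ and let $U := \cM^n(\Gamma)_\beta \cong L_\beta^\cA \otimes S^n V$, which is an $(n+1)$-dimensional space. Under the equivalence of Theorem \ref{equiv-cat}, the cuspidal $\cA\cV_\pi$-module $\cM^n(\Gamma) \cong \cA_\pi \otimes U$ corresponds to $U$ as a $\cP$-module, so it suffices to show that $U$ is an irreducible $\cP$-module.

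First, I would make explicit the $\cP$-module structure carried by $U$. Starting from the cocycle $c(L_\lambda^\cV) = \tfrac{1}{2}\, L_{\lambda-\rho}^\cA \otimes \lambda(\lambda+\rho)$ defining the action of $W_\pi$ on $\cM^n(\Gamma)$, and from the definition $D(\lambda) = L_{-\lambda-\rho}^\cA \circ L_\lambda^\cV$, a direct calculation shows that the coefficients $P_K$ in the expansion \eqref{D-expansion} act on $S^n V$ exactly as prescribed by Theorem \ref{class-P}: the quadratic coefficients $P_{\vep_i + \vep_j}$ act as $\pi(\vep_i)\pi(\vep_j) \in S^2 \C^2$ via the Poisson bracket on $S^\bullet V$, the higher $P_K$ with $|K| \geq 3$ act trivially, and the linear and constant coefficients $P_0$, $P_{\vep_i}$ act by scalars determined by $\beta$ (namely, $K_0 = \langle \rho, \beta \rangle$ and $K_1$ read off from the projection of $\beta$ onto $\rho$).

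Second, I would observe that $U$ is simple as a $\cP$-module. Theorem \ref{class-P} and the analysis preceding it show that finite-dimensional $\cP$-modules factor through $\C^2 \oplus S^2\C^2$ with $\C^2$ acting by scalars, so irreducibility of the $\cP$-module is equivalent to irreducibility of the underlying $S^2 \C^2 \cong \fsl_2$-module. The standard fact that $S^n V$ is the $(n+1)$-dimensional irreducible $\fsl_2$-module for every $n \geq 0$ finishes this step. Applying the equivalence in Theorem \ref{equiv-cat}, which preserves simple objects, yields that $\cM^n(\Gamma)$ is irreducible as an $\cA\cV_\pi$-module.

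The only real technical point is the explicit matching in the first step between the $D(\lambda)$ coming from the formula for $j: \cV \rightarrow \cS^{\ext}$ and the general form listed in Theorem \ref{class-P}; once this identification is made, the result is immediate from the irreducibility of $S^n V$ as an $\fsl_2$-module. Note that the corollary is fully consistent with the remark that $\cM^1(\Gamma)$ is reducible as a $W_\pi$-module, since the $W_\pi$-submodules appearing there are not stable under $\cA_\pi$.
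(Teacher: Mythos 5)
Your proposal is correct and follows essentially the same route as the paper: both reduce irreducibility of $\cM^n(\Gamma)$ as an $\cA\cV_\pi$-module to the observation that a $\cP$-submodule of the graded component $S^n V$ must be closed under the quadratic coefficients $P_{\vep_i+\vep_j}$, which act as $\pi(\vep_i)\pi(\vep_j) \in S^2\C^2 \cong \fsl_2$, and then invoke $\fsl_2$-irreducibility of $S^n V$. The paper phrases this directly in terms of $\Lambda$-graded $\cA$-submodules of $\cM^n(\Gamma)$ rather than explicitly routing through Theorem \ref{equiv-cat} and the full statement of Theorem \ref{class-P}, but the substance is identical; the only caveat is that your intermediate assertion that ``finite-dimensional $\cP$-modules factor through $\C^2 \oplus S^2\C^2$ with $\C^2$ acting by scalars'' holds for the module at hand by inspection of the explicit formulas, not for arbitrary finite-dimensional $\cP$-modules, and should be stated for this particular module rather than in general.
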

\begin{proof}
Every $\cV$-submodule of $\cM^n (\Gamma)$ is $\Lambda$-graded, and 
every $\Lambda$-graded $\cA$-submodule of $\cM^n (\Gamma)$ is of the form $\cS_\Gamma \otimes U^\prime$ for some subspace 
$U^\prime$ in $S^n \C^2$. But then $U^\prime$ is a $\cP$-submodule in $S^n \C^2$ and hence is also an $\fsl_2$-submodule.
However $S^n \C^2$ is an irreducible $\fsl_2$-module. Hence every $\cA\cV_\pi$-submodule in $\cM^n (\Gamma)$ is trivial.
\end{proof}

\section{Proof of the Main Theorem}\label{sect:Proof-main-thm}
In this section, we prove Lemma \ref{lemma_M^n} and the Main Theorem \ref{thm_main}.
% on irreducibility of $W_\pi$-modules $\cM^n(\beta)$ for $n\geq 2$ (cf. Lemma \ref{lemma_M^n}). 
We shall notice that the hypothesis $\mathrm{rank}\, \Lambda>1$ is essential. 

In this section, we identify the lattice $\Lambda$ with its image $\pi(\Lambda)$ in $\C^2$.

\subsection{Isomorphism $S^2 \C^2 \cong \fsl_2$ }
Let $\xi, \eta \in \Lambda \setminus 0 \subset \C^2$ be two linearly independent elements.
Set 
\[ e=-\frac{1}{2\langle \xi, \eta\rangle}\xi^2, \qquad f=\frac{1}{2\langle \xi, \eta\rangle} \eta^2, \qquad h=-\frac{1}{\langle \xi, \eta \rangle} \xi \eta.
\]
The triplet $\{e,f,h\}$ forms a standard $\fsl_2$-triplet, i.e., they satisfy 

\centerline{$[h,e]=2e, \, [h,f]=-2f$ and $[e,f]=h$.} 

\subsection{Irreducibility of $\cM^n(\Gamma)$ with $n\geq 2$}
Every $W_\pi$-submodule $N$ of $\cM^n(\Gamma)$ is $\Lambda$-graded, i.e., 
% Assume that there exists a $\Lambda$-graded $W_\pi$-submodule $N$ of $\cM^n(\beta)$, i.e., 
$N=\bigoplus_{\mu \in \Gamma} N_\mu$ where $N_\mu=N \cap \cM^n_\mu$. Fix $\mu \in \Gamma$ and let $N'_\mu$ be the subspace of $S^nV$ such that $N_\mu=\C L_{\mu-\rho}^{\cA} \otimes N'_\mu$. For any $\lambda \in \Lambda$, the element $w=L_{\lambda-\gamma}^{\cV}.(L_\gamma^{\cV}. (L_{\mu-\rho}^{\cA}\otimes n'))$ with $n' \in N'_\mu$ is an element of $N'_{\lambda+\mu}$ for any $\gamma \in \Lambda$, and $w$ is a degree 4 polynomial in $\gamma$. Varying $\gamma \in \Lambda$, we see that each coefficient of monomials in $\gamma$ is an element of $N'_{\lambda+\mu}$. In particular, the quartic component of $w$ in $\gamma$ sits in $N'_{\lambda+\mu}$ and is given by
\[ \frac{1}{4}L_{\lambda+\mu-\rho}^{\cA} \otimes \{ \gamma^2, \{\gamma^2, n'\}\}. \]
Taking $\gamma= \xi, \eta, \xi\pm \eta$, we see that
\[  \{\xi^2, \{\eta^2, n'\}\}+\{\eta^2, \{\xi^2, n'\}\} +4
\{\xi\eta, \{\xi\eta, n'\}\}
\]
are elements of $N'_{\lambda+\mu}$. In addition, these two elements corresponds to 
$-4\langle \xi, \eta \rangle^2(ef+fe-h^2)$ via the isomorphism given in the previous subsection. 
Hence $N'$ is closed under the operators $L_{\mu-\rho}^{\cA}\otimes (ef+fe-h^2)$ for all $\mu \in \Lambda$.
By the assumption $n\geq 2$, the element $ef+fe-h^2$ acts on $S^nV$ as an invertible semi-simple element. Thus $N'=N'_\mu$ does not depend on the choice of $\mu \in \Gamma$, namely, $N=\cS_\Gamma \otimes N' \subset \cM^n(\Gamma)$. 
% As $S^n V$ itself is irreducible over $\fsl_2$, this means $N'=S^nV$ and thus $N=\cM^n(\beta)$, i.e., $\cM^n(\beta)$ is irreducible $W_\pi$-module for $n \geq 2$. 
This shows that $N$ is not just a $W_\pi$-submodule of $\cM^n(\Gamma)$, but in fact it is an $\cA \cV_\pi$-submodule. But by Corollary \ref{M-irr},  $\cM^n(\Gamma)$
is irreducible as an $\cA \cV_\pi$-module.

% If we have two distinct cosets $\Gamma, \Gamma^\prime \in \C^2 / \pi(\Lambda)$,
% ... and hence these modules are not isomorphic. 
This completes the proof of the first half of Lemma  \ref{lemma_M^n}.

% The second half of Lemma  \ref{lemma_M^n} can be shown by direct computation.

Let us sketch the proof of the second claim of Lemma \ref{lemma_M^n}. We need to show that $\cM^n(\Gamma)$ and $\cM^k(\Gamma^\prime)$ are non-isomorphic as 
$W_\pi$-modules unless $n=k$ and $\Gamma = \Gamma^\prime$. 

Comparing dimensions of homogeneous components, we see immediately that 
$\cM^n(\Gamma) \cong \cM^k(\Gamma^\prime)$ implies $n=k$. Since 
$\Gamma$ and $\Gamma^\prime$ are cosets of $\pi(\Lambda)$, it is sufficient to show that $\Gamma \cap \Gamma^\prime \neq \emptyset$.

Let us fix non-zero $\xi \in \pi(\Lambda)$ and $\mu \in \Gamma$. We may choose $\mu$
in such a way that 
$$\left| \frac{\langle \mu, \xi \rangle}{\langle \rho, \xi \rangle} \right| > \frac{n}{2} +2.$$
%We need to show that we can recover $\mu$ as a vector in $\C^2$ from the action of 
%$W_\pi$ on $$\cM^n(\Gamma)$.
Suppose a homogeneous component $\cM^n(\Gamma)_\mu$  is mapped to 
$\cM^n(\Gamma^\prime)_{\mu^\prime}$ under the isomorphism. We want to show that $\mu = \mu^\prime \in \C^2$. Comparing the actions of $L_0$ on these spaces,
we see that 
\begin{equation}
\label{mumu}
\langle \mu, \rho \rangle =  \langle \mu^\prime, \rho \rangle.
\end{equation}

Next consider a $\bW$-subalgebra 
$$\mathop\oplus_{m \in \Z} \C L_{m \xi} \subset W_\pi .$$
Let us study the action of this subalgebra on the subspace 
$$V_{\mu,\xi} = \mathop\oplus_{k\in\Z} L_{\mu + k \xi} \otimes  S^n \C^2 \subset 
\cM^n(\Gamma) .$$
This subsace is a cuspidal $\bW$-module and has a composition series of tensor $\bW$-modules. Let us recall that a tensor $\bW$-module $T(\alpha, \beta)$, $\alpha, \beta \in \C$,  is a module with basis $\{ v_k \, | \, k\in\Z \}$ and the action
$$L_m v_k = (k + \alpha + m \beta) v_{k+m} .$$
It is well-known that for $\beta \neq 0, 1$ tensor modules $T(\alpha, \beta)$ are 
irreducible and $T(\alpha, \beta) \cong T(\alpha, \beta^\prime)$ implies $\beta = \beta^\prime$.

Elements $L_{m\xi}$ act in the following way:
\begin{align*}
L_{m\xi} (L_{\mu+k\xi} \otimes u) 
= &\langle m\xi +\rho, \mu + k\xi + \rho \rangle   L_{\mu+(k+m)\xi} \otimes u \\
&+ \frac{1}{2}  L_{\mu+(k+m)\xi} \otimes \left\{ m^2 \xi^2 + m \xi \rho, u \right\} .
\end{align*}
Note that $\xi^2$ is a nilpotent element of $S^2 \C^2$ and $\xi \rho$ is a semisimple
element. From this it is easy to see that $V_{\mu,\xi}$ has a unique tensor submodule 
$$\mathop\oplus_{k\in\Z} \C  L_{\mu + k \xi} \otimes \xi^n$$
with the action
\begin{align*}
&L_{m\xi} (L_{\mu+k\xi} \otimes \xi^n) = \\
& \ \ \ \left( k \langle \rho, \xi \rangle
- m  \langle \rho, \xi \rangle
+ m \langle \xi, \mu \rangle
+  \langle \rho, \mu \rangle
+ \frac{mn}{2}  \langle \rho, \xi \rangle \right)
 L_{\mu+(k+m)\xi} \otimes  \xi^n .
\end{align*}
Since the isomorphism with $\bW$ is given by $L_{m \xi} \mapsto 
 \langle \rho, \xi \rangle L_m$, we see that this is a tensor module $T(\alpha, \beta)$ with 
$$\alpha = \frac{ \langle \rho, \mu \rangle}{ \langle \rho, \xi \rangle}
\text{ \ and \ }
\beta = \frac{n}{2} +  \frac{ \langle \xi, \mu \rangle}{ \langle \rho, \xi \rangle} - 1.$$
From our assumptions on $\mu, \xi$ we see that $|\beta| > 1$. Since $\beta$ is 
uniquely determined by $V_{\mu,\xi}$, we conclude that 
$\langle \mu, \xi \rangle =  \langle \mu^\prime, \xi \rangle$, which together with (\ref{mumu}) implies that 
$\mu = \mu^\prime$ and $\Gamma = \Gamma^\prime$.

\subsection{Proof of the Main Theorem}
Let $M$ be a non-trivial simple $\Lambda$-graded cuspidal $W_\pi$-module. As we have seen in Section 2.3, there exists a simple cuspidal $\cA \cV_\pi$-module $T$
with a surjective homomorphism $T \twoheadrightarrow M$. By Theorem \ref{class-AV}, $T$ is isomorphic to $\cM^n(\Gamma)$ for some coset $\Gamma \in \C^2 / \Lambda$ and $n \geq 0$. If $n \geq 2$ then by Lemma \ref{lemma_M^n}, $T$ is irreducible as a $W_\pi$-module, and $M$ is isomorphic to $\cM^n(\Gamma)$.
If $n = 1$, every simple quotient of $\cM^1(\Gamma)$ is also a quotient of $\cM^0(-\frac{3}{2} \rho + \Gamma)$, and hence the case $n=1$ may be excluded from our classification. Finally, if $n = 0$, the module $\cM^0(\Gamma)$ is isomorphic to $\cS_\Gamma$, and the structure of these modules is described in Lemma 
\ref{lemma_irr-tensor}.

% \appendix

\end{document}